\newtheoremstyle{myremark}     {10pt}{10pt}{}{}{\bfseries}{.}{.5em}{}
\newtheorem{thm}{Theorem}[section]
\newtheorem{cor}[thm]{Corollary}
\newtheorem{lem}[thm]{Lemma}
\newtheorem{prop}[thm]{Proposition}
\theoremstyle{definition}
\newtheorem{defn}[thm]{Definition}
\theoremstyle{myremark}
\newtheorem{rem}[thm]{Remark}
\numberwithin{equation}{section}
\def\A{\mathbb A}
\def\N{\mathbb N}
\newcommand{\CB}{\mathcal{B}}
\newcommand{\CC}{\mathcal{C}}
\newcommand{\CM}{\mathcal{M}}
\newcommand{\CN}{\mathcal{N}}
\newcommand{\CP}{\mathcal{P}}
\newcommand{\CW}{\mathcal{W}}
\newcommand{\eps}{\varepsilon}
\newcommand{\ups}{\upsilon}
\newcommand{\abs}[1]{\left\vert#1\right\vert}
\newcommand{\norm}[1]{\left\Vert#1\right\Vert}
\newcommand\inner[2]{\left\langle #1, #2 \right\rangle}
\begin{document}
	
	\title[Maximal Ergodic Inequalities]{ Non-Commutative Maximal Inequalities  for State Preserving Actions of amenable groups }
	
	\author[P. Bikram]{Panchugopal Bikram}
	\address{School of Mathematical Sciences,
National Institute of Science Education and Research,  Bhubaneswar, An OCC of Homi Bhabha National Institute,  Jatni- 752050, India}
	\email{bikram@niser.ac.in}

\author[Hariharan G]{Hariharan G}
	\address{School of Mathematical Sciences,
National Institute of Science Education and Research,  Bhubaneswar, An OCC of Homi Bhabha National Institute,  Jatni- 752050, India}
	\email{hariharan.g@niser.ac.in}

\author[S. Kundu]{Sudipta Kundu}
	\address{School of Mathematical Sciences,
National Institute of Science Education and Research,  Bhubaneswar, An OCC of Homi Bhabha National Institute,  Jatni- 752050, India}
	\email{sudipta.kundu@niser.ac.in}
    

	\author[D. Saha]{Diptesh Saha}
	\address{Institute of Mathematics of the Polish Academy of Sciences,  ul. Sniadeckich 8, 
		00–656 Warszawa, Poland}
	\email{dptshs@gmail.com}

	

	\keywords{ Maximal ergodic theorem, Pointwise ergodic theorem, von Neumann algebras}
	\subjclass[2020]{Primary 46L53, 46L55; Secondary 37A55, 46L40.}
	
	

	\begin{abstract}
		In this article, we establish maximal inequalities and deduce ergodic theorems for state-preserving actions of amenable, locally compact, second-countable groups on tracial non-commutative $L^1$-spaces. As a further consequence, in combination with the Neveu decomposition, we obtain a stochastic ergodic theorem for amenable group actions.

	\end{abstract}

	\maketitle 

	\section{Introduction}
  
In this work, we investigate non-commutative maximal ergodic inequalities and pointwise ergodic theorems for actions of amenable groups that preserve a given state. The study of ergodic theorems in classical measure-theoretic settings originates in the 1930s with the seminal contributions of Birkhoff and von Neumann. Since then, the theory has undergone extensive development. The non-commutative counterparts of these results were initiated in the pioneering work of Lance (\cite{Lance1976}), who established a pointwise ergodic theorem for averages associated with a single state-preserving automorphism of a von Neumann algebra equipped with a faithful normal state. Subsequent generalizations were provided by Kümmerer, Conze, Dang-Ngoc, and others (cf. \cite{Kuemmerer1978}, \cite{Conze1978}, and the references therein).

Over time, it has become well understood that the canonical route to establishing an ergodic theorem proceeds through the derivation of an appropriate maximal inequality together with a proper Banach principle. The non-commutative framework, however, renders the maximal inequality step particularly intricate and technically demanding.

In two foundational papers \cite{Yeadon1977, Yeadon1980}, Yeadon developed maximal ergodic theorems for actions of positive, sub-tracial, sub-unital maps operating on the preduals of semifinite von Neumann algebras and, as a consequence, established associated pointwise ergodic theorems. This line of inquiry was later advanced by Junge and Xu \cite{Junge2007}, who extended Yeadon’s results by proving non-commutative Dunford–Schwartz maximal inequalities in $L^p$-spaces, employing sophisticated interpolation methods.

Parallel to these developments, substantial progress has been made on ergodic theorems for group actions on classical measure spaces (cf. \cite{anantharaman2010ergodic, calderon1953general}). In particular, relevance to the present discussion is the work of Lindenstrauss \cite{Lindenstrauss2001}, which established pointwise ergodic theorems for tempered Følner sequences, thereby extending Birkhoff’s classical theorem to actions of second-countable amenable groups.  

A significant advancement in the non-commutative theory was made by Hong, Liao, and Wang in \cite{Hong2021}. In their work, the authors established a collection of maximal inequalities in non-commutative \(L^{p}\)-spaces for actions of locally compact group metric measure spaces satisfying a doubling condition, as well as for compactly generated groups exhibiting polynomial growth. These inequalities subsequently enabled them to derive the corresponding pointwise ergodic theorems. The scope of these results was later broadened to encompass general amenable groups in \cite{cadilhac2022noncommutative} and very recently for unimodular, amenable groups in \cite{chakraborty2025ergodic}.


It is worth emphasizing that both of the aforementioned results deal exclusively with actions that preserve a faithful, normal, semifinite trace. In contrast, Junge and Xu \cite{Junge2007} examined maximal inequalities for ergodic averages in Haagerup’s \(L^{p}\)-spaces (\(1 < p < \infty\)) associated with state-preserving actions of the discrete group \(\mathbb{Z}\) and of the semigroup \(\mathbb{R}_{+}\). The corresponding endpoint case \(p = 1\) for such state-preserving actions remained open until the more recent developments in \cite{Bik-Dip-neveu} and \cite{pg-ds-ergo-semi}. \\

In \cite{Bik-Dip-neveu}, the authors established maximal inequalities and pointwise ergodic theorems in non-commutative \(L^{1}\)-spaces for actions of locally compact groups of polynomial growth that preserve a faithful, normal state on a von Neumann algebra. Their approach relies primarily on an inequality from \cite[Proposition 4.8]{Hong2021}, which allows the relevant ergodic averages over the group to be dominated by an ergodic average of an associated Markov operator.

In \cite{pg-ds-ergo-semi}, a comparable maximal inequality was derived for actions of \(\mathbb{Z}^{d}_{+}\) and \(\mathbb{R}^{d}_{+}\), employing a similar strategy. Additionally, in \cite{Bik-Dip-Amenable}, the author proved a maximal inequality for actions of group metric measure spaces satisfying the doubling condition, making use of suitable analogues of Calderón’s transference principle together with maximal inequalities for non-commutative martingales.

In the present article, we establish a similar maximal inequality for the state preserving action of an amenable group as in \cite{Bik-Dip-Amenable}, by exploiting the maximal inequality obtained by  Cadilhac and Wang, in \cite{cadilhac2022noncommutative}, which was obtained in tracial setup.\\

Let \( G \) be an amenable, locally compact, second-countable group equipped with a right-invariant Haar measure \( \mu \) with an admissible F\o lner sequence $(F_n)$. 

Let \( \mathcal{M} \) be a finite von Neumann algebra with a faithful, normal tracial state \( \tau \), and let \( (\mathcal{M}, G, \alpha, \rho) \) be a kernel (cf.  Definition \ref{kernel defn}). One of the main results of this article establishes the following maximal inequality and ergodic theorem.

\begin{thm}\label{main thm}
    Let \( \phi \in \mathcal{M}_* \),  then  consider the averages \( (A_k(\phi))_{k \in \mathbb{N}} \), where  
    \begin{align*}
        A_k(\phi)(x) : = \frac{1}{\mu(F_k)} \int_{F_k} \phi(\alpha_{ s^{-1}}(x)) \, d\mu(s), \quad x \in \mathcal{M}.
    \end{align*}
    Then the following hold:  \\
    \begin{enumerate}\setlength\itemsep{.7em}
      
\item Furthermore, for all $j \in \N$, let $\phi_j \in \CM_{* s}$ satisfy $\norm{\phi_j} \leq \frac{1}{4^{s_j}}$. Fix $ \ups > 0$. Then for all $n \in \N$, there exist a sequence $\left(\gamma_j(n)\right)_{j \in \N}$ of positive real numbers and projections $\{p^{\ups}_{n,j} : j \in \N\}$ in $\CM$ such that:
	\begin{enumerate}\setlength\itemsep{.7em} 
		\item $ \displaystyle \lim_{j \to \infty} \gamma_j(n) = 0$, uniformly in $n \in \N$.  
		\item $\rho(1 - p^{ \ups}_{n,j}) \leq (1 + \ups)\gamma_j(n) + \ups$.  
		\item For all $j \in \N$,
		\begin{align}\label{max-cond-0}
		    A_k(\phi_j)(x) \leq  \frac{c s_j}{2^{s_j}} \rho(x), \quad \text{for all } x \in p^{ \ups}_{n,j} \CM_+ p^{\ups}_{n,j} \text{ and } k \in [n].
		\end{align}
	\end{enumerate}
	where, $[n]=\{1,2...,n\}$. The constant $c$ depends only on the group $G$.

\item 
Additionally,  we assume $G$ is unimodular. Then for any $\phi \in \mathcal{M}_*$, there exists an element $\bar{\phi} \in \mathcal{M}_*$ such that, for every $\varepsilon > 0$, one can find a projection $e \in \mathcal{M}$ satisfying $\tau(1-e) < \varepsilon$ and
	\begin{align*}
		\lim_{n \to \infty}
		\sup_{\substack{x \in e\mathcal{M}_+e \\ x \neq 0}}
		\frac{\abs{(A_n(\phi) - \bar{\phi})(x)}}{\tau(x)} = 0 .
	\end{align*}

    \end{enumerate}
\end{thm}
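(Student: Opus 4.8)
The plan is to first establish part (1) — the ``localized'' maximal inequality — by transferring the tracial maximal inequality of Cadilhac–Wang \cite{cadilhac2022noncommutative} to the crossed-product setup, and then deduce part (2) from part (1) together with a Banach principle and the Neveu decomposition. For part (2) itself, the strategy is as follows. First, decompose an arbitrary $\phi \in \CM_*$ into its self-adjoint parts and then (by the Jordan decomposition) into positive parts, and rescale so that one can apply part (1) to a sequence $\phi_j = 2^{s_j}(\phi - \text{something})$ with $\norm{\phi_j} \le 4^{-s_j}$; the role of the $\phi_j$ will be to approximate $\phi$ in $\CM_*$-norm while controlling the size of the associated maximal operator. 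Second, I would invoke the mean ergodic theorem in $\CM_*$ for the amenable group action along the Følner sequence $(F_n)$: since $G$ is amenable and unimodular and the action is state-preserving, the averages $A_n(\phi)$ converge in $\CM_*$-norm to the projection $\bar\phi$ of $\phi$ onto the fixed-point subspace (this is the ``easy'' $L^1$ mean convergence, following from density of coboundaries plus a Følner estimate). This produces the candidate limit $\bar\phi \in \CM_*$.

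The main work is upgrading norm convergence to the stated ``almost uniform'' convergence, i.e. the bilateral almost uniform convergence encoded by the projection $e$ with $\tau(1-e) < \eps$ and $\sup_{x \in e\CM_+ e,\, x \ne 0} \abs{(A_n(\phi) - \bar\phi)(x)}/\tau(x) \to 0$. Here I would combine two ingredients. (i) For $\phi$ in a dense subset of $\CM_*$ — concretely, $\phi$ of the form $\tau(a\,\cdot\,)$ with $a$ a fixed point plus a coboundary, or more simply elements where $A_n(\phi)$ is eventually constant/rapidly convergent — the conclusion is immediate. (ii) For general $\phi$, write $\phi = \psi + (\phi - \psi)$ with $\psi$ in the good dense class and $\norm{\phi-\psi}$ small; apply part (1) to (a suitable rescaling of) $\phi - \psi$ to obtain, for each $\ups > 0$, a single projection $p^{\ups}$ with $\rho(1 - p^{\ups})$ small and $A_k(\phi-\psi)(x) \le \text{(small)}\cdot\rho(x)$ for all $k$ and all $x \in p^\ups \CM_+ p^\ups$; the same bound holds for $\bar{\phi} - \bar\psi$ by taking a limit (weak-$*$ limits of the $A_k$). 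One then has to pass from the \emph{state} $\rho$ back to the \emph{trace} $\tau$: since $\rho$ is normal, $\rho(1-p^\ups)$ small does not directly give $\tau(1-p^\ups)$ small, so at this point one uses that $\tau$ is a \emph{tracial state} dominated appropriately — more carefully, one uses the Neveu-type decomposition to split $\CM$ into a part where a trace-absolutely-continuous comparison is available and a ``singular'' part which is handled separately (this is presumably where ``$G$ unimodular'' is needed, to have the Neveu decomposition for the whole group action at one's disposal). Combining the projection from (ii) with the trivial control on the dense piece $\psi$, intersecting the two projections, and letting $\ups \to 0$ along a sequence while taking a decreasing limit of projections, yields the single $e$ with $\tau(1-e) < \eps$ and the desired uniform smallness of $\abs{(A_n(\phi)-\bar\phi)(x)}/\tau(x)$.

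Concretely, the steps in order: (a) prove part (1) via Calderón transference from the group average to an ambient Markov/Dunford–Schwartz average and the Cadilhac–Wang tracial maximal inequality; (b) deduce $\CM_*$-mean convergence $A_n(\phi) \to \bar\phi$ and identify $\bar\phi$ as the conditional-expectation-type projection onto fixed points; (c) establish the ``good dense class'' on which almost uniform convergence is trivial; (d) run the Banach principle: for $\eta > 0$ choose $\psi$ in the good class with $\norm{\phi - \psi}$ tiny, apply part (1) to the rescaled $\phi - \psi$ to get $p^\ups$, bound $A_n(\phi-\psi)(x)$ and $(\bar\phi - \bar\psi)(x)$ uniformly on $p^\ups\CM_+ p^\ups$; (e) convert the $\rho$-smallness of $1-p^\ups$ to $\tau$-smallness using the Neveu decomposition (unimodular case), intersect projections over a sequence $\eta_m \to 0$, $\ups_m \to 0$, and extract $e$; (f) assemble $\abs{(A_n(\phi)-\bar\phi)(x)} \le \abs{(A_n(\psi)-\bar\psi)(x)} + \abs{A_n(\phi-\psi)(x)} + \abs{(\bar\phi-\bar\psi)(x)}$ and conclude. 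The principal obstacle is step (e): controlling $\tau(1-e)$ rather than $\rho(1-e)$ — i.e. bridging the normal state $\rho$ coming from the tracial transference and the trace $\tau$ in the conclusion — and it is precisely here that unimodularity and the Neveu decomposition must be brought to bear.
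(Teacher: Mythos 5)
Your overall skeleton (transference plus the Cadilhac--Wang inequality for part (1); mean convergence in $\CM_*$ for the candidate limit, a dense class, and a Banach principle for part (2)) is the same as the paper's, but two of your key steps are assigned to the wrong tools, and these are genuine gaps. First, the paper never uses the Neveu decomposition in the proof of part (2), and your ``principal obstacle'' (e) --- converting $\rho$-smallness of $1-p^{\ups}$ into $\tau$-smallness --- is neither where unimodularity enters nor something that requires any splitting into absolutely continuous and singular parts: since $\rho$ and $\tau$ are both faithful normal states on the finite algebra $\CM$, there is a density $X\in L^1(\CM,\tau)_+$ with $\rho=\tau(X\,\cdot\,)$, and the spectral projections $q_s=\chi_{(1/s,s)}(X)$ give $\tau(1-q_s)\to 0$ together with the two-sided comparisons $\rho\le s\,\tau$ and $\tau\le s\,\rho$ on the corner $q_s\CM_+ q_s$; the uniform-in-$n$ conversion of $\rho(1-q_{N,j})\to 0$ into $\tau(1-q_{N,j})\to 0$ is handled by Lemma \ref{uniform conv lem}. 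Unimodularity is needed elsewhere, namely in Proposition \ref{dense set} (through Remark \ref{invariantnu}, the $G$-invariance of the mean limit), i.e.\ precisely in the dense-class step that you describe as ``immediate''. It is not immediate: for $\nu\le\lambda\rho$ one must represent $\nu$ by a commutant element $y_1'$ and use the commutant dynamical system of Lemma \ref{key lem1} together with the mean ergodic theorem in the GNS space to obtain the uniform-in-$x$ estimate $\abs{(A_n(\nu_k)-\bar{\nu})(x)}\le\norm{B_n(y_1'-B_k(y_1'))}_\rho\,\rho(x)$; norm convergence of $A_n(\nu)$ alone gives no pointwise control on corners, and the density of such dominated $\nu$ in $\CM_{*+}$ is itself a (cited) lemma.

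Second, your step (d) treats part (1) as if it produced, for a single rescaled $\phi-\psi$, one projection $p^{\ups}$ on which $A_k(\phi-\psi)(x)\le(\text{small})\,\rho(x)$ for \emph{all} $k$; but part (1) is a finitary inequality: the projection $p^{\ups}_{n,j}$ depends on $n$, the bound holds only for $k\in[n]$, and the constant $\tfrac{cs_j}{2^{s_j}}$ is small only as $j\to\infty$. The Banach principle must therefore be run with a whole sequence $\phi_j\to\phi$ (after passing to a subsequence with $\norm{\phi-\phi_j}\le 4^{-s_j}$), a diagonal choice of projections of the form $q_l=q_{N_{j_{l+1}},j_l}$ that exploits the uniformity in $n$ of $\gamma_j(n)$, a countable intersection of projections (including the spectral projection $q_{s_0}$ of $X$), and the b.a.u.-completeness of $\CM_*$, so that the limit is produced as a b.a.u.-Cauchy limit rather than by bounding $\bar{\phi}-\bar{\psi}$ on a corner. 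As written, your single-projection, all-$k$ claim and the ``take a weak-$*$ limit to bound $\bar{\phi}-\bar{\psi}$'' step do not follow from part (1) and would need exactly this additional bookkeeping to be repaired.
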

We note that an similar maximal inequality obtained in \cite[Corollary~3.18]{Bik-Dip-Amenable}, which was proved for state-preserving actions on group metric measure spaces. In that setting, the authors derive the estimate
\[
\lvert A_k(\phi_j)(x) \rvert 
\leq C \delta \, \|\phi_j\|\, \|x\|
+ \frac{C}{2^j}\, \rho(x),
\]
for all $x \in q^{\delta,\ups}_{n,j}\, \CM_+\, q^{\delta,\ups}_{n,j}$ and all $k \in [n]$.

In contrast, in the  present maximal inequality, specifically in  \cref{max-cond-0}, the additional error term $C\delta \|\phi_j\|\,\|x\|$ does not appear. Instead, we obtain the sharper bound
\[
A_k(\phi_j)(x) \leq \frac{c s_j}{2^{s_j}}\, \rho(x),
\qquad
\text{for all } x \in q^{\ups}_{j,n}\, \CM_+\, q^{\ups}_{j,n}
\text{ and } k \in [n],
\]
where the auxiliary sequence $(s_j)$ arises naturally in the passage from the trace to the state.\\

  We conclude this introduction by outlining the structure of the paper. In the following section, we review basic notions from the theory of von Neumann algebras and their associated linear functionals. We also introduce noncommutative $L^1$-spaces, discuss the bilateral almost uniform (b.a.u.) topology, and conclude the section with the definition of noncommutative dynamical systems and the corresponding predual actions.

Section~\ref{Maximal Inequality} is devoted to the proof of part~(1) of Theorem~\ref{main thm}. In Section~\ref{Pointwise ergodic theorems}, we establish a pointwise ergodic theorem for ball averages along an admissible F\o lner  sequence, thereby proving part~(2) of Theorem~\ref{main thm}. In addition, we derive an appropriate Banach principle adapted to this setting.

Finally, in the last section, we combine the results obtained in Section~\ref{Pointwise ergodic theorems} with the Neveu decomposition theorem from \cite{Bik-Dip-neveu} to obtain a stochastic ergodic theorem within this framework.


	\section{\bf Preliminaries}
Throughout this article, let \(\mathcal{M}\) denote a von Neumann algebra acting on a separable Hilbert space \(\mathcal{H}\). The operator norm inherited from \(\mathcal{B}(\mathcal{H})\) is written as \(\lVert \cdot \rVert_\infty\); when no confusion can arise, we abbreviate it simply by \(\lVert \cdot \rVert\). The commutant of \(\mathcal{M}\) is denoted by \(\mathcal{M}'\). We further write \(\mathcal{P}(\mathcal{M})\) for the collection of projections contained in \(\mathcal{M}\).\\

The predual of \(\mathcal{M}\), is denoted by \(\mathcal{M}_*\), is the closed linear subspace of the Banach dual \(\mathcal{M}^*\) consisting exactly of the ultraweakly continuous linear functionals on \(\mathcal{M}\). A functional \(\phi \in \mathcal{M}^*\) is called positive if \(\phi(x^{*}x) \geq 0\) for every \(x \in \mathcal{M}\), and it is said to be self-adjoint when \(\phi(x) = \overline{\phi(x^{*})}\) holds for all \(x \in \mathcal{M}\). We denote by \(\mathcal{M}_{*+}\) and \(\mathcal{M}_{*s}\) the cones of positive and self-adjoint elements of \(\mathcal{M}_*\), respectively.\\

\subsection{\textbf{Non-commutative $L^1$-spaces}}


Let \(\mathcal{M} \subseteq \mathcal{B}(\mathcal{H})\) be a von Neumann algebra equipped with a faithful, normal, semifinite (f.n.s.) trace \(\tau\). A (possibly unbounded) operator 
\[
X : \mathcal{D}(X) \subseteq \mathcal{H} \to \mathcal{H},
\]
which is densely defined, closed, and affiliated with \(\mathcal{M}\), i.e,  satisfying \(u'^{*} X u' = X\) for every unitary \(u' \in \mathcal{M}'\)—will be written as \(X \,\eta\, \mathcal{M}\). Such an operator \(X\) is said to be \(\tau\)-measurable if, for every \(\varepsilon > 0\), there exists a projection \(e \in \mathcal{P}(\mathcal{M})\) such that \(\tau(1-e) < \varepsilon\) and \(e\mathcal{H} \subseteq \mathcal{D}(X)\). The collection of all \(\tau\)-measurable operators affiliated with \(\mathcal{M}\) is then defined by
\[
L^{0}(\mathcal{M},\tau) := \{\, X \,\eta\, \mathcal{M} : X \text{ is } \tau\text{-measurable}\,\}.
\]

The space \(L^{0}(\mathcal{M},\tau)\) forms a \(^*\)-algebra under the adjoint operation, strong sum \(X+Y := \overline{X+Y}\), and strong product \(X \cdot Y := \overline{XY}\), where \(\overline{X}\) denotes the closure of \(X\). Detailed treatments may be found in \cite{stratila2019lectures, hiai2021lectures}.

Several natural topologies arise on \(L^{0}(\mathcal{M},\tau)\). The first relevant one is the \emph{measure topology}, whose neighborhoods are of the form
\[
X + \mathcal{N}(\varepsilon,\delta),
\qquad \varepsilon,\delta > 0,\; X \in L^{0}(\mathcal{M},\tau),
\]
where
\[
\mathcal{N}(\varepsilon,\delta)
 := \{\, X \in L^{0}(\mathcal{M},\tau) : \exists\, e \in \mathcal{P}(\mathcal{M}) 
 \text{ with } \tau(1-e) < \delta \text{ and } \|eXe\| < \varepsilon \,\}.
\]
A net \(\{X_i\}_{i\in I}\) converges to \(X\) in the measure topology precisely when, for every \(\varepsilon,\delta>0\), there exists \(i_0\in I\) such that for all \(i\ge i_0\) one may find \(e_i\in \mathcal{P}(\mathcal{M})\) with \(\tau(1-e_i) < \delta\) and
\[
\|\, e_i (X_i - X)e_i \,\| < \varepsilon.
\]

By \cite[Theorem 1]{nelson1974notes}, the space \(L^{0}(\mathcal{M},\tau)\) is complete in the measure topology. Furthermore, this topology is metrizable and renders \(\mathcal{M}\) dense in \(L^{0}(\mathcal{M},\tau)\).

The second topology of interest is the \emph{bilateral almost uniform} (b.a.u.) topology.

\begin{defn}\label{bau conv defn}
A net \(\{X_i\}_{i\in I}\subseteq L^{0}(\mathcal{M},\tau)\) is said to converge \emph{bilaterally almost uniformly} (b.a.u.) to \(X\in L^{0}(\mathcal{M},\tau)\) if, for every \(\varepsilon>0\), there exists a projection \(e \in \mathcal{P}(\mathcal{M})\) such that \(\tau(1-e) < \varepsilon\) and 
\[
\lim_{i} \|\, e(X_i - X)e \,\| = 0.
\]
\end{defn}

The following fundamental fact, due to \cite[Theorem 2.2]{litvinov2024notes}, will be used subsequently.

\begin{thm}\label{complete wrt bau}
The space \(L^{0}(\mathcal{M},\tau)\) is complete with respect to the b.a.u.\ topology.
\end{thm}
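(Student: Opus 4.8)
The plan is to unwind what completeness in the b.a.u.\ topology amounts to, namely that every \emph{b.a.u.-Cauchy} net converges b.a.u.; here a net $\{X_i\}_{i\in I}\subseteq L^{0}(\CM,\tau)$ is b.a.u.-Cauchy if for every $\eps>0$ there is a projection $e\in\CP(\CM)$ with $\tau(1-e)<\eps$ and $\lim_{i,j}\norm{e(X_i-X_j)e}=0$ (since the operator norm is $+\infty$ on unbounded operators, this already forces $e(X_i-X_j)e$ to be bounded for $i,j$ large). The strategy has three stages: for each dyadic tolerance $2^{-k}$ produce a compression in which the net is genuinely norm-Cauchy; amalgamate these compressions into a single increasing sequence of projections $g_N\uparrow 1$ in which the compressed net norm-converges; and patch the resulting corner limits into one $\tau$-measurable operator $X$, then check $X_i\to X$ b.a.u.

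For the first stage I would isolate the one genuinely non-commutative subtlety: the projection $e$ afforded by the b.a.u.-Cauchy condition need not reduce $X_i$ to a bounded operator, so the compressions $eX_ie$ are in general only (possibly unbounded) elements of $L^{0}(\CM,\tau)$ and one cannot directly treat $\{eX_ie\}_i$ as a Cauchy net in $\CM$. The condition does, however, make the \emph{differences} $eX_ie-eX_je=e(X_i-X_j)e$ eventually lie in $e\CM e$ with $\norm{e(X_i-X_j)e}\to 0$; fixing a base index $i_0$ past the point where this boundedness sets in, $\{eX_ie-eX_{i_0}e\}_{i\ge i_0}$ is norm-Cauchy in the Banach space $e\CM e$, hence norm-converges to some $Y\in e\CM e$. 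Consequently $eX_ie$ converges — meaning $Z-eX_ie\in\CM$ eventually and $\norm{Z-eX_ie}\to 0$ — to $Z:=eX_{i_0}e+Y\in L^{0}(\CM,\tau)$, which moreover satisfies $eZe=Z$.

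For the second and third stages I would apply the first stage with the tolerances $2^{-k}$, obtaining projections $e_k$ with $\tau(1-e_k)<2^{-k}$, and set $g_N:=\bigwedge_{k\ge N}e_k$. Then $g_N$ is increasing, and from $1-\bigwedge_{k\ge N}e_k=\bigvee_{k\ge N}(1-e_k)$, the estimate $\tau(p\vee q)\le\tau(p)+\tau(q)$, and normality of $\tau$, one gets $\tau(1-g_N)\le\sum_{k\ge N}2^{-k}=2^{-N+1}$, so $g_N\uparrow 1$ since $\tau$ is faithful. As $g_N\le e_N$, the net $\{g_NX_ig_N\}_i$ inherits the Cauchy property and, by the argument of the first stage, norm-converges to some $W_N\in L^{0}(\CM,\tau)$ with $g_NW_Ng_N=W_N$; moreover $g_N\le g_M$ for $M\ge N$ gives $g_NW_Mg_N=W_N$, whence $g_N(W_M-W_N)g_N=0$ and therefore $W_M-W_N\in\CN(\eta,\delta)$ for every $\eta>0$ as soon as $\delta>\tau(1-g_N)$. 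Thus $(W_N)_N$ is Cauchy in the measure topology; by completeness of $L^{0}(\CM,\tau)$ in that topology (\cite[Theorem~1]{nelson1974notes}) there is $X\in L^{0}(\CM,\tau)$ with $W_N\to X$ in measure, and letting $M\to\infty$ in $g_NW_Mg_N=W_N$ — the map $Y\mapsto g_NYg_N$ being measure-continuous and the left side eventually constant equal to $W_N$ — yields $g_NXg_N=W_N$. Finally, given $\eps>0$ I would pick $N$ with $\tau(1-g_N)<\eps$ and conclude $\norm{g_N(X_i-X)g_N}=\norm{g_NX_ig_N-W_N}\to 0$, which is precisely b.a.u.-convergence $X_i\to X$.

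The hard part is essentially only the first stage: in the commutative setting a family that is pointwise Cauchy on a set of almost full measure is automatically Cauchy in $L^{\infty}$ of that set, whereas here unboundedness forces one to subtract off a fixed operator and argue with differences before any Banach-space completeness can be invoked. The remaining ingredients are bookkeeping: the trace bound for the countable meet, the compatibility relations $g_NW_Mg_N=W_N$ among the corner limits, and continuity of $Y\mapsto g_NYg_N$ for both the operator norm and the measure topology. I would also remark that, since the measure topology is metrizable, the completeness input may be run along a sequence, and since only the countable family $(g_N)$ of projections enters, the argument applies verbatim to b.a.u.-Cauchy nets, yielding completeness of $L^{0}(\CM,\tau)$ in the b.a.u.\ topology.
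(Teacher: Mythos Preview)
The paper does not actually prove this theorem; it merely quotes it as a known result from \cite[Theorem~2.2]{litvinov2024notes}, so there is no in-paper argument to compare against.

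Your proposal is correct and is essentially the standard route to this fact. The three stages are sound: the observation that only the \emph{differences} $e(X_i-X_j)e$ are guaranteed bounded, so one must subtract a base term $eX_{i_0}e$ before invoking completeness of $e\CM e$; the dyadic construction $g_N=\bigwedge_{k\ge N}e_k$ with $\tau(1-g_N)\le 2^{-N+1}$; and the patching via the compatibility $g_NW_Mg_N=W_N$ together with measure-completeness from \cite{nelson1974notes}. The passage from $W_M\to X$ in measure to $g_NXg_N=W_N$ via continuity of two-sided multiplication by $g_N$ is legitimate, and your remark that the sequence $(W_N)_N$ is automatically countable, so that metrizability of the measure topology suffices, correctly handles the net-versus-sequence issue. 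One cosmetic point: in the compatibility step the left side $g_NW_Mg_N$ is \emph{exactly} $W_N$ for every $M\ge N$, not merely eventually, so the limit is immediate.
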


We record a simple but useful observation.

\begin{prop}
Let \(\{X_i\}_{i\in I}\) and \(\{Y_i\}_{i\in I}\) be nets in \(L^{0}(\mathcal{M},\tau)\) such that \(X_i \to X\) and \(Y_i \to Y\) in measure (resp.\ b.a.u.). Then, for every scalar \(c\in \mathbb{C}\), the net \(cX_i + Y_i\) converges to \(cX+Y\) in measure (resp.\ b.a.u.).
\end{prop}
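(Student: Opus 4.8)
The plan is to handle the measure-topology and the b.a.u.\ statements simultaneously, since they differ only in whether the controlling projection is allowed to depend on the index $i$. In either case the argument rests on three elementary ingredients: localising each of the two given convergences on a projection of nearly full trace, amalgamating two such projections by means of the Kaplansky parallelogram law $\tau(1-e\wedge f)\le\tau(1-e)+\tau(1-f)$ (which follows from $1-e\wedge f=(1-e)\vee(1-f)$ together with $p\vee q-q\sim p-p\wedge q$), and, once both operators have been compressed to a common subprojection, the triangle inequality $\|e(cX_i+Y_i-cX-Y)e\|\le|c|\,\|e(X_i-X)e\|+\|e(Y_i-Y)e\|$.

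For the b.a.u.\ case I would fix $\varepsilon>0$, apply Definition~\ref{bau conv defn} to $X_i\to X$ and to $Y_i\to Y$ each with parameter $\varepsilon/2$ to obtain projections $e_1,e_2\in\mathcal{P}(\mathcal{M})$ with $\tau(1-e_j)<\varepsilon/2$, $\lim_i\|e_1(X_i-X)e_1\|=0$, and $\lim_i\|e_2(Y_i-Y)e_2\|=0$, and then set $e:=e_1\wedge e_2$. The Kaplansky formula gives $\tau(1-e)<\varepsilon$; and because $e\le e_j$ we have $ee_j=e$, so $\|e(X_i-X)e\|\le\|e_1(X_i-X)e_1\|\to0$ and similarly $\|e(Y_i-Y)e\|\to0$, whence $\lim_i\|e(cX_i+Y_i-cX-Y)e\|=0$. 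The measure-topology case is identical with the obvious quantitative bookkeeping: given $\varepsilon,\delta>0$, use the hypotheses to find $i_0$ so that for every $i\ge i_0$ there are $e_i$ with $\tau(1-e_i)<\delta/2$ and $\|e_i(X_i-X)e_i\|<\varepsilon/(2(1+|c|))$, and $f_i$ with $\tau(1-f_i)<\delta/2$ and $\|f_i(Y_i-Y)f_i\|<\varepsilon/2$; then $g_i:=e_i\wedge f_i$ satisfies $\tau(1-g_i)<\delta$ and $\|g_i(cX_i+Y_i-cX-Y)g_i\|<\varepsilon$, i.e.\ $cX_i+Y_i-(cX+Y)\in\mathcal{N}(\varepsilon,\delta)$ for all $i\ge i_0$.

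The only point needing care beyond this formal manipulation is the handling of unbounded operators: strictly speaking $cX_i+Y_i$ denotes the strong sum $\overline{cX_i+Y_i}$, so one must justify that $e(cX_i+Y_i-cX-Y)e=c\,e(X_i-X)e+e(Y_i-Y)e$ holds as an identity of bounded operators. I expect this to be the only mildly delicate step; it is resolved by shrinking the projections above—intersecting with one further projection of arbitrarily small co-trace, available by $\tau$-measurability—so that their ranges lie in $\mathcal{D}(X_i)\cap\mathcal{D}(X)\cap\mathcal{D}(Y_i)\cap\mathcal{D}(Y)$, where the strong sum coincides with the pointwise sum; alternatively one simply invokes that $L^{0}(\mathcal{M},\tau)$ is a topological $*$-algebra in both topologies, so that addition is jointly continuous and the claim is immediate. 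Either way no new idea is required.
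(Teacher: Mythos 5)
Your argument is correct, and in fact the paper offers no proof of this proposition at all (it is recorded there as a ``simple but useful observation''), so your write-up supplies exactly the standard argument one would expect: localise each convergence on a projection of nearly full trace, intersect the two projections using $\tau(1-e\wedge f)\le\tau(1-e)+\tau(1-f)$ (via $1-e\wedge f=(1-e)\vee(1-f)$ and the parallelogram law), and finish with the triangle inequality; the same bookkeeping with index-dependent projections handles the measure topology. Two small remarks on your final paragraph. First, the cleanest way to justify $e(cX_i+Y_i-cX-Y)e=c\,e(X_i-X)e+e(Y_i-Y)e$ is simply that $L^{0}(\mathcal{M},\tau)$ is a $*$-algebra under strong sum and strong product, in which multiplication by $e$ distributes over the strong sum; this disposes of the unboundedness issue without shrinking any projections, which is preferable because in the b.a.u.\ case one would otherwise need a single projection whose range sits inside $\mathcal{D}(X_i)$ for \emph{all} $i$ in a possibly uncountable net, and producing it is not as routine as your parenthetical suggests. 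Second, your alternative of ``invoking that $L^{0}$ is a topological $*$-algebra in both topologies'' is fine for the measure topology (Nelson), but for the b.a.u.\ topology the continuity of addition is essentially the statement being proved, so that route is circular and should be dropped; with the distributivity remark in its place, your proof is complete.
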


The trace \(\tau\) on \(\mathcal{M}_+\) extends in the standard way to \(L^{0}(\mathcal{M},\tau)_+\) by
\[
\tau(X) := \int_0^{\infty} \lambda \, d\tau(e_\lambda),
\]
where \(X = \int_{0}^{\infty} \lambda\, de_\lambda\) is the spectral decomposition of \(X\). The associated noncommutative \(L^1\)-space is defined by
\[
L^{1}(\mathcal{M},\tau)
 := \{\, X \in L^{0}(\mathcal{M},\tau) : \tau(|X|) < \infty \,\},
\]
with norm \(\|X\|_1 := \tau(|X|)\). The following properties, proved in \cite[Chapter 4]{hiai2021lectures}, will be used repeatedly.

\begin{thm}\label{predual prop}
\begin{enumerate}
    \item \(L^{1}(\mathcal{M},\tau)\) is a Banach space under \(\|\cdot\|_1\).
    \item If \(X \in L^{1}(\mathcal{M},\tau)\) and \(y \in \mathcal{M}\), then both \(Xy\) and \(yX\) lie in \(L^{1}(\mathcal{M},\tau)\), and moreover \(\tau(Xy)=\tau(yX)\).
    \item The canonical identification
    \[
    \phi \in \mathcal{M}_* \quad \longleftrightarrow \quad 
    X \in L^{1}(\mathcal{M},\tau), \quad 
    \phi(x) = \tau(Xx) \;\; (x\in \mathcal{M})
    \]
    defines a surjective linear isometry from \(L^{1}(\mathcal{M},\tau)\) onto \(\mathcal{M}_*\), which preserves positivity.
\end{enumerate}
\end{thm}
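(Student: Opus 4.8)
The plan is to base the entire argument on the calculus of generalized singular value functions. For $X \in L^0(\mathcal{M},\tau)$ let $\mu_t(X)$, $t>0$, denote the nonincreasing rearrangement of $X$ determined by $\tau$, so that $\|X\|_1 = \tau(|X|) = \int_0^\infty \mu_t(X)\,dt$. Two facts from this calculus, both recorded in \cite[Chapter 4]{hiai2021lectures}, will be used repeatedly: the submajorization inequality $\int_0^s \mu_t(X+Y)\,dt \le \int_0^s \mu_t(X)\,dt + \int_0^s \mu_t(Y)\,dt$ valid for every $s>0$, and the domination $\mu_t(aXb) \le \|a\|\,\|b\|\,\mu_t(X)$ for $a,b \in \mathcal{M}$. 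I will also use that $\int_0^s \mu_t(\cdot)\,dt$ is lower semicontinuous with respect to convergence in the measure topology.

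Parts (1) and (2) are then routine. Homogeneity of $\|\cdot\|_1$ is clear, definiteness follows from faithfulness of $\tau$ (as $\tau(|X|)=0$ forces $|X|=0$, hence $X=0$), and letting $s\to\infty$ in the submajorization inequality gives the triangle inequality. For completeness I would show a $\|\cdot\|_1$-Cauchy sequence is Cauchy in measure: a Chebyshev-type estimate using the spectral projections of $|X|$ bounds the neighbourhoods $\mathcal{N}(\varepsilon,\delta)$ in terms of $\|X\|_1$, so by completeness of $L^0(\mathcal{M},\tau)$ in measure (\cite[Theorem 1]{nelson1974notes}) the sequence converges in measure to some $X$, and lower semicontinuity of $\int_0^s\mu_t(\cdot)\,dt$ upgrades this to convergence in $\|\cdot\|_1$ with $X \in L^1(\mathcal{M},\tau)$. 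For Part (2), the domination $\mu_t(yX)\le\|y\|\,\mu_t(X)$ and $\mu_t(Xy)\le\|y\|\,\mu_t(X)$ give $\|yX\|_1,\|Xy\|_1 \le \|y\|\,\|X\|_1 < \infty$; the trace identity $\tau(Xy)=\tau(yX)$ holds on the dense subspace $\mathcal{M}\cap L^1(\mathcal{M},\tau)$ by the ordinary trace property of $\tau$ on $\mathcal{M}$, and extends to all of $L^1(\mathcal{M},\tau)$ since both sides are $\|\cdot\|_1$-continuous.

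For Part (3) I define $\Phi\colon L^1(\mathcal{M},\tau)\to\mathcal{M}_*$ by $\Phi(X)(x)=\tau(Xx)$. For $X \in \mathcal{M}\cap L^1(\mathcal{M},\tau)$ the functional $x\mapsto\tau(Xx)$ is normal because $\tau$ is normal; the Hölder-type bound $|\tau(Xx)|\le\|X\|_1\,\|x\|$ shows $\Phi$ is $\|\cdot\|_1$-to-$\|\cdot\|$ contractive, and since $\mathcal{M}_*$ is norm-closed in $\mathcal{M}^*$ it follows that $\Phi(X)\in\mathcal{M}_*$ for every $X$, with $\|\Phi(X)\|\le\|X\|_1$. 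For the reverse inequality I use the polar decomposition $X=u|X|$ and test against $x=u^*$, obtaining $\tau(Xu^*)=\tau(u|X|u^*)=\tau(|X|\,u^*u)=\tau(|X|)=\|X\|_1$, where $u^*u$ is the support projection of $|X|$; hence $\Phi$ is isometric. Positivity preservation is immediate, since $X\ge 0$ gives $\Phi(X)(x^*x)=\tau(Xx^*x)\ge 0$.

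The main obstacle is surjectivity of $\Phi$. I treat $\phi\in\mathcal{M}_{*+}$ first. The mechanism is a comparison lemma: working in a corner $e_0\mathcal{M}e_0$ with $\tau(e_0)<\infty$, the Hahn–Jordan decomposition of the bounded self-adjoint normal functional $\phi-\lambda\tau$ yields a nonzero projection $e\le e_0$ on which $\phi|_{e\mathcal{M}e}\le\lambda\,\tau|_{e\mathcal{M}e}$, so the noncommutative Radon–Nikodym theorem furnishes a bounded density $h_e\in(e\mathcal{M}e)_+$ with $\phi(x)=\tau(h_e x)$ for $x\in e\mathcal{M}e$. An exhaustion argument then produces projections $e_n\uparrow 1$ carrying compatible densities $h_n$, and monotonicity together with $\sup_n\tau(h_n)=\phi(1)<\infty$ shows, via the completeness from Part (1), that $(h_n)$ converges in $\|\cdot\|_1$ to some $h\in L^1(\mathcal{M},\tau)_+$ representing $\phi$. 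The general case follows from the Jordan decomposition $\phi=(\phi_1-\phi_2)+i(\phi_3-\phi_4)$ into positive normal functionals. The delicate point throughout is this last passage to a possibly unbounded limit: one must ensure the densities converge in $L^1(\mathcal{M},\tau)$ and not merely in measure, which is exactly what the norm estimate $\tau(h_n)\to\phi(1)$ secures.
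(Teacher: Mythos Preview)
The paper does not supply its own proof of this theorem: it is recorded as background and attributed in full to \cite[Chapter 4]{hiai2021lectures}. Your sketch therefore cannot be compared to any argument in the paper, but it follows one of the standard routes (generalized singular values for (1)--(2), polar decomposition for the isometry, and a Radon--Nikodym/exhaustion argument for surjectivity) and is essentially correct.

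Two small points worth tightening. First, in your comparison lemma the Hahn--Jordan decomposition of $\phi-\lambda\tau$ on $e_0\mathcal{M}e_0$ yields a nonzero projection on which $\phi\le\lambda\tau$ only when $\lambda$ is chosen large enough, namely $\lambda>\phi(e_0)/\tau(e_0)$; you state the projection is nonzero without saying why. Second, in the exhaustion step you should spell out why the densities $h_n$ are increasing and compatible: if $e_n\le e_m$ then uniqueness of the density on $e_n\mathcal{M}e_n$ forces $h_n=e_nh_me_n\le h_m$, which is what makes $\|h_m-h_n\|_1=\tau(h_m)-\tau(h_n)\to 0$ work. With those two clarifications the argument is complete.
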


\subsection{Non-commutative dynamical systems}
Let $G$ be a locally compact, second countable, Hausdorff group equipped with a
right-invariant Haar measure $\mu$.
A \emph{Følner sequence} $(F_n)_{n \in \mathbb{N}}$ is a sequence of measurable subsets
of $G$ satisfying
\[
0 < \mu(F_n) < \infty \quad \text{for all } n \in \mathbb{N},
\]
and such that for every $g \in G$,
\[
\lim_{n \to \infty}
\frac{\mu\bigl(F_n g \,\triangle\, F_n\bigr)}{\mu(F_n)} = 0,
\]
where $\triangle$ denotes the symmetric difference.

Throughout this article,  we further, assume that $G$ is an amenable group with a Følner sequence
$(F_n)$   in $G$. In addition, for most of our results,
we shall assume that $(F_n)$ is an \emph{admissible Følner sequence}
(see \cite{cadilhac2022noncommutative} for the precise definition).\\

\noindent We also remark that if $G$ is a locally compact, second countable, amenable group,
then every Følner sequence admits an admissible Følner subsequence; see \cite{cadilhac2022noncommutative} for details.

\begin{defn}\label{nc dyn sys} Let \((E,\|\cdot\|)\) be a real ordered Banach space.
Then a \emph{non-commutative dynamical system} is a triple \((E,G,\gamma)\), where \(\gamma : G \to \mathcal{B}(E)\) is a mapping satisfying \(\gamma_s \circ \gamma_t = \gamma_{st}\) for all \(s,t \in G\), and the following conditions hold:
\begin{enumerate}
    \item for every \(a \in E\), the orbit map \(s \mapsto \gamma_s(a)\) is continuous on \(G\), where \(E\) is equipped with the norm topology (and the \(w^*\)-topology in the case \(E=\mathcal{M}\));
    \item the operators are uniformly bounded, i.e., \(\sup_{s \in G}\|\gamma_s\| < \infty\);
    \item positivity is preserved: if \(a \ge 0\), then \(\gamma_s(a) \ge 0\) for all \(s\in G\).
    
\end{enumerate}
\end{defn}
\noindent We further assume that if $E = \CM$, $\gamma_{s}$ is a *-automorphism on $\CM$ for all $s \in G$.

\noindent Let \(\mathcal{M}\) be a von Neumann algebra with a faithful, normal, semifinite trace \(\tau\), and consider the dynamical system \((L^{1}(\mathcal{M},\tau), G, \gamma)\). Since \(\mathcal{M} = (L^{1}(\mathcal{M},\tau))^{*}\), the Banach-space adjoint of \(\gamma_s\), denoted \(\gamma_s^{*}\), is characterized by
\begin{align}\label{dual trans}
    \tau(\gamma_s^{*}(x)\, Y) = \tau(x\, \gamma_s(Y)), 
    \qquad x \in \mathcal{M}, \; Y \in L^1(\mathcal{M},\tau).
\end{align}

Similarly, for a non-commutative dynamical system \((\mathcal{M}, G, \beta)\), the predual operator of \(\beta_s\), denoted \(\widehat{\beta_s}\), is defined via
\begin{align}\label{predual trans}
    \tau(\beta_s(x)\, Y) = \tau(x\, \widehat{\beta_s}(Y)),
    \qquad x \in \mathcal{M}, \; Y \in L^{1}(\mathcal{M},\tau).
\end{align}
In particular, one has \((\widehat{\beta_s})^{*} = \beta_s\) for every \(s \in G\).

Now let \(T : \mathcal{M} \to \mathcal{M}\) be a bounded normal operator. Its adjoint \(T^{*} : \mathcal{M}^{*} \to \mathcal{M}^{*}\) is given by
\[
    T^{*}(\phi)(x) = \phi(T(x)), \qquad x \in \mathcal{M},\; \phi \in \mathcal{M}^{*}.
\]
If \(\phi \in \mathcal{M}_{*}\subseteq \mathcal{M}^{*}\), then \(T^{*}(\phi)\) also belongs to \(\mathcal{M}_{*}\); hence the restriction \(T^{*}|_{\mathcal{M}_{*}}\colon \mathcal{M}_{*}\to \mathcal{M}_{*}\) serves as the predual transformation associated with \(T\).

To streamline notation, we shall not distinguish between \(T\) and its predual \(T^{*}|_{\mathcal{M}_*}\). Thus, when \(\phi \in \mathcal{M}_{*}\) and we write \(T(\phi)\), it is to be understood that \(T\) acts as the predual map. Under this convention,
\begin{align}\label{predualmap}
    T(\phi)(x) = \phi(T(x)),
    \qquad x \in \mathcal{M},\; \phi \in \mathcal{M}_{*}.
\end{align}



\section{\textbf{Maximal Inequality}}\label{Maximal Inequality}
In this section we establish a maximal inequality associated to a state preserving action of an amenable group  on von Neumann algebra.

Suppose  $ \CM$ be a von Neumann algebra with a f.n.s. trace $\tau$. Then, consider the von Neumann algebra $\CN =: L^\infty(G, \mu) \otimes \CM$  and consider the f.n.s trace $\widetilde{\tau}$ on $\CN$, defined by 
$$ \widetilde{\tau}= \int_G( \cdot ) d\mu  ~\otimes \tau $$

We note that $ L^1(\CN, \widetilde{\tau})$ is identified with 
$ L^1\left(G, L^1(\CM, \tau) \right)$. Further, we note that  
$ L^1(\CM, \tau)$ can be identified with $\CM_*$ via the following map: 
$$ L^1(\CM, \tau) \ni A \to \tau_A \in \CM_*, $$
where $\tau_A(x) = \tau(Ax) $ for $ x \in \CM$. This will establish an isometric  isomorphism between $ L^1(\CM, \tau)$ and  $\CM_*$. Consequently, $ L^1\left(G, L^1(\CM, \tau) \right)$ is isometrically isomorphic to  $ L^1(G, \CM_*)$  via the following map 
$$ L^1\left(G, L^1(\CM, \tau) \right) \ni f \mapsto 
\tau_f \in L^1(G; \CM_*), $$
where $ \tau_f(s) =: \tau_{ f(s)}$. 
Further, as  $\CN= L^\infty(G, \mu) \otimes \CM \cong $ $L^\infty(G; \CM):=$ the space of weakly measurable essentially bounded functions from $G$ to $\CM$, we obtain that 
$$ \widetilde{\tau}_f(g) =:\widetilde{ \tau}(f g ) = \int_G \tau( f(s) g(s) ) d\mu(s),$$
where $f \in L^1\left( G, L^1(\CM, \tau) \right)$   and   $g \in   L^\infty(G; \CM)$. We also note that $ \widetilde{\tau}( g) = \int_G \tau( g(s)) d\mu(s), \text{ for } g \in L^\infty(G; \CM)$.
Let $ (F_n)$ be a F\o lner sequence in $G$ and $ f \in L^1( G) \otimes M_{ *}$
Then consider 
$$ \A_n(f)(s) = \frac{ 1}{ \mu( F_n)}\int_{ F_n} f(st) d\mu(t).$$

\begin{lem}\label{average-relation}
Let $ f \in L^1\left( G,  \CM_*\right)_+$ and 
$ h \in L^1( G; L^1(\CM, \tau) )_+ $ such that 
$$ f =\widetilde{ \tau}_h, $$ then it follows that 
$$ \A_n(f) = \widetilde{ \tau }_{ \A_n(h)}, ~ \text{ for all } n \in \N.$$
\end{lem}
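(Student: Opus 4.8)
The plan is to unwind both sides of the claimed identity against an arbitrary test function $g \in L^\infty(G;\CM)$ using the pairing $\widetilde\tau(\,\cdot\,g)$, and to reduce everything to Fubini together with the right-invariance of $\mu$. Concretely, for fixed $n$ and $g \in L^\infty(G;\CM)$, I would compute
\[
\widetilde\tau_{\A_n(h)}(g) = \int_G \tau\bigl(\A_n(h)(s)\, g(s)\bigr)\, d\mu(s)
= \int_G \tau\!\left( \frac{1}{\mu(F_n)} \int_{F_n} h(st)\, d\mu(t)\, g(s) \right) d\mu(s).
\]
Since $h \in L^1(G;L^1(\CM,\tau))_+$, the function $(s,t) \mapsto \tau(h(st) g(s))$ is integrable on $G \times F_n$ (the inner integral is dominated by $\|g\|_\infty \int_{F_n}\|h(st)\|_1\, d\mu(t)$, whose $\mu$-integral in $s$ is finite by right-invariance of $\mu$ and $h \in L^1(G;L^1)$), so I may pull $\tau$ inside and apply Fubini to swap the order of integration. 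That gives
\[
\widetilde\tau_{\A_n(h)}(g) = \frac{1}{\mu(F_n)} \int_{F_n} \left( \int_G \tau\bigl(h(st) g(s)\bigr)\, d\mu(s) \right) d\mu(t).
\]

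Next I would recognize the inner integral as a pairing of $h$ against a translate of $g$. By definition $\tau_{h(st)} = f(st)$, so $\int_G \tau(h(st) g(s))\, d\mu(s) = \int_G f(st)(g(s))\, d\mu(s)$; equivalently, writing $R_t g$ for the function $s \mapsto g(st^{-1})$ — or more carefully substituting $u = st$ in the $s$-integral, which is legitimate because $\mu$ is right-invariant — one obtains $\int_G f(u)\bigl(g(ut^{-1})\bigr)\, d\mu(u)$. Reassembling,
\[
\widetilde\tau_{\A_n(h)}(g) = \frac{1}{\mu(F_n)}\int_{F_n} \int_G f(u)\bigl(g(ut^{-1})\bigr)\, d\mu(u)\, d\mu(t)
= \int_G \left( \frac{1}{\mu(F_n)}\int_{F_n} f(ut)(g(u))\, d\mu(t)\right) d\mu(u),
\]
where in the last step I undo the substitution (set $u \mapsto u$, and note $f(u)(g(ut^{-1}))$ after relabeling becomes $f(ut)(g(u))$ upon the reverse change of variables, again using right-invariance, then Fubini once more). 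The bracketed quantity is precisely $\A_n(f)(u)(g(u))$ by the definition of $\A_n$ given just before the lemma, so the right-hand side equals $\int_G \A_n(f)(u)(g(u))\, d\mu(u)$. Tracing through the identifications in the preamble (namely $\A_n(f)(u) \in \CM_*$ corresponds under $\tau_{(\cdot)}$ to an element of $L^1(\CM,\tau)$), this is exactly $\widetilde\tau(F\cdot g)$ for the $L^1(G;L^1(\CM,\tau))$-function $F$ with $\tau_F = \A_n(f)$; hence $\A_n(f) = \widetilde\tau_{\A_n(h)}$ since $g$ was arbitrary and $L^\infty(G;\CM)$ separates points of $L^1(G;\CM_*)$.

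The main obstacle is purely bookkeeping rather than conceptual: one must be careful about whether translation acts on the left or the right, whether the change of variables introduces a modular-function factor (it does not, because $\mu$ is taken right-invariant and the averages $\A_n$ are built from right translations $t \mapsto f(st)$, so the substitution $u = st$ with $t$ fixed is measure-preserving), and about justifying the interchange of $\tau$ with the vector-valued Bochner integral and the interchange of the two scalar integrals via Fubini. All three are standard once positivity of $f$ and $h$ and integrability are invoked: positivity lets us ignore absolute-value subtleties and apply Tonelli freely, and the finiteness of $\|h\|_{L^1(G;L^1)}$ supplies the integrable majorant. I would state these justifications briefly and then present the chain of equalities above as the proof.
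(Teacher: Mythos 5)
Your argument is correct and is essentially the paper's own proof run in the reverse direction: both test against an arbitrary $g \in L^\infty(G;\CM)$, use the hypothesis $f = \widetilde{\tau}_h$ (you via the pointwise identification $f(st)=\tau_{h(st)}$ or equivalently, as the paper does, by applying the hypothesis to the right-translate $s \mapsto g(st^{-1})$ together with right-invariance of $\mu$), and conclude by Fubini/Tonelli and the fact that $L^\infty(G;\CM)$ separates points. Your added justifications (pulling $\tau$ through the Bochner integral, the integrable majorant for Fubini) are fine and merely make explicit what the paper leaves implicit.
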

\begin{proof}
Let $ g \in L^\infty(G; \CM)$  and   $ t \in G$, then observe that $ \rho_t(g) \in L^\infty(G, \CM) $, where $ \rho_t(g)(s) = g(st^{-1}) $ for all $ s \in G$. 
As $ f =\widetilde{ \tau}_h $, so, we have 
$$ \int_G f(s)(\rho_t(g)(s)) d\mu = \int_G \tau( h(s) \rho_t(g)(s) ) d\mu.$$
Further, since $\mu$ is right-invariant Haar measure, we have
$$ \int_G f(st)(g(s)) d\mu(s) = \int_G \tau( h(st) (g(s) ) d\mu(s)$$

Thus, we observe that 
\begin{align*}
    \int_G \A_n(f)(s) ( g(s) )  d\mu(s) 
    &=  \int_G  \frac{1}{\mu(F_n)}\int_{ F_n} f(st) ( g(s) )  d\mu(t) d\mu(s)\\
    &= \frac{1}{\mu(F_n)}\int_{ F_n}  \int_G  f(st) ( g(s) )   d\mu(s)   d\mu(t)\\
&= \frac{1}{\mu(F_n)}\int_{ F_n}  \int_G  \tau(h(st) ( g(s) )  )  d\mu(s)   d\mu(t)\\
&=  \int_G  \frac{1}{\mu(F_n)}\int_{ F_n}  \tau(h(st) ( g(s) )  )  d\mu(t)  d\mu(s)  \\
 &=  \int_G \tau(    \A_n(h)(s) g(s) ) d\mu(s).   \end{align*}
This completes the proof.

\end{proof}

\noindent Then by linearity of the operator $\A_n$ and the weight $\tilde{\tau}$, we observe that for all $\nu \in L^1\left( G,  \CM_*\right)$ there exists $ B_\nu \in L^1\left(G; L^1(\CM,\tau) \right)$ such that 
$$ \nu( \cdot) = \widetilde{\tau}( B_\nu (\cdot) ), \text{ i.e}, ~\nu = \widetilde{ \tau}_{ B_\nu }. $$ We can also have the following observation.


\begin{lem}\label{norm}
Let   $ e $  be a  projection in $\CN$ and $ \delta >0$, then $ \norm{e(B_\nu)e} \leq \delta $ if and only if $$\abs{(\nu)(exe)} \leq \delta \widetilde{\tau}(exe),  ~ \text{ for all } x \in \CN_+.$$
   \end{lem}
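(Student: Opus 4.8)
The plan is to unwind both conditions into statements about the trace on $\CN = L^\infty(G,\mu)\otimes\CM$ and then compare them directly. Recall that $B_\nu \in L^1(\CN,\widetilde\tau)$ is the density of $\nu$, meaning $\nu(g) = \widetilde\tau(B_\nu g)$ for all $g \in L^\infty(G;\CM) \cong \CN$. So for a projection $e \in \CN$ and any $x \in \CN_+$, evaluating $\nu$ at the positive element $exe \in \CN$ gives $\nu(exe) = \widetilde\tau(B_\nu\, exe) = \widetilde\tau(e B_\nu e\, x)$, using tracial properties and $e=e^*=e^2$; this identity is the bridge between the two sides of the claimed equivalence.

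For the forward direction, assume $\norm{e B_\nu e} \le \delta$. Write the polar decomposition or simply note that $e B_\nu e \in L^1(\CN,\widetilde\tau)$ (since $B_\nu \in L^1$ and $e \in \CN$, by Theorem \ref{predual prop}(2)). Then for $x \in \CN_+$,
\begin{align*}
\abs{\nu(exe)} = \abs{\widetilde\tau(e B_\nu e\, x)} \le \norm{e B_\nu e}_{\infty}\, \widetilde\tau(x^{1/2} x^{1/2})?
\end{align*}
— more carefully, one uses the standard estimate $\abs{\widetilde\tau(ax)} \le \norm{a}_\infty \widetilde\tau(x)$ valid for $a \in \CN$, $x \in L^1(\CN,\widetilde\tau)_+$ (which follows from $-\norm a_\infty x \le a^{1/2}\cdots$; concretely, write $a = a_1 - a_2 + i(a_3-a_4)$ or just bound the Hermitian/skew parts, and note $-\norm a x \le \RE(a) x$-type inequalities after cyclically rearranging). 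Here $a = e B_\nu e$ need not be in $\CN$ a priori, but the hypothesis $\norm{e B_\nu e}\le\delta$ is precisely the b.a.u./measure-topology-flavored statement that it \emph{is} bounded with norm $\le\delta$; so $\abs{\nu(exe)} = \abs{\widetilde\tau((eB_\nu e)x)} \le \delta\,\widetilde\tau(x) = \delta\,\widetilde\tau(exe)$ after replacing $x$ by $exe$ and observing $\widetilde\tau(exe x') $ vanishes outside... — cleanest is: for $x\in\CN_+$, $\abs{\nu(exe)} = \abs{\widetilde\tau(eB_\nu e \cdot exe)}\le \norm{eB_\nu e}_\infty\,\widetilde\tau(exe)\le\delta\,\widetilde\tau(exe)$.

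For the converse, assume $\abs{\nu(exe)} \le \delta\,\widetilde\tau(exe)$ for all $x \in \CN_+$. By the identity above this reads $\abs{\widetilde\tau(eB_\nu e\, x)} \le \delta\,\widetilde\tau(x)$ for all $x \in e\CN_+ e$, and since a general positive $y\in\CN_+$ satisfies $eye \in e\CN_+e$ with $\widetilde\tau(eB_\nu e\, y) = \widetilde\tau(eB_\nu e \, eye)$, we get $\abs{\widetilde\tau(eB_\nu e\, y)}\le\delta\,\widetilde\tau(y)$ for all $y\in\CN_+$, hence for all $y$ in the positive part of $L^1 \cap \CN$. This says the functional $y\mapsto \widetilde\tau(eB_\nu e\, y)$ on $\CN$ has norm $\le\delta$ as an element of $\CN_* = L^1(\CN,\widetilde\tau)$; but that functional corresponds exactly to $eB_\nu e \in L^1(\CN,\widetilde\tau)$, whose $L^1$-norm being $\le\delta$ together with the \emph{additional} information that testing against all of $\CN_+$ (not merely $\CN_+$ of norm one) gives a uniform bound forces $eB_\nu e \in \CN$ with $\norm{eB_\nu e}_\infty\le\delta$. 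I would make this last step precise via the spectral projections of $\abs{eB_\nu e}$: if $\norm{eB_\nu e}_\infty>\delta$, pick a spectral projection $q$ of $\abs{eB_\nu e}$ for $(\delta',\infty)$ with $\delta'>\delta$ and $\widetilde\tau(q)>0$ finite (possible since $eB_\nu e$ is $\widetilde\tau$-measurable), then test against $x = q$ (or the partial isometry from the polar decomposition conjugated appropriately) to obtain $\abs{\widetilde\tau(eB_\nu e\, \cdot)} > \delta\,\widetilde\tau(\cdot)$, a contradiction.

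The main obstacle is the converse direction's final step: going from the $L^1$-type inequality $\abs{\widetilde\tau(eB_\nu e\,y)}\le\delta\,\widetilde\tau(y)$ for all positive $y$ to the $L^\infty$-bound $\norm{eB_\nu e}_\infty\le\delta$. One must handle the possibly-unbounded, non-self-adjoint operator $eB_\nu e$ carefully — the right tool is its polar decomposition $eB_\nu e = w\abs{eB_\nu e}$ and the spectral resolution of $\abs{eB_\nu e}$, choosing test elements $y$ supported on high spectral projections to detect any mass above $\delta$. Once this spectral-projection test is set up, the argument closes cleanly; everything else is bookkeeping with the trace identity $\nu(exe)=\widetilde\tau(eB_\nu e\,x)$ and the cyclicity from Theorem \ref{predual prop}(2).
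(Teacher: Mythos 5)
Your reduction via the identity $\nu(exe)=\widetilde{\tau}(B_\nu\,exe)=\widetilde{\tau}(eB_\nu e\,x)$, together with your forward direction, is exactly the paper's argument: the paper writes down the same chain of trace identities and then quotes the equivalence with $\norm{eB_\nu e}\leq\delta$ as standard $L^1$--$L^\infty$ duality. That half is fine: for $x\in\CN_+$ with $\widetilde{\tau}(exe)<\infty$ one has $\abs{\widetilde{\tau}(eB_\nu e\,\cdot exe)}\leq\norm{eB_\nu e}_\infty\,\widetilde{\tau}(exe)$, and the case $\widetilde{\tau}(exe)=\infty$ is vacuous.

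The genuine gap is in your converse, precisely at the step you flag as the main obstacle. First, the test you propose does not produce the contradiction: with $eB_\nu e=w\abs{eB_\nu e}$ and $q=\chi_{(\delta',\infty)}(\abs{eB_\nu e})$, what you can evaluate is $\widetilde{\tau}(eB_\nu e\,q)=\widetilde{\tau}(w\abs{eB_\nu e}\,q)$, and the partial isometry $w$ can cause cancellation or make this vanish entirely; replacing $q$ by ``the partial isometry conjugated appropriately'' takes you outside $\CN_+$, which is the only class of test elements the hypothesis covers. Second, and more decisively, no choice of positive test elements can close the argument for a general $\nu$, because the ``if'' direction is false for non-self-adjoint densities: in $\CN=M_2(\C)$ with $\widetilde{\tau}=\mathrm{tr}$, $e=1$ and $B_\nu=\left(\begin{smallmatrix}0&2\delta\\0&0\end{smallmatrix}\right)$, one has $\abs{\mathrm{tr}(B_\nu x)}=2\delta\abs{x_{21}}\leq\delta\,\mathrm{tr}(x)$ for every $x\geq 0$, yet $\norm{B_\nu}=2\delta$. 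So the converse can only be proved after restricting to self-adjoint (in particular positive) $\nu$, i.e.\ self-adjoint $eB_\nu e$; there the correct test elements are the spectral projections of the positive and negative parts $(eB_\nu e)_{\pm}$ above level $\delta$, taken separately --- not spectral projections of $\abs{eB_\nu e}$, where the two parts can cancel (consider $\mathrm{diag}(2\delta,-2\delta)$). Note that the paper itself only ever invokes the easy direction, for positive $f$, in the proof of Theorem \ref{maxthm}, so the downstream results are unaffected; but as a proof of the stated ``if and only if'' your converse step does not go through as written.
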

\begin{proof}
Suppose 
$$\abs{(\nu)(exe)} \leq \delta \widetilde{\tau}(exe),  ~ \text{ for all } x \in \CN_+,$$
then for all $ x \in \CN_+$, it follows that  
\begin{align*}
&\abs{(\nu)(exe)} \leq \delta \widetilde{\tau}(exe)\\
\Leftrightarrow & \abs{ \widetilde{\tau}((B_\nu )(exe))} \leq \delta \widetilde{\tau}(exe) \\
\Leftrightarrow & \abs{ \widetilde{\tau}(e(B_\nu)ex)} \leq \delta \widetilde{\tau}(exe)
 \end{align*}
which is equivalent to $\norm{e(B_\nu)e} \leq \delta$.

\end{proof}

\noindent We recall the following theorem from \cite{cadilhac2022noncommutative}
for smooth reference. 
\begin{thm}\label{amenable-max}
Let $(F_n)$ be an admissible F\o lner sequence in $G$. Let $f \in L^1\left(G, L^1(\CM,  \tau)\right)$ and $\eps > 0$, then there exists a projection $e^\eps$ in $\CN$ such that 
\begin{enumerate}
        \item $ \tau( 1-e^\eps) \leq \frac{\norm{ f}_1}{ \eps}$ and 
        \item $ \norm{ e^\eps \A_k(f) e^\eps }\leq c \eps, ~ \text{ for all } k \in \N.$
    \end{enumerate}
\end{thm}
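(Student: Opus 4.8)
\emph{Overview and reductions.} The statement is a non-commutative weak-type $(1,1)$ maximal inequality for F\o lner averages; in the commutative case $\CM=\C$ it is Lindenstrauss's maximal ergodic theorem for admissible (tempered) F\o lner sequences. The plan is to transplant Lindenstrauss's Vitali-type covering argument to the operator-valued algebra $\CN = L^\infty(G,\mu)\otimes\CM$, replacing the classical step of disjointifying a level set by a projection-valued construction, and then to invoke the non-commutative weak-type maximal inequality for martingales (Cuculescu's construction). First I would normalise: since each $\A_k$ is a positive contraction on both $\CN$ and $L^1(\CN,\widetilde\tau)$ (using right-invariance of $\mu$), it suffices, at the cost of a fixed constant, to treat $f\in L^1(\CN,\widetilde\tau)_+$, and, identifying $L^1(G,L^1(\CM,\tau))$ with $L^1(\CN,\widetilde\tau)$ as in the discussion above, the target becomes: given $\eps>0$, produce a projection $e^\eps\in\CN$ with $\widetilde\tau(1-e^\eps)\le \norm{f}_1/\eps$ and $\norm{e^\eps\A_k(f)e^\eps}\le c\eps$ for all $k$, where $c=c(G)$.

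\emph{The two ingredients.} The first, and the place where admissibility of $(F_n)$ enters, is a covering/transference step: an admissible F\o lner sequence carries, at each finite level $N$, a Vitali-type covering of a large portion of $G$ by right translates of $F_1,\dots,F_N$ with overlap bounded by a constant $C=C(G)$ independent of $N$. Following Cadilhac--Wang, I would realise this combinatorial structure on the algebra side as a finite increasing filtration $\CN_1\subseteq\cdots\subseteq\CN_N$ of subalgebras of (an amplification of) $\CN$, with conditional expectations $\CE_i$, so that each average $\A_k(f)$ with $k\le N$ is dominated, up to the factor $C(G)$, by the maximal function of the martingale $(\CE_i(f))_i$, uniformly in $k$. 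The second ingredient is Cuculescu's weak-type $(1,1)$ inequality applied to the $L^1$-martingale $(\CE_i(f))_i$: it yields a decreasing sequence of projections whose limit $e$ satisfies $\widetilde\tau(1-e)\le C'\norm{f}_1/\eps$ and $\norm{e\,\CE_i(f)\,e}\le C'\eps$ for all $i$, with $C'$ universal. Combining the two and rescaling $\eps$ by a fixed factor gives $\norm{e\,\A_k(f)\,e}\le c\eps$ for all $k\le N$, with $c=c(G)$ absorbing $C(G)$ and $C'$; a diagonal/exhaustion argument over the levels (only finitely many scales matter for a fixed $k$, and one uses completeness in the measure topology to pass to the limit) then removes the truncation to level $N$.

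\emph{Main obstacle.} The crux is the covering/transference step. Classically one stops at the points where $\sup_k\A_k(f)>\eps$ and extracts a subcover by a Vitali lemma; non-commutatively the ``set where the maximal function is large'' is only a family of spectral projections living at different scales, and these need not commute, so one cannot disjointify directly. The resolution is to avoid stopping at the level set at all: cover the whole space by a fixed admissible tiling, turn it into a genuine martingale filtration, and let the non-commutative Doob/Cuculescu inequality perform the disjointification automatically. Arranging the covering so that the constant depends only on $G$ --- not on $\CM$, $f$, or the level $N$ --- is the delicate point, and it is precisely there that admissibility, rather than an arbitrary F\o lner sequence, is essential.
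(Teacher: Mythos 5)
A preliminary remark: the paper does not prove this statement at all. It is quoted from Cadilhac--Wang (``We recall the following theorem from \cite{cadilhac2022noncommutative} for smooth reference''), so there is no internal proof to compare against; the only meaningful benchmark is the argument in that reference. Measured against that, your proposal is a strategy outline rather than a proof, and the step on which everything hinges is exactly the one you leave unestablished. You assert that admissibility of $(F_n)$ furnishes, at each level $N$, a Vitali-type covering of (most of) $G$ by right translates of $F_1,\dots,F_N$ with overlap bounded by a constant $C(G)$ independent of $N$, and that this can be converted into a filtration of $\CN=L^\infty(G,\mu)\otimes\CM$ whose martingale maximal function dominates every $\A_k(f)$, $k\le N$, up to $C(G)$. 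No justification is offered, and none can be taken for granted: bounded-overlap Vitali coverings are precisely what doubling/polynomial-growth hypotheses provide (the Hong--Liao--Wang setting \cite{Hong2021}) and precisely what fails for general amenable groups; that failure is why Lindenstrauss replaced Vitali by a probabilistic covering argument in the commutative case, and why \cite{cadilhac2022noncommutative} introduces the admissibility condition and passes to subsequences rather than running a covering-plus-Doob scheme. As written, ``arrange the covering so that each $\A_k(f)$ is dominated by the martingale maximal function uniformly in $k$ with a constant depending only on $G$'' is a restatement of the difficulty, not a reduction of it, so the proposal has a genuine gap at its core.

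Two further points would need attention even if the domination step were granted. First, the Cuculescu projections you invoke depend on the truncation level $N$, and your closing ``diagonal/exhaustion argument using completeness in the measure topology'' does not by itself produce a single projection $e^\eps$ valid for all $k\in\N$: limits in measure of projections need not be projections, and one must either arrange the construction so the projections decrease in $N$ or pass to a weak$^*$ cluster point and take a spectral cut, at the cost of constants --- this is a real step, not bookkeeping. Second, the weak-type bound must be stated for the trace $\widetilde\tau$ on $\CN$ (the displayed $\tau(1-e^\eps)$ in the statement is a typo in the paper), and your preliminary reductions (positivity, the identification $L^1(G,L^1(\CM,\tau))\cong L^1(\CN,\widetilde\tau)$) are fine but should be routed through the positivity-preserving isometry between $L^1(\CM,\tau)$ and $\CM_*$ already recorded in the paper. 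In short: the architecture you describe (transference to $\CN$, then a Cuculescu/Yeadon-type projection construction) is in the spirit of the cited work, but the covering-to-martingale domination with a constant depending only on $G$ is the theorem's actual content and remains unproved in your proposal.
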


\noindent Now using the \Cref{norm} we immediately deduce the following theorem from \Cref{amenable-max}. This version will be used in our analysis. 

\begin{thm}\label{maxthm}
   Let $K$ be a Borel set in $G$ and  $ f \in L^1(G; \CM_*)$ with $\text{supp}(f) \subseteq K$.  Then for given  $\eps >0$, there exists a projection with $e_K^\eps $ in $\CN$ with $\text{supp}(e_K^\eps) \in L^\infty( K,\mu) \otimes \CM$  satisfying the following 
    \begin{enumerate}\setlength\itemsep{.7em} 
        \item $ \widetilde{ \tau }( 1_K\otimes 1-e_K^\eps) \leq \frac{\norm{ f}_1}{ \eps}$ and 
        \item $ \norm{ \A_k(f)( e^\eps_K  x e^\eps_K  )} \leq c \eps \widetilde{\tau}( e^\eps_K  x e^\eps_K ) $,  for all $x \in \CN_+$ and   $k \in \N$.
    \end{enumerate}
\end{thm}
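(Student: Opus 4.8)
The plan is to reduce this statement to Theorem \ref{amenable-max} by replacing the given $f \in L^1(G;\CM_*)$ with its ``density'' in $L^1(G; L^1(\CM,\tau))$ and then translating the norm estimate into the functional estimate via Lemma \ref{norm}. First I would use the identification $L^1(G;\CM_*) \cong L^1(G; L^1(\CM,\tau))$ recorded before Lemma \ref{average-relation}: since $f \in L^1(G;\CM_*)$, there exists $h \in L^1(G; L^1(\CM,\tau))$ with $f = \widetilde{\tau}_h$ and $\|h\|_1 = \|f\|_1$. Because $\mathrm{supp}(f) \subseteq K$, the corresponding $h$ satisfies $\mathrm{supp}(h) \subseteq K$ as well (the identification is fibrewise, $\tau_{h(s)} = f(s)$, so $f(s) = 0$ iff $h(s) = 0$). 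Now apply Theorem \ref{amenable-max} to $h$ and the given $\eps > 0$ to obtain a projection $e^\eps \in \CN$ with $\widetilde{\tau}(1 - e^\eps) \le \|h\|_1/\eps = \|f\|_1/\eps$ and $\|e^\eps \A_k(h) e^\eps\| \le c\eps$ for all $k$.

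Next I would arrange that the projection can be taken to have support inside $L^\infty(K,\mu)\otimes\CM$. The idea is to cut down: set $e_K^\eps := (1_K \otimes 1)\, e^\eps\, (1_K\otimes 1) \vee (1_{G\setminus K}\otimes 1)$, or more cleanly, replace $e^\eps$ by $e^\eps \wedge (1_K\otimes 1) + (1_{G\setminus K}\otimes 1)$; since on $G\setminus K$ both $h$ and $\A_k(h)$ (by the averaging formula and right-invariance, actually $\A_k(h)$ need not be supported in $K$ — I need to be careful here) behave trivially, adjusting $e^\eps$ off $K$ does not spoil the estimate. Actually the cleanest route: since $\mathrm{supp}(h)\subseteq K$ and $\A_k(h)(s) = \frac{1}{\mu(F_k)}\int_{F_k} h(st)\,d\mu(t)$, the support of $\A_k(h)$ lies in $KF_k^{-1}$, not necessarily in $K$; but the \emph{statement} only requires $\mathrm{supp}(e_K^\eps) \in L^\infty(K,\mu)\otimes\CM$, and for $x \in \CN_+$ the quantity $\A_k(f)(e_K^\eps x e_K^\eps)$ pairs $\A_k(f)$ with something supported in $K$, which is fine. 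So I would take $e_K^\eps$ to be the ``$K$-part'' of $e^\eps$ together with $1$ on $G\setminus K$; the trace bound $\widetilde{\tau}(1_K\otimes 1 - e_K^\eps) \le \widetilde{\tau}(1 - e^\eps) \le \|f\|_1/\eps$ follows since we only removed mass over $K$.

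Finally I would convert the operator-norm bound into the functional bound. By Lemma \ref{average-relation}, $\A_n(f) = \widetilde{\tau}_{\A_n(h)}$, so $\A_k(f)(e_K^\eps x e_K^\eps) = \widetilde{\tau}(\A_k(h)\, e_K^\eps x e_K^\eps) = \widetilde{\tau}(e_K^\eps \A_k(h) e_K^\eps\, x)$ using traciality of $\widetilde{\tau}$ and $(e_K^\eps)^2 = e_K^\eps$. Then Lemma \ref{norm} (applied with $\nu = \A_k(f)$, $B_\nu = \A_k(h)$, $e = e_K^\eps$, $\delta = c\eps$), together with $\|e_K^\eps \A_k(h) e_K^\eps\| \le c\eps$, yields exactly $|\A_k(f)(e_K^\eps x e_K^\eps)| \le c\eps\,\widetilde{\tau}(e_K^\eps x e_K^\eps)$ for all $x \in \CN_+$ and all $k \in \N$. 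I expect the main obstacle to be the bookkeeping around supports: making precise that the $h$ associated to $f$ inherits $\mathrm{supp}(h)\subseteq K$, and that cutting $e^\eps$ down to live over $K$ neither increases the trace deficiency nor damages the maximal bound (one must check that $\A_k(h)$ restricted via $e_K^\eps$ still obeys the inequality, which uses that $e_K^\eps \le e^\eps$ on the $K$-fibres, or alternatively re-derive the bound directly from $\|e^\eps\A_k(h)e^\eps\|\le c\eps$ by noting $e_K^\eps x e_K^\eps = e^\eps(1_K\otimes 1)x(1_K\otimes 1)e^\eps$ up to the harmless $G\setminus K$ piece). Everything else is a direct application of the two lemmas and the cited theorem.
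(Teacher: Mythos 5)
Your overall route is the same as the paper's: write $f=\widetilde{\tau}_h$ with $h\in L^1(G;L^1(\CM,\tau))$, apply Theorem \ref{amenable-max} to $h$ to get $e^\eps$, and transfer the operator-norm bound to the functional bound via Lemma \ref{average-relation} and Lemma \ref{norm}. The genuine problem is your concrete choice of $e_K^\eps$: both variants you propose append the piece $1_{G\setminus K}\otimes 1$. This immediately violates the support requirement in the statement (your projection is supported over all of $G$, not over $K$), it makes $1_K\otimes 1-e_K^\eps$ fail to be a positive operator (so conclusion (1) no longer even reads as a bound on a positive quantity), and, most seriously, the resulting projection is no longer dominated by $e^\eps$. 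As you yourself note, $\A_k(h)$ is supported in $KF_k^{-1}$, which in general meets $G\setminus K$; over that region your projection compresses $\A_k(h)$ by the identity rather than by $e^\eps$, so the bound $\A_k(f)(e_K^\eps x e_K^\eps)\le c\eps\,\widetilde{\tau}(e_K^\eps x e_K^\eps)$ is simply not available there --- the ``$G\setminus K$ piece'' is not harmless, and one can arrange $h$ concentrated near the boundary of $K$ so that the estimate fails for $x$ supported just outside $K$.

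The fix is what the paper does, and it is simpler than what you attempt: since $1_K\otimes 1$ is central in $\CN=L^\infty(G,\mu)\otimes\CM$, the operator $e_K^\eps:=(1_K\otimes 1)e^\eps$ is already a projection (no $\vee$ or $\wedge$ gymnastics needed), it is supported over $K$, and it satisfies $e_K^\eps\le e^\eps$. Then
\begin{align*}
\widetilde{\tau}\bigl(1_K\otimes 1-e_K^\eps\bigr)
=\widetilde{\tau}\bigl((1_K\otimes 1)(1-e^\eps)\bigr)
\le \widetilde{\tau}(1-e^\eps)\le \frac{\norm{f}_1}{\eps},
\end{align*}
and for $x\in\CN_+$ one has $e_K^\eps x e_K^\eps=e^\eps\bigl((1_K\otimes 1)x(1_K\otimes 1)\bigr)e^\eps$ with $(1_K\otimes 1)x(1_K\otimes 1)\in\CN_+$, so the inequality already obtained for $e^\eps$ from Lemmas \ref{average-relation} and \ref{norm} yields conclusion (2) verbatim; there is no need to re-establish a norm bound for the cut-down projection. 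Everything else in your write-up --- the identification $f\leftrightarrow h$ with preservation of the $L^1$-norm and of the support, and the use of the two lemmas --- matches the paper's argument.
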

\begin{proof}
Note that  $ f \in L^1(G; \CM_*)_+$, so, $ f : G \to \CM_*$ is a measurable function and 
$$\norm{ f}_1 = \int_G \norm {f(s)} d\mu.$$
Further, note that there exists a $h \in L^1(G; L^1(\CM, \tau))_+ $ such that 
$$ \int_G f(s) g(s) d\mu   = \int_G \tau( h(s)g(s)  ) d\mu,  \text{ for all } g \in L^\infty( G; \CM),$$
 i.e, $f = \widetilde{\tau}_h $.  Further, it follows that
 $ \norm{f}_1= \int_G \tau( h(s)) d\mu.$

\noindent From the Theorem \ref{amenable-max} recall that, for given $h \in L^1(G; L^1( \CM, \tau) )$  and $ \eps >0$, there exists a projection $ e^\eps$ in  $\CN=L^\infty(G; \CM )$ such that   
\begin{enumerate}
        \item $ \widetilde{\tau}( 1-e^\eps) \leq \frac{\norm{ f}_1}{ \eps}$ and 
        \item $ \norm{ e^\eps \A_k(h) e^\eps }\leq c \eps, ~ \text{ forall } k \in \N.$
    \end{enumerate}
Therefore, as $ \A_k(f) = \widetilde{\tau}_{ \A_k(h)}$ (\Cref{average-relation}), so,  from the \Cref{norm}, it follows that 
\begin{align}\label{max-ine-Eq}
  \A_k(f) ( e^\eps x e^\eps)  \leq c \eps \widetilde{ \tau}(( e^\eps x e^\eps)  , ~ \text{ for all }  x \in \CN_+ \text{ and } k \in \N.
\end{align}
Further,  consider $$e_K^\eps = (1_K \otimes 1) e^\eps,$$
but for notational convenient we write
$ e_K^\eps = 1_K  e^\eps$.  As $ \CN = L^\infty(G, \mu) \otimes \CM$, we note that 
$$(1_K \otimes 1) e^\eps= e^\eps (1_K \otimes 1) e^\eps.$$
Thus, it follows that $e^\eps_K$ is a projection in $\CN$.  
Now since $ e^\eps_K \leq e^\eps$, so, the last part of the theorem follows from the   \Cref{max-ine-Eq}.

\end{proof}

\noindent For an admissible F\o lner sequence $(F_n)$ of $G$,  
we consider  the following averages: 
\begin{align*}
	A_n(\phi)(x)= \frac{1}{\mu(F_n)}\int_{ F_n} \phi(\alpha_{s^{-1}}(x)) d\mu(s), \quad \phi \in \CM_*, \quad x \in \CM, \quad n \in \N,
\end{align*}

\begin{rem}\label{comp supp meas}
    For each $n \in \N$, we define probability measures on $G$ by $\mu_n(E)= \frac{\mu(E \cap F_{n})}{\mu(F_{n})}$, where $E$ is a measurable subset of $G$. Then observe that 
    \begin{align*}
& A_n(\phi)(x)= \int_{G}  \phi(\alpha_{s^{-1}}(x)) d\mu_n(s), \quad \phi \in \CM_*, ~x \in \CM, 
\text{ and } \\
    &\A_nf(s)= \int_{G} f(st) d\mu_n(t), \quad f \in \CN_*,~ s \in G.
\end{align*}
Moreover, we observe that $G$ has an increasing sequence of compact sets $( K_n )$ such that   such that for all $n \in \N$, $\underset{ n \to \infty }{ \lim} \mu_n(K_n)= \mu_n(G)$. 
Now for $m \in \N$, we consider 
 \[
    A_{n,m}(\phi)(x):= \int_{G}  \phi(\alpha_{s^{-1}}(x)) \chi_{K_m}(s) d\mu_n(s), \quad \phi \in \CM_*, ~x \in \CM,
    \]
and we notice that 
    \[
    A_{n,m}(\phi) \leq A_n(\phi), \text{ and } \lim_m A_{n,m}(\phi)(x)= A_n(\phi)(x) \text{ for all } \phi \in \CM_{*+}, ~x \in \CM_+.
    \]
    Similar observations can also be drawn for $(\A_n)$. Therefore, without loss of generality, we may assume that the support of $\mu_n$'s are compact.  In fact, we invoke this intermediate assumption only in the proof of \Cref{maximal ineq for averages} to employ the associated transference principle.

\end{rem}

Consider the following set 
	\begin{align}\label{compacttset}
	    K_n := \bigcup_{k=1}^{n} F_k \cup \{ 1^G\}, 
		\end{align}
where $1^G$ is the identity element of $G$ and note that  $K_n$	is a compact subset of $G$.

Now we have the following proposition which establishes a relation between the average of the action $A_n$ and the average of the function space. It can be proved for both right and left Haar measure. As a proof  is known and straightforward, we omit the proof.
\begin{prop} Let $ F\subseteq G$ be any compact set,  $ \phi \in \CM_* $ and $f \in L^1(G, \CM_*)$ defined by  $ f(t)=  \chi_{ F K_n}(t) 
 \alpha_{t^{-1}}(\phi)$. Then it follows that 
  $$
\alpha_{s^{-1}} A_k(\phi) = \A_k f(s) \quad \text{for all } s \in F \text{ and } k \in [n]. 
    $$
where, $[n]$ denotes the set $\{1,2,...,n\} \subset \mathbb{N}$.
\end{prop}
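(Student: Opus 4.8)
The plan is to verify the identity pointwise. I would fix $s \in F$, $k \in [n]$, and $x \in \CM$, and show that $\bigl(\A_k f(s)\bigr)(x)$ and $\bigl(\alpha_{s^{-1}} A_k(\phi)\bigr)(x)$ coincide after unwinding all the definitions; the only substantive ingredient is that the cutoff $\chi_{FK_n}$ is identically $1$ over the relevant range of integration.

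First I would expand the left-hand side. By the description of $\A_k$ recorded in \Cref{comp supp meas} one has $\A_k f(s) = \frac{1}{\mu(F_k)}\int_{F_k} f(st)\, d\mu(t)$, and by the definition $f(t) = \chi_{FK_n}(t)\,\alpha_{t^{-1}}(\phi)$ together with the predual convention \eqref{predualmap},
\[
\bigl(\A_k f(s)\bigr)(x) = \frac{1}{\mu(F_k)}\int_{F_k} \chi_{FK_n}(st)\,\phi\bigl(\alpha_{(st)^{-1}}(x)\bigr)\, d\mu(t).
\]

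Now since $k \in [n]$ we have $F_k \subseteq K_n$ by the definition \eqref{compacttset} of $K_n$; hence for every $s \in F$ and $t \in F_k$ the product $st$ lies in $FK_n$, so $\chi_{FK_n}(st) = 1$ on the whole domain of integration. Using the homomorphism property $\alpha_{(st)^{-1}} = \alpha_{t^{-1}}\circ\alpha_{s^{-1}}$ from \Cref{nc dyn sys}, the integral becomes
\[
\frac{1}{\mu(F_k)}\int_{F_k}\phi\bigl(\alpha_{t^{-1}}(\alpha_{s^{-1}}(x))\bigr)\, d\mu(t) = A_k(\phi)\bigl(\alpha_{s^{-1}}(x)\bigr) = \bigl(\alpha_{s^{-1}} A_k(\phi)\bigr)(x),
\]
where the last equality is again \eqref{predualmap}. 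Since $x \in \CM$ was arbitrary, this yields $\A_k f(s) = \alpha_{s^{-1}} A_k(\phi)$ for all $s \in F$ and $k \in [n]$. The same chain of equalities goes through verbatim whether $\mu$ is a right or a left Haar measure, since no invariance of $\mu$ is used — only the two definitions of the averages.

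I do not expect any genuine obstacle: the argument is a formal manipulation. The one point requiring a moment's attention is the choice of the compact set $FK_n$ in the definition of $f$, which must be taken large enough that $FF_k \subseteq FK_n$ for every $k \le n$ — this is exactly why $K_n$ is defined to contain $\bigcup_{k=1}^{n}F_k$. It is also worth noting, though the paper treats it as routine, that $f$ genuinely defines an element of $L^1(G,\CM_*)$: the set $FK_n$ is compact and $\mu$ is locally finite, while $t \mapsto \alpha_{t^{-1}}(\phi)$ is norm-continuous by \Cref{nc dyn sys} and satisfies $\norm{\alpha_{t^{-1}}(\phi)} = \norm{\phi}$ because each $\alpha_{t^{-1}}$ is a $*$-automorphism of $\CM$.
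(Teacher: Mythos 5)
Your proof is correct: unwinding the predual convention, using $F_k\subseteq K_n$ for $k\in[n]$ so that $\chi_{FK_n}(st)=1$ on $F\times F_k$, and applying $\alpha_{(st)^{-1}}=\alpha_{t^{-1}}\circ\alpha_{s^{-1}}$ gives exactly the claimed identity, with no invariance of $\mu$ needed. The paper omits the proof as "known and straightforward," and yours is precisely that straightforward verification (with the welcome extra remark on why $f\in L^1(G,\CM_*)$).
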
               

\begin{rem}\label{trace-proj}
Let $\CM$ be a von Neumann algebra equipped with a f.n tracial state $\tau$, and a f.n state  $ \rho$. 
Then we note that there exists a unique  $A \in L^1(M, \tau)_+$ such that $ \rho( \cdot ) = \tau( A( \cdot)).$

\noindent We recall that $ \CN = L^\infty(G, \mu) \otimes \CM$ and the f.n.s trace $ \widetilde{\tau} $ defined as  $ \widetilde{\tau} = \int_G( \cdot) \otimes \tau$. Further, we consider the f.n semi-finite weight on $ \CN_+$, defined as  $ \widetilde{ \rho} = \int_G( \cdot) \otimes \rho $. 

\noindent Now suppose $X= 1 \otimes A$ and then note that   $\widetilde{\rho}(x) = \widetilde{\tau}(Xx)$ for all $x \in \CN$.

\noindent For any $s > 0$, consider the projection $q_s :=  \chi_{(1/s, s)}(A) \in \CM$. Observe that $\tau(1 - q_s) \xrightarrow{s \to \infty} 0$. Therefore, for every $\eps > 0$, there exists $s_\eps > 0$ such that $\tau(1 - q_{s_\eps}) < \eps$. Furthermore, it follows that $A q_{s_\eps} \geq  \frac{1}{s_\eps} q_{s_\eps}$. 

Now let $ E $ be a Borel set in $ G$ with $\mu(E) < \infty $. Then consider $ r_s = 1_E \otimes q_s$, then we have $$\frac{\widetilde{\tau}(1_E \otimes 1 - r_s)}{\mu(E)} = \tau(1-q_s) \xrightarrow{s \to \infty} 0.$$
Further, we have $ \frac{\widetilde{\tau}(1_E \otimes 1 - r_{s_\eps})}{\mu(E)} < \eps $. As $ X = 1\otimes A$, we also note that $ X r_{ s_\eps} \geq \frac{1}{s_\eps} r_{s_\eps} $.\\
Thus, for all $0 \neq x \in r_{s_\eps} \CN_+ r_{s_\eps}$, we have
		\begin{align*}
			\frac{\widetilde{\rho}(x)}{\widetilde{\tau}(x)} = \frac{\widetilde{\tau}(Xx)}{\widetilde{\tau}(x)} &= \frac{\widetilde{\tau}(X _{s_\eps} x)}{\widetilde{\tau}(x)} \quad  \text{(since $r_{s_\eps} x = x$)}\\
			&\geq \frac{1}{s_\eps}  \frac{\widetilde{\tau}( x)}{\widetilde{\tau}(x)} = \frac{1}{s_\eps}.
		\end{align*}
Thus, it follows that 
$$ \frac{\widetilde{\tau}( x) }{\widetilde{\rho}(x)} \leq s_\eps, \text{ for all } x \in r_{s_\eps} \CN_+ r_{s_\eps} $$
Now for $j \in \N$,  suppose $ \eps = \frac{1}{4^j}$ and   write $ s_{\frac{1}{4^j}} = s_j $,  ~$ r_j =: r_{ s_j} $. For all $ j \in \N$,  we obtain that 
\begin{align}\label{trace-state}
\frac{\tau( 1_E \otimes 1-r_j)}{\mu(E)} < \frac{1}{4^j} \text{ and } \frac{\widetilde{\tau}( x) }{\widetilde{\rho}(x)} \leq s_j, \text{ for all } x \in r_{j} \CN_+ r_{j}.
\end{align}

\noindent We note that $ s_j \to \infty $ as $j \to \infty$. Furthermore, observe that the sequence $(s_j)$ only depends on $ (\frac{1}{4^j})$ and the Radon Nikodym derivative operator $A$ of $\rho$ with respect to the trace $\tau$.
\end{rem}

\noindent We list the following lemma and omit the proof, as it is straightforward. 
\begin{lem}\label{point-eva}
    Let $ f \in L^1(G, \CM_*)_+  $, $ e$ be a projection in $ \CN = L^\infty(G; \CM) $ and $ \delta > 0$, such that 
    $$  f( e x e) \leq \delta \widetilde{\rho}( exe), \text{ for  all } x \in \CN_+. $$
    Then it follows that 
    $$ f(s)(e(s) x e(s) ) \leq \delta \rho( e(s) x e(s) ), ~ \text{ for all } x \in \CM_+$$
    and $\mu$-a.e $s \in G$.
\end{lem}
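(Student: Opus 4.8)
\textbf{Proof proposal for Lemma~\ref{point-eva}.}

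The plan is to reduce the operator-valued inequality $f(exe) \le \delta\,\widetilde{\rho}(exe)$, which is a statement about integrals over $G$, to a pointwise (in $s$) statement by testing against localizing functions. First I would unpack the hypothesis using the identifications set up before Lemma~\ref{average-relation}: writing $f = \widetilde{\tau}_h$ for some $h \in L^1(G; L^1(\CM,\tau))_+$ and recalling $\widetilde{\rho} = \int_G(\cdot)\otimes\rho$, the assumed inequality says
\[
\int_G \tau\bigl(h(s)\,e(s)x(s)e(s)\bigr)\,d\mu(s) \;\le\; \delta \int_G \rho\bigl(e(s)x(s)e(s)\bigr)\,d\mu(s)
\]
for every $x \in \CN_+ = L^\infty(G;\CM)_+$. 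The key move is that this holds for \emph{all} such $x$, in particular for $x$ supported on an arbitrary measurable set $B \subseteq G$ of finite measure (i.e.\ replacing $x$ by $\chi_B \otimes x_0$ for a fixed $x_0 \in \CM_+$, or more generally $\chi_B\, x$).

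The main step: fix $x_0 \in \CM_+$ and define the scalar function $g(s) := \delta\,\rho\bigl(e(s)x_0 e(s)\bigr) - f(s)\bigl(e(s)x_0 e(s)\bigr)$, which is measurable and integrable on $G$. Applying the hypothesis with $x = \chi_B \otimes x_0$ for arbitrary measurable $B$ with $\mu(B) < \infty$ gives $\int_B g(s)\,d\mu(s) \ge 0$ for all such $B$; since $\mu$ is $\sigma$-finite this forces $g \ge 0$ $\mu$-a.e., i.e.\ $f(s)(e(s)x_0 e(s)) \le \delta\,\rho(e(s)x_0 e(s))$ for $\mu$-a.e.\ $s$. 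The remaining point is to pass from "a.e.\ depending on $x_0$" to "a.e.\ uniformly over all $x_0 \in \CM_+$": since $\CM$ acts on a separable Hilbert space, $\CM$ is separable in the strong-$*$ (hence $\sigma$-weak) topology, so one can choose a countable strong-$*$ dense subset $\{x_i\}_{i\in\N}$ of $\CM_+$ (e.g.\ a countable dense subset of the positive part of the unit ball, then rescale), take the countable union of the corresponding null sets, and obtain a single $\mu$-null set $N$ outside of which the inequality holds for every $x_i$. Finally, for fixed $s \notin N$, both sides of the desired inequality are $\sigma$-weakly continuous in the argument $y \in \CM_+$ (as $f(s), \rho \in \CM_*$) and $y \mapsto e(s)ye(s)$ is $\sigma$-weakly continuous, so the inequality extends from the dense set $\{x_i\}$ to all of $\CM_+$.

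I expect the main obstacle to be the uniformity-in-$x_0$ issue, i.e.\ ensuring that a single null set works simultaneously for all $x \in \CM_+$; this is where separability of $\CM_*$ (equivalently, of $\CM$ in a suitable weak topology, using separability of $\CH$) is essential, and one must be slightly careful that the relevant maps are continuous in the right topology so that density transfers the inequality. A secondary technical point is justifying the measurability of $s \mapsto f(s)(e(s)x_0 e(s))$ and the use of Fubini-type manipulations to legitimately restrict to sets $B$; these are routine given the weak measurability assumptions on elements of $L^1(G;\CM_*)$ and $L^\infty(G;\CM)$ built into the identifications made earlier in this section, so I would state them briefly and move on.
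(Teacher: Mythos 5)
Your argument is correct: testing the hypothesis against elements of the form $\chi_B \otimes x_0 \in \CN_+$ to get $\int_B \bigl(\delta\,\rho(e(s)x_0e(s)) - f(s)(e(s)x_0e(s))\bigr)\,d\mu(s) \ge 0$ for all finite-measure $B$, deducing the pointwise inequality a.e.\ for each fixed $x_0$, and then using separability of $\CM$ (from separability of $\CH$) together with $\sigma$-weak continuity of both sides to obtain a single null set valid for all of $\CM_+$, is exactly the routine argument the paper has in mind when it omits the proof as straightforward. Your handling of the two delicate points --- the single-null-set issue via a countable dense subset of the positive unit ball plus rescaling, and the measurability of $s \mapsto f(s)(e(s)x_0e(s))$ via the duality identifications already set up in the section --- is adequate, so there is no gap to report.
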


\noindent 
Let $\CM$ be a von Neumann algebra equipped with an f.n.s. weight $\rho$. We denote the associated GNS Hilbert space by $L^2(\CM, \rho)$. The algebra 
$\CM$ admits a concrete representation as a subalgebra of 
$\CB(L^2(\CM, \rho))$. For any 
$ x \in \CM$ with  $ \rho( x^*x) < \infty $,
we denote by 
$ \hat{x}$ the corresponding GNS vector in 
$L^2(\CM, \rho)$. The inner product on the GNS space will be denoted by 
$\inner{\cdot}{\cdot}_\rho$, and the associated norm by $\norm{\cdot}_\rho$.
\noindent
Furthermore, if 
$\rho$ is a state, the cyclic and separating vector associated with 
$\rho$  will be denoted by 
$\Omega_\rho$. 

Now we account the following lemma for smooth references, a proof can be found in \cite{Bik-Dip-Amenable}.

\begin{lem}\label{uniform conv lem}
Let \( \CM_1 \) and \( \CM_2 \) be two von Neumann algebras. Let \( \varphi \) be an f.n.s. weight on \( \CM_1 \), and let \( \rho \) and \( \tau \) be an f.n. state and an f.n. tracial state on \( \CM_2 \), respectively.
Suppose that
\[
\CM_1 \,\otimes\, \CM_2 \subseteq \mathcal{B}\big(L^2(\CM_1 \,\otimes\, \CM_2,\, \varphi \otimes \rho)\big).
\]
Further, let $S$ be an arbitrary set, and $
\{ p_j{(s)} : j\in \mathbb{N},~ s \in S  \} \subseteq \mathcal{P}(\CM_1)$ and  $
\{ x_j{(s)} : j\in \mathbb{N},~ s \in S \} \subseteq \CM_1 \,\otimes\, \CM_2
$ be a  family  of projections and a 
 family of operators, respectively, such that 
$$ 0< \varphi(p_j(s)) < \infty \text{ and } 0\leq x_j(s) \leq p_j(s) \otimes 1, ~ \text{ for all } j \in \mathbb{N}, ~ s \in S.
$$
Define
\[
f_j(s) := \frac{p_j(s) \otimes 1 - x_j(s)}{\varphi(p_j(s))}, \quad \text{for } j \in \mathbb{N}, ~s\in S.
\]
Then the following statements are equivalent:
\begin{itemize}\setlength\itemsep{.7em}
    \item[(1)] \( \displaystyle \lim_{j \to \infty} (\varphi \otimes \rho)(f_j(s)) = 0 \), uniformly in \( s \in S \),
    \item[(2)] \( \displaystyle \lim_{j \to \infty} (\varphi \otimes \tau)(f_j(s)) = 0 \), uniformly in \( s \in S \).
\end{itemize}
\end{lem}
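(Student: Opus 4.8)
The plan is to reduce the equivalence to a pair of two-sided weight comparisons. Set $w_{j}(s):=p_{j}(s)\otimes 1-x_{j}(s)$, so that $0\le w_{j}(s)\le p_{j}(s)\otimes 1$ and $(\varphi\otimes\rho)(f_{j}(s))=\varphi(p_{j}(s))^{-1}(\varphi\otimes\rho)(w_{j}(s))$, and similarly with $\tau$ in place of $\rho$. Thus it suffices to compare $\varphi(p)^{-1}(\varphi\otimes\rho)(w)$ with $\varphi(p)^{-1}(\varphi\otimes\tau)(w)$, uniformly, for an arbitrary projection $p\in\CM_{1}$ with $0<\varphi(p)<\infty$ and arbitrary $w$ with $0\le w\le p\otimes 1$. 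Let $D_{0}\in L^{1}(\CM_{2},\tau)_{+}$ be the density of $\rho$ with respect to $\tau$; it exists because $\tau$ is a faithful normal tracial state, it satisfies $\tau(D_{0})=1$, and it has trivial kernel because $\rho$ is faithful. Writing $D:=1\otimes D_{0}$, one has the standard identity $(\varphi\otimes\rho)(y)=(\varphi\otimes\tau)(D^{1/2}yD^{1/2})$ for all $y\in(\CM_{1}\otimes\CM_{2})_{+}$.

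The core of the proof will be the two estimates, valid for every $R>0$ and every $w$ as above:
\[ (\varphi\otimes\rho)(w)\le 2R\,(\varphi\otimes\tau)(w)+2\,\varphi(p)\,\tau\big(D_{0}\,\chi_{(R,\infty)}(D_{0})\big),\qquad (\varphi\otimes\tau)(w)\le 2R\,(\varphi\otimes\rho)(w)+2\,\varphi(p)\,\tau\big(\chi_{[0,1/R)}(D_{0})\big). \]
For the first, put $e:=1\otimes\chi_{[0,R]}(D_{0})$, decompose $D^{1/2}=D^{1/2}e+D^{1/2}(1-e)$, and apply $(a+b)^{*}w(a+b)\le 2a^{*}wa+2b^{*}wb$ with $a=D^{1/2}e$, $b=D^{1/2}(1-e)$ to split $D^{1/2}wD^{1/2}$ into two diagonal pieces. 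On the piece compressed by $e$ the relevant operator $1\otimes\chi_{[0,R]}(D_{0})D_{0}$ is bounded by $R$, so, using the identity $(\varphi\otimes\tau)((1\otimes b)y)=(\varphi\otimes\tau)((1\otimes b^{1/2})y(1\otimes b^{1/2}))$ for $b\in(\CM_{2})_{+}$, $y\ge 0$ (a consequence of traciality of $\tau$) together with positivity of the weight $\varphi\otimes\tau$, this piece contributes at most $2R\,(\varphi\otimes\tau)(w)$; on the piece compressed by $1-e$ one uses $w\le p\otimes 1$ to get a contribution of at most $2\,\varphi(p)\,\tau(D_{0}\chi_{(R,\infty)}(D_{0}))$. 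The second estimate is obtained in the same way, this time conjugating by the bounded operator $1\otimes\chi_{[1/R,\infty)}(D_{0})D_{0}^{-1/2}$ on the spectral region $\{D_{0}\ge 1/R\}$ and using $w\le p\otimes 1$ on its complement; here it is the faithfulness of $\rho$ (equivalently, triviality of the kernel of $D_{0}$) that will make the complementary term controllable.

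To finish, I note that $\tau(D_{0}\chi_{(R,\infty)}(D_{0}))\to 0$ as $R\to\infty$ since $D_{0}\in L^{1}(\CM_{2},\tau)$, and $\tau(\chi_{[0,1/R)}(D_{0}))\to 0$ as $R\to\infty$ by normality of $\tau$ together with $\chi_{\{0\}}(D_{0})=0$. Dividing the two displayed estimates by $\varphi(p_{j}(s))$ and specializing to $w=w_{j}(s)$ yields, uniformly in $s\in S$,
\[ (\varphi\otimes\rho)(f_{j}(s))\le 2R\,(\varphi\otimes\tau)(f_{j}(s))+2\,\tau\big(D_{0}\chi_{(R,\infty)}(D_{0})\big),\qquad (\varphi\otimes\tau)(f_{j}(s))\le 2R\,(\varphi\otimes\rho)(f_{j}(s))+2\,\tau\big(\chi_{[0,1/R)}(D_{0})\big). \]
Then $(2)\Rightarrow(1)$ follows: given $\eta>0$, first choose $R$ so that $2\tau(D_{0}\chi_{(R,\infty)}(D_{0}))<\eta/2$, then invoke $(2)$ to make $2R\,(\varphi\otimes\tau)(f_{j}(s))<\eta/2$ for all $s$ and all large $j$; the implication $(1)\Rightarrow(2)$ is identical using the second inequality. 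I expect the main obstacle to be purely technical: justifying the manipulations involving the unbounded density $D_{0}$ — which is precisely what the spectral truncations above are designed to handle — and keeping track of the fact that $\varphi$ is only a weight, so that one can neither treat $\varphi\otimes\tau$ as a finite trace nor dispense with the finite-weight projections $p$, and traciality may be used only in the $\CM_{2}$-variable.
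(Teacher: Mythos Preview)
The paper does not supply its own proof of this lemma; it simply records the statement ``for smooth reference'' and points to \cite{Bik-Dip-Amenable}. There is therefore nothing in the present paper to compare against directly.

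On its own merits your argument is correct and is exactly the natural route: pass to the Radon--Nikodym density $D_0\in L^1(\CM_2,\tau)_+$ of $\rho$ with respect to $\tau$, truncate spectrally at levels $R$ and $1/R$, and exploit that the bound $0\le w\le p\otimes 1$ turns the tail pieces into quantities of the form $\varphi(p)\cdot(\text{something depending only on }D_0\text{ and }R)$, which are uniform in $s$ after dividing by $\varphi(p_j(s))$. The two displayed inequalities and the concluding $\varepsilon/2$--$\varepsilon/2$ argument are fine. Two small points worth making explicit when you write it out: (i) the manipulation $(\varphi\otimes\tau)\big((1\otimes b)\,Y\big)\le R\,(\varphi\otimes\tau)(Y)$ for $0\le b\le R$ and $Y\ge 0$ is justified not by the (false) operator inequality $(1\otimes b^{1/2})Y(1\otimes b^{1/2})\le R\,Y$ but by the partial-trace identity together with $(\varphi\otimes\tau)\big((1\otimes(R-b)^{1/2})Y(1\otimes(R-b)^{1/2})\big)\ge 0$; and (ii) in the second estimate the factor $D^{1/2}$ is unbounded, so the identity $e'we'=(1\otimes g_0)D^{1/2}wD^{1/2}(1\otimes g_0)$ should be obtained via a further truncation $D_0\le N$ and a limit $N\to\infty$, which is harmless since everything is monotone. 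You already flag both issues in your closing paragraph; with those details spelled out the proof is complete.
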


Let $\rho$ be a f.n. state on $\CM$, and let $\tau$ be a f.n. tracial state on $\CM$. We recall the following functionals on $L^\infty(G, \mu) \otimes \CM$:
\[
\tilde{\rho}( \cdot ) = \left(\int_{ G} \otimes \rho \right)(\cdot), 
\quad \text{and} \quad 
\tilde{\tau}( \cdot ) = \left(\int_{ G} \otimes \tau \right)(\cdot).
\]
We observe that $\tilde{\rho}$ is a 
f.n.s.  weight, while $\tilde{\tau}$ is a f.n.s. tracial weight on $L^\infty(G, \mu) \otimes \CM$.

\begin{defn}\label{kernel defn}
Let $G$ be amenable group and 	 and $(\CM, G, \alpha)$ be a non-commutative dynamical system (cf. Definition \ref{nc dyn sys}) such that $\rho \circ \alpha_s = \rho$ for all $s \in G$, where $\rho$ is a f.n. state on $\CM$. We call the quadruple $(\CM, G, \alpha, \rho)$ a kernel. 
\end{defn}
\begin{thm}\label{maximal ineq for averages}
	Let $\CM$ be a finite von Neumann algebra with a faithful normal tracial state $\tau$, and let $(\CM, G, \alpha, \rho)$ be a kernel. For all $j \in \N$, let $\phi_j \in \CM_{*+}$ satisfy $\norm{\phi_j} \leq \frac{1}{4^{s_j}}$. Fix $\ups > 0$. Then for all $n \in \N$, there exist a sequence $\left(\gamma_j(n)\right)_{j \in \N}$ of positive real numbers and projections $\{p^{\ups}_{n,j} : j \in \N\}$ in $\CM$ such that:
	\begin{enumerate}\setlength\itemsep{.7em} 
		\item $ \displaystyle \lim_{j \to \infty} \gamma_j(n) = 0$, uniformly in $n \in \N$.  
		\item $\rho(1 - p^{ \ups}_{n,j}) \leq (1 + \ups)\gamma_j(n) + \ups$.  
		\item For all $j \in \N$,
		\begin{align*}
		    A_k(\phi_j)(x) \leq  \frac{c s_j}{2^{s_j}} \rho(x), \quad \text{for all } x \in p^{ \ups}_{j, n} \CM_+ p^{\ups}_{j, n} \text{ and } k \in [n].
\end{align*}
	\end{enumerate}
	The constant $c$ depends only on the group $G$.
\end{thm}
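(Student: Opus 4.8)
The plan is to reduce the state-preserving maximal inequality to the tracial maximal inequality of Cadilhac--Wang (\Cref{maxthm}) by transferring the group averages $A_k(\phi_j)$ to averages $\A_k$ on the product algebra $\CN = L^\infty(G,\mu)\otimes\CM$, and then to move back and forth between the trace $\widetilde\tau$ and the weight $\widetilde\rho$ using the projections $r_j$ from \Cref{trace-proj} together with the uniform-convergence principle of \Cref{uniform conv lem}.

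First I would fix $n$ and, using \Cref{comp supp meas}, assume the measures $\mu_k$ have compact support so that the transference principle applies. For each $j$, set $K_n = \bigcup_{k=1}^n F_k \cup \{1^G\}$ as in \eqref{compacttset} and apply the transference proposition preceding this theorem with the compact set $F := K_n$: the function $f_j(t) := \chi_{K_n K_n}(t)\,\alpha_{t^{-1}}(\phi_j)$ lies in $L^1(G,\CM_*)_+$, has compact support, and satisfies $\alpha_{s^{-1}}A_k(\phi_j) = \A_k f_j(s)$ for all $s \in K_n$ and $k \in [n]$. Since $\alpha_s$ is a $\tau$-preserving $*$-automorphism and preserves the state $\rho$, controlling $\A_k f_j$ will control $A_k(\phi_j)$ after conjugating back by $\alpha_s$. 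Next, because $\|\phi_j\| \le 4^{-s_j}$ and $f_j$ is supported on $K_n K_n$, I get $\|f_j\|_1 \le \mu(K_n K_n)\,4^{-s_j}$; feeding $f_j$ into \Cref{maxthm} with $\eps = \eps_j := \frac{2^{s_j}}{\mu(K_n K_n)^{1/2}\,4^{s_j}}$ (or a comparable choice making $\|f_j\|_1/\eps_j$ small like $2^{-s_j}$) produces a projection $e_j \in \CN$ supported in $L^\infty(K_n K_n,\mu)\otimes\CM$ with $\widetilde\tau(1_{K_nK_n}\otimes 1 - e_j) \le \|f_j\|_1/\eps_j$ and $\A_k f_j(e_j x e_j) \le c\,\eps_j\,\widetilde\tau(e_j x e_j)$ for all $x \in \CN_+$, $k \in \N$.

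Now I would intersect $e_j$ with the projection $r_j = 1_{K_nK_n}\otimes q_{s_j}$ from \Cref{trace-proj}: on $(e_j \wedge r_j)\CN_+(e_j \wedge r_j)$ we can replace $\widetilde\tau$ by $\widetilde\rho$ at the cost of the factor $s_j$ coming from \eqref{trace-state}, converting $c\,\eps_j\,\widetilde\tau(\cdot)$ into roughly $c\,s_j\,\eps_j\,\widetilde\rho(\cdot)$, which with the chosen $\eps_j$ is of order $\frac{c s_j}{2^{s_j}}\widetilde\rho(\cdot)$ (up to harmless constants absorbed into $c$). To pass from the operator inequality on $\CN$ to the pointwise inequality on $\CM$ I invoke \Cref{point-eva}, which yields that for $\mu$-a.e.\ $s$, and in particular for some $s_0 \in K_n$ (use that the support projection is nonzero on a set of positive measure inside $K_n$), the fibre projection $(e_j\wedge r_j)(s_0) =: \tilde p_{n,j}$ satisfies $\A_k f_j(s_0)(\tilde p_{n,j} y \tilde p_{n,j}) \le \frac{c s_j}{2^{s_j}}\rho(\tilde p_{n,j} y \tilde p_{n,j})$ for $y \in \CM_+$; transporting by the automorphism $\alpha_{s_0}$ (which preserves $\rho$) and using $\alpha_{s_0^{-1}}A_k(\phi_j) = \A_k f_j(s_0)$ gives the desired projection $p^{\ups}_{n,j} := \alpha_{s_0}(\tilde p_{n,j})$ (or its fibre, after adjusting notation) obeying \eqref{max-cond-0}. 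The size estimate $\rho(1-p^\ups_{n,j}) \le (1+\ups)\gamma_j(n) + \ups$ follows by combining $\tau$-smallness of $1-e_j$ and $1-r_j$ (via \eqref{trace-state}), then converting $\tau$-smallness to $\rho$-smallness with the uniform principle \Cref{uniform conv lem} applied to $f_j(s) = \frac{p_j(s)\otimes 1 - x_j(s)}{\varphi(p_j(s))}$ with an appropriate ancillary von Neumann algebra $\CM_1$ and $\varphi$; the auxiliary $\ups$'s absorb the weight-vs-trace discrepancy, and $\gamma_j(n)$ is read off as the $\tau$-defect, which tends to $0$ uniformly in $n$ because $\|f_j\|_1/\eps_j \lesssim 2^{-s_j} \to 0$ independently of $n$.

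The main obstacle I anticipate is the \emph{uniformity in $n$}: the compact sets $K_n K_n$ and hence the $L^1$-norms $\|f_j\|_1$ grow with $n$, so the choice of $\eps_j$ and the resulting bounds must be calibrated so that $\gamma_j(n) \to 0$ as $j\to\infty$ uniformly over $n$ — this forces the exponent $s_j$ (which depends only on $\rho,\tau$ via \Cref{trace-proj}, not on $n$) to be used inside $\eps_j$ in a way that exactly cancels the $\mu(K_nK_n)$ growth, and some care is needed to verify that the transference step and \Cref{point-eva} interact correctly on the fibre over a point $s_0 \in K_n$ rather than merely $\mu$-a.e.\ on $G$. A secondary technical point is ensuring the intersection projection $e_j \wedge r_j$ still has support inside $L^\infty(K_nK_n,\mu)\otimes\CM$ and nonzero fibres, so that the a.e.\ statement of \Cref{point-eva} can be upgraded to a genuine choice of $s_0$; this is where the compact-support reduction of \Cref{comp supp meas} is essential.
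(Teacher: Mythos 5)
Your overall route --- transfer $A_k(\phi_j)$ to averages $\A_k f_j$ on $\CN=L^\infty(G,\mu)\otimes\CM$, apply \Cref{maxthm}, cut by the projection $r_j$ of \Cref{trace-proj} to trade $\widetilde{\tau}$ for $\widetilde{\rho}$ at the cost of the factor $s_j$, evaluate fibrewise via \Cref{point-eva}, pull back through a point evaluation using $\rho\circ\alpha_s=\rho$, and obtain the uniform-in-$n$ limit from \Cref{uniform conv lem} --- is indeed the paper's route. However, there is a genuine gap exactly at the point you flag as the ``main obstacle'', and your proposed fixes do not close it. The argument does not use $F:=K_n$: both item (2) and the uniformity in $n$ hinge on choosing, by amenability, an auxiliary compact set $F$ (depending on $n$ and $\ups$) with $\mu(FK_n)/\mu(F)<1+\ups$, taking $f_j=\chi_{FK_n}(\cdot)\,\alpha_{(\cdot)^{-1}}(\phi_j)$ and $\gamma_j(n)=\widetilde{\rho}(1-q_{j,n})/\mu(FK_n)$. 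The good fibre point is then produced by an infimum-versus-average argument over $F$: choose $t_\ups\in F$ with $\rho(1-q_{j,n}(t_\ups))\le \inf_{s}\rho(1-q_{j,n}(s))+\ups \le \frac{1}{\mu(F)}\int_{F}\rho(1-q_{j,n}(s))\,d\mu(s)+\ups \le \frac{\mu(FK_n)}{\mu(F)}\,\gamma_j(n)+\ups\le(1+\ups)\gamma_j(n)+\ups$. With your choice $F=K_n$ the ratio $\mu(K_nK_n)/\mu(K_n)$ is uncontrolled, so you would only get $\rho(1-p^{\ups}_{n,j})\le \frac{\mu(K_nK_n)}{\mu(K_n)}\gamma_j(n)+\ups$, which is not item (2); and selecting $s_0$ merely because the projection has nonzero fibre on a set of positive measure gives no quantitative bound on $\rho(1-q_{j,n}(s_0))$ at all --- the averaging over $F$ is what produces that bound.

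Second, no calibration of $\eps_j$ can ``cancel the $\mu(K_nK_n)$ growth'': $s_j$ depends only on $j$ (through $\rho$ and $\tau$), never on $n$, so you cannot simultaneously keep the maximal constant of order $s_j/2^{s_j}$ and make the absolute $\widetilde{\tau}$-defect small uniformly in $n$ (your explicit $\eps_j$ gives $\|f_j\|_1/\eps_j\lesssim \mu(K_nK_n)^{3/2}2^{-s_j}$, not $2^{-s_j}$). The paper's resolution is not a cleverer $\eps_j$: one simply takes $\eps=2^{-s_j}$, accepts an absolute defect of order $\mu(FK_n)2^{-s_j}$, and works throughout with the normalized defect $\widetilde{\tau}(\chi_{FK_n}\otimes 1-q_{j,n})/\mu(FK_n)\le 2^{-s_j}+4^{-j}$, which is independent of $n$; \Cref{uniform conv lem} converts this into the uniform statement for $\widetilde{\rho}$, defining $\gamma_j(n)$, and the amenability ratio $\mu(FK_n)/\mu(F)<1+\ups$ transfers the normalized bound to the single fibre $t_\ups$. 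So the transference and trace-to-state parts of your proposal are sound, but items (1) and (2) of the theorem are not actually established without the F\o lner choice of $F$ and the infimum/average selection of $t_\ups$.
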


\begin{proof}
	Let $n \in \mathbb{N}$, $\delta, \upsilon > 0$, and let the sequence $(\phi_j)_{j \in \mathbb{N}} \subset \mathcal{M}_{*+}$ with $\|\phi_j\| \leq \frac{1}{4^j}$ for all $j \in \mathbb{N}$ be fixed. Conisider the $K_n$ as in Eq. \ref{compacttset}, i.e, 
	\[
	K_n := \bigcup_{k=1}^{n} F_k \cup \{ 1^G\},
	\]
	is a compact subset of $G$. Then by the amenability of $G$, choose a compact set $F$ such that 
	\[
	\frac{\mu(FK_n)}{\mu(F)} < 1+ \upsilon.
	\]
 Note that $\mu(FK_n) < \infty$ and $F \subseteq FK_n$ as $K_n$ contains the identity element of the group $G$. Consider the  compact set $F \subseteq G$ and define, for all $j \in \mathbb{N}$, 
	\[
	f_j(t) := \chi_{FK_n}(t) \alpha_{t^{-1}}(\phi_j).
	\]
	Observe that $f_j \in \mathcal{N}_{*+}$ and that $\|f_j\| \leq \mu(KF) \|\phi_j\|$. Furthermore, we note that 
	\[
	\alpha_{s^{-1}} A_n(\phi_j) = \A_n f_j(s) \quad \text{for all } s \in F.
	\]

For  $ \eps = \frac{1}{ 2^{s_j}}$ and for $f_j \in L^1(\CN, \tau)$,  by \Cref{maxthm},  there exists a projection $ e_{j,n} $ in $\CN$ such that 
\begin{enumerate}\setlength\itemsep{.7em} 
        \item $ \widetilde{ \tau }( \chi_{FK_n}\otimes 1-e_{j, n}) \leq 2^{s_j}\norm{ f_j}_1 \leq \frac{\mu( FK_n)}{2^{s_j}} $ and 
        \item $   \A_k(f_j)(  e_{j, n} x e_{j, n} ) \leq \frac{c }{2^{s_j}} \widetilde{\tau}( e_{j, n} x e_{j, n}), ~k \in [n]$, $ x \in \CN_+$ and  a.e in $\mu$.
\end{enumerate}
Now suppose $q_{j, n} = e_{j, n} \wedge r_j $ for $ j \in \N$, where $(r_j)$ is the sequence of projections as discussed as in the Remark \ref{trace-proj}. Then note that
\begin{align*}
 \widetilde{ \tau }( \chi_{FK_n}\otimes 1-q_{j, n})    
&\leq  \widetilde{ \tau }( \chi_{FK_n}\otimes 1-e_{j, n}) + \widetilde{ \tau }( \chi_{FK_n}\otimes 1-r_j)\\
&\leq \frac{\mu(FK_n)}{2^{s_j}}  + \frac{\mu(FK_n)}{4^j}= \mu(FK_n) \left( \frac{1 }{2^{s_j}} + \frac{1}{4^j}\right) 
\end{align*}
Let 
$ x \in q_{j, n} \CN_+ q_{j, n}, ~j \in \N \text{ and } k \in [n]$, then note that 
\begin{align*}
\A_k(f_j)( x ) = \A_k(f_j) \left( q_{j, n} x q_{j, n} \right) &\leq \frac{c }{2^{s_j}} \tau( e_{j, n} x e_{j, n})\\
&\leq \frac{c }{2^{s_j}} \widetilde{\tau}( x)\\
&= \frac{c }{2^{s_j}}  \widetilde{\rho}( x) \frac{\widetilde{\tau}( x)}{\widetilde{\rho}( x)} \\ 
&\leq  \frac{c s_j }{2^{s_j}}\widetilde{\rho}( x)~  (\text{see Eq.}  \ref{trace-state}),
\end{align*}
Therefore, further from  the \Cref{point-eva}, it follows that 
$$ \A_k(f_j)(s) ( q_{j,n }(s) x q_{j,n }(s) )
\leq \frac{c s_j }{2^{s_j}} \rho( q_{j,n }(s) x q_{j,n }(s)), ~ \text{ for all } x \in \CM_+ \text{ and } k \in [n],$$
a.e $\mu$ in  $s \in FK_n$. 
As $ F \subseteq FK_n$, there exists a Borel set $F_\mu$ with $\mu( F\setminus F_\mu) = 0$ and for all $ s \in F_\mu$, we have 
\begin{align}
\A_k(f_j)(s) ( q_{j,n }(s) x q_{j,n }(s) )
\leq \frac{c s_j }{2^{s_j}} \rho( q_{j,n }(s) x q_{j,n }(s)), ~ \text{ for all } x \in \CM_+ \text{ and } k \in [n],    
\end{align}

\noindent
For all  $n \in \mathbb{N}$, we further observe that 
\begin{align*} 
	\frac{\widetilde{\tau}(1-q_{j,n})}{\mu(FK_n)} \leq \left( \frac{1 }{2^{s_j}} + \frac{1}{4^j}\right) \xrightarrow{ j \to \infty } 0.
\end{align*}
Hence,
\begin{equation*}
	\lim_{j \to \infty} \frac{\widetilde{\tau}(1-q_{j,n})}{\mu(FK_n)}= 0, \quad \text{uniformly in }  n \in \mathbb{N}.
\end{equation*}
Consequently, by \Cref{uniform conv lem}, we obtain
\begin{equation*}
	\lim_{j \to \infty} \frac{\widetilde{\rho}(1-q_{j,n})}{\mu(FK_n)} =0, \quad \text{uniformly in }  n \in \mathbb{N}.
\end{equation*}
Suppose 
\[
\gamma_j(n) = \frac{1}{\mu(FK_n)} \widetilde{\rho} \left( 1-q_{j, n} \right),
\]
and we have 
\[
\lim_{ j \to \infty } \gamma_j(n) = 0,
\]
uniformly in $n \in \N$.

\noindent
Now, we find a \( t_\ups \in F \) such that 
\begin{align*}
	\rho  
	\left( 
	1- \alpha_{t_\ups^{-1}} 
	\left( 
	q_{j,n} 
	\left(t_\ups \right)
	\right) 
	\right) 
	&= \rho \left(1-  q_{j, n} \left(t_\ups \right) 
	\right)  \quad \text{since } \alpha_{t_\ups^{-1}} \left(1 \right) =1 \text{ and } \rho \circ \alpha_{t} = \rho.\\
	&\leq \inf_{s \in F_{\mu}} \rho \left(
	1-  q_{j, n} 
	\left( s \right) 
	\right) + \ups.
\end{align*}
We denote $ p_{j,n}^{v} = \alpha_{t_v^{-1}}(q_{j,n}(t_\ups)).$
Then, we note that 
\begin{align*}
	\rho(1- p_{j, n}^{\ups}) 
	&\leq \frac{1}{\mu(F_{\mu})} \int_{F_{\mu}}  \rho \left(1- q_{j, n}(s) \right) d\mu(s) + \ups\\
	&\leq \frac{\mu(FK_n)}{\mu(F)} \frac{1}{\mu(FK_n)} \tilde{\rho} \left( 1-q_{j, n} \right) + \ups \\
	&\leq ( 1+ \ups)  \gamma_j(n) + \ups.
\end{align*} 
Furthermore,  for all \( x \in  \CM_+ \) and \( k \in [n] \), we observe that
\begin{align*}
	A_k(\phi_j) (  p_{j, n}^{\ups} x p_{j, n}^{ \ups}) 	
	&= A_k(\phi_j) 
    \left( \alpha_{t_\ups^{-1}} \left( q_{j, n}(t_\ups) \right) \right) x  \left( \alpha_{t_\ups^{-1}} \left( q_{j, n}(t_\ups) \right)\right) \\
        &= \alpha_{t_\ups^{-1}} A_k(\phi_j) \left(  q_{j, n}(t_\ups)  \alpha_{t_\ups} (x)   q_{j, n}(t_\ups) \right) \\
&= \A_k(f_j)(t_\ups) \left(  q_{j, n}(t_\ups)  \alpha_{t_\ups} (x)   q_{j, n}(t_\ups) \right)\\
&\leq    \frac{cs_j}{2^{s_j}} (\rho\circ \alpha_{t_\ups} ) \left(   \alpha_{t_\ups^{-1}} \left( q_{j, n}(t_\ups) \right)  ~x  ~\alpha_{t_\ups^{-1}}\left(q_{j, n}(t_\ups) \right) \right)\\ 
&\leq    \frac{c   s_j}{2^{s_j}} \rho \left( p_{j, n}^{ \ups} x p_{j, n}^{\ups} \right), (\text{ as } \rho\circ \alpha_{t_\ups} = \rho ).
\end{align*}

\end{proof}

\begin{cor}\label{maximal ineq for averages-cor}

Let $\CM$ be a finite von Neumann algebra with a faithful normal tracial state $\tau$, and let $(\CM, G, \alpha, \rho)$ be a kernel. For all $j \in \N$, let $\phi_j \in \CM_{* s}$ satisfy $\norm{\phi_j} \leq \frac{1}{4^{s_j}}$. Fix $ \ups > 0$. Then for all $n \in \N$, there exist a sequence $\left(\gamma_j(n)\right)_{j \in \N}$ of positive real numbers and projections $\{p^{\ups}_{n,j} : j \in \N\}$ in $\CM$ such that:
	\begin{enumerate}\setlength\itemsep{.7em} 
		\item $ \displaystyle \lim_{j \to \infty} \gamma_j(n) = 0$, uniformly in $n \in \N$.  
		\item $\rho(1 - p^{ \ups}_{n,j}) \leq (1 + \ups)\gamma_j(n) + \ups$.  
		\item For all $j \in \N$,
		\begin{align}\label{max-cond}
		    A_k(\phi_j)(x) \leq  \frac{c s_j}{2^{s_j}} \rho(x), \quad \text{for all } x \in p^{ \ups}_{n,j} \CM_+ p^{\ups}_{n,j} \text{ and } k \in [n].
		\end{align}
	\end{enumerate}
	The constant $c$ depends only on the group $G$.

\end{cor}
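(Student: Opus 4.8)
The plan is to deduce \Cref{maximal ineq for averages-cor} directly from \Cref{maximal ineq for averages} by splitting each self-adjoint functional into its positive and negative parts. Recall that every normal self-adjoint functional $\phi \in \CM_{*s}$ admits a Jordan decomposition $\phi = \phi^{+} - \phi^{-}$ with $\phi^{\pm} \in \CM_{*+}$ and $\norm{\phi} = \norm{\phi^{+}} + \norm{\phi^{-}}$, a classical fact about normal functionals on von Neumann algebras. In particular, if $\norm{\phi_j} \leq \frac{1}{4^{s_j}}$, then also $\norm{\phi_j^{+}} \leq \frac{1}{4^{s_j}}$, where $(s_j)$ is the fixed sequence from \Cref{trace-proj} depending only on $\rho$ and $\tau$ (and not on the functionals). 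Hence the sequence $(\phi_j^{+})_{j \in \N} \subseteq \CM_{*+}$ satisfies the hypotheses of \Cref{maximal ineq for averages}.

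First I would record the elementary positivity of the averaging operators: since each $\alpha_{s^{-1}}$ is a $*$-automorphism of $\CM$, it maps $\CM_+$ into $\CM_+$, so for any $\psi \in \CM_{*+}$ and $x \in \CM_+$ the integrand $s \mapsto \psi(\alpha_{s^{-1}}(x))$ is nonnegative and therefore $A_k(\psi)(x) \geq 0$ for every $k$. Consequently, for $x \in \CM_+$,
\[
A_k(\phi_j)(x) \;=\; A_k(\phi_j^{+})(x) - A_k(\phi_j^{-})(x) \;\leq\; A_k(\phi_j^{+})(x),
\]
because $A_k(\phi_j^{-})(x) \geq 0$.

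Next, I would apply \Cref{maximal ineq for averages} to the sequence $(\phi_j^{+})_{j}$ and the given $\ups > 0$. For each $n \in \N$ this yields positive reals $(\gamma_j(n))_{j \in \N}$ and projections $\{p^{\ups}_{n,j} : j \in \N\}$ in $\CM$ with $\lim_{j \to \infty}\gamma_j(n) = 0$ uniformly in $n$, with $\rho(1 - p^{\ups}_{n,j}) \leq (1+\ups)\gamma_j(n) + \ups$, and with $A_k(\phi_j^{+})(x) \leq \frac{c s_j}{2^{s_j}}\rho(x)$ for all $x \in p^{\ups}_{n,j}\CM_+ p^{\ups}_{n,j}$ and all $k \in [n]$, the constant $c$ depending only on $G$. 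Since $p^{\ups}_{n,j}\CM_+ p^{\ups}_{n,j} \subseteq \CM_+$, the displayed inequality of the previous paragraph applies on exactly this set, giving
\[
A_k(\phi_j)(x) \;\leq\; A_k(\phi_j^{+})(x) \;\leq\; \frac{c s_j}{2^{s_j}}\,\rho(x)
\]
for all $x \in p^{\ups}_{n,j}\CM_+ p^{\ups}_{n,j}$ and $k \in [n]$, which is precisely \eqref{max-cond}; items (1) and (2) carry over verbatim, as does the dependence of $c$ only on $G$.

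There is essentially no obstacle in this argument: the only points meriting a word are that the Jordan decomposition is available in the predual and is norm-additive (so the bound $\norm{\phi_j^{+}} \leq \frac{1}{4^{s_j}}$ is automatic), and that the conclusion of \Cref{maximal ineq for averages} is stated precisely on the cones $p^{\ups}_{n,j}\CM_+ p^{\ups}_{n,j}$, where positivity of $x$ — hence the inequality $A_k(\phi_j) \leq A_k(\phi_j^{+})$ — is guaranteed. Thus the corollary follows immediately.
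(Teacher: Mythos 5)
Your argument is correct as a proof of the corollary exactly as stated, and it uses the same two ingredients as the paper (the Jordan decomposition $\phi_j=\phi_j^{+}-\phi_j^{-}$ with $\norm{\phi_j}=\norm{\phi_j^{+}}+\norm{\phi_j^{-}}$, and Theorem~\ref{maximal ineq for averages}), but you feed a different positive functional into the theorem. You apply it to $\phi_j^{+}$ alone and then discard $A_k(\phi_j^{-})(x)\ge 0$, which yields only the one-sided estimate $A_k(\phi_j)(x)\le \frac{cs_j}{2^{s_j}}\rho(x)$; the paper instead applies the theorem to $\psi_j=(\phi_j)_1+(\phi_j)_2$, whose norm is still $\norm{\phi_j}\le \frac{1}{4^{s_j}}$, and obtains the two-sided bound $\abs{A_k(\phi_j)(x)}\le A_k(\psi_j)(x)\le \frac{cs_j}{2^{s_j}}\rho(x)$ on the \emph{same} projections $p^{\ups}_{n,j}$. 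Since the displayed inequality \eqref{max-cond} is stated without absolute values, your proof does establish the corollary; but the absolute-value version is what is actually invoked later (in the proof of Theorem~\ref{Banach principle} the quantities $\abs{A_n(\phi-\phi_{j_l})(x)}$ and $\abs{A_m(\phi_{j_l}-\phi)(x)}$ are both controlled by the same projection), and recovering it from your version would require running your argument also for $-\phi_j$ and intersecting the two families of projections, at the cost of a worse bound in item (2). So the paper's choice of $\psi_j$ buys the symmetric estimate on a single family of projections essentially for free, which is why it is the preferable formulation even though your shorter route suffices for the statement as written.
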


\begin{proof}
	Let $\phi_j \in \CM_{*s}$ be such that $\norm{\phi_j} \leq \frac{1}{4^j}$. Note that for all $j \in \N$, there exist $(\phi_j)_1, (\phi_j)_2 \in \CM_{*+}$ such that $\phi_j = (\phi_j)_1 - (\phi_j)_2$ and $\norm{\phi_j} = \norm{(\phi_j)_1} + \norm{(\phi_j)_2}$. 
	
	By applying Theorem \ref{maximal ineq for averages} to $\psi_j = (\phi_j)_1 + (\phi_j)_2$, we obtain a sequence $(\gamma_j(n))_{j \in \N}$ of positive real numbers such that, for all $n \in \N$, there exist projections $\{p^{\ups}_{n,j}: j \in \N\}$ in $\CM$ satisfying:
	\begin{enumerate}\setlength\itemsep{.7em}       
		\item $\displaystyle \lim_{j \to \infty} \gamma_j (n) = 0$ uniformly in $n \in \N$.
		\item The inequality
		\[
		\rho(1 - p^{\ups}_{n,j}) \leq (1 + \ups) \gamma_j(n) + \ups
		\]
		holds.
		\item For all $j \in \N$, we have
		\[
		A_k(\psi_j)(x) \leq \frac{cs_j}{2^{s_j}} \rho(x)
		\]
		for all $x \in p^{\ups}_{n,j} \CM_+ p^{\ups}_{n,j}$.
	\end{enumerate}

	Furthermore, for all $x \in e^{i, \delta, \ups}_{n, \eps} \CM_+ e^{i, \delta, \ups}_{n, \eps}$, and for all $j \in \N$ and $k \in \N$, we have:
	\begin{align*}
		\abs{A_k(\phi_j)(x)}
		&= \abs{A_k((\phi_j)_1)(x) - A_k((\phi_j)_2)(x)}\\
		&\leq A_k(\psi_j)(x)\\
		&\leq \frac{cs_{j}}{2^{s_j}} \rho(x),
	\end{align*}
	
\end{proof}

\begin{rem}\label{rem: max ineq rel}
\begin{enumerate}
    \item Corollary \ref{maximal ineq for averages-cor} is analogous to the maximal inequality established in \cite[Corollary~3.18]{Bik-Dip-Amenable}, which was proved in the setting of state preserving actions on group metric measure spaces. In particular, \cite[Corollary~3.18]{Bik-Dip-Amenable} concludes with the estimate
\[
\lvert A_k(\phi_j)(x) \rvert 
   \leq C\delta \, \| \phi_j \| \, \| x \|
   + \frac{C}{2^j}\, \rho(x),
\]
for all \(x \in q^{\delta,\ups}_{n,j}\, \mathcal{M}_+\, q^{\delta,\ups}_{n,j}\) and all \(k \in [n]\), where the family of projections \((q^{\delta,\ups}_{n,j})\) is defined as in \cite[Corollary~3.18]{Bik-Dip-Amenable}.

In our setting, however, the maximal inequality-specifically the Inequality~\ref{max-cond}, is obtained from the from maximal inequality of  \cite{cadilhac2022noncommutative}. As a consequence, no error term of the form \(C\delta \|\phi_j\|\,\|x\|\) appears (see the Inequality \ref{max-cond}). Instead, we obtain 
\begin{align*}
		    A_k(\phi_j)(x) \leq  \frac{c s_j}{2^{s_j}} \rho(x), \quad \text{for all } x \in q^{ \ups}_{j, n} \CM_+ q^{\ups}_{j, n} \text{ and } k \in [n],
		\end{align*}
where the auxiliary sequence \((s_j)\) arises naturally when passing from the trace to the state.

Thus, to obtain the pointwise ergodic theorem, we need to prove an appropriate Banach principle, which will be established in the next section.

\item

We note that a maximal inequality analogous to \cite[Theorem~4.13]{Bik-Dip-neveu} can be established for ergodic averages taken over a suitably chosen F\o lner sequence in a unimodular amenable group. This follows from the ergodic average dominance result proved  very recently in \cite[Theorem~3.1]{chakraborty2025ergodic}.

We also remark that, when $G$ is assumed to be unimodular, the maximal inequality stated in \Cref{maximal ineq for averages-cor} applies to ergodic averages associated with F\o lner sequences satisfying structural properties different from those considered in \cite[Theorem~3.1]{chakraborty2025ergodic}.

\end{enumerate}
\end{rem}

		\section{\textbf{Pointwise ergodic theorems} }\label{Pointwise ergodic theorems}

		\noindent In this section, we study the pointwise convergence of specific ergodic averages. With the weak-type maximal inequality at our disposal 
		(cf.  \cite{cadilhac2022noncommutative}), 
		it remains to establish an appropriate Banach principle and demonstrate the pointwise convergence of elements from a dense subset. 
		
		\noindent Throughout this section, we assume that $\CM$ is a von Neumann algebra with a faithful normal tracial state $\tau$ and that $G$, with the right Haar measure, is an amenable group with right F\o lner sequence $(F_n)$. Additionally, we assume that $(\CM, G, \alpha)$ is a non-commutative dynamical system for which there exists a faithful normal state $\rho$ on $\CM$ satisfying $\rho \circ \alpha_s = \rho$ for all $s \in G$, making $(\CM, G, \alpha, \rho)$ a kernel.
		
		Before proceeding further, we fix the following notation. Let 
$\{\gamma_s : s \in G\}$ be a family of (appropriate) maps acting on a Banach space $E$. 
For the associated averaging operators, with  a mild abuse  of notations, we write:
\begin{itemize}
    \item $A_n(\cdot) := \frac{1}{\mu(F_n)} \int_{F_n} \gamma_{s^{-1}}(\cdot)\, d\mu(s)$ and \\
 
    \item $ B_n(\cdot) := \frac{1}{\mu(F_n)} \int_{F_n} \gamma_s(\cdot)\, d\mu(s).$
\end{itemize}
Strictly speaking, these operators should be denoted by $A_n^{\gamma}$ and 
$B_n^{\gamma}$ to emphasize their dependence on the family $\{\gamma_s\}_{s \in G}$.
However, for notational simplicity, we suppress this dependence and write $A_n$ and
$B_n$ throughout.

		\noindent Since $\rho \circ \alpha_s = \rho$ for all $s \in G$, we observe that one can define a unitary  $u_s$ on the \textit{G.N.S.} space $L^2(\CM, \rho)$ given by 
		\[
		u_s(x \Omega_\rho) = \alpha_s(x) \Omega_\rho,
		\]
		satisfying $u_s \circ u_t = u_{st}$ for all $s, t \in G$, where $\Omega_\rho$ is the associated cyclic and separating vector in $L^2(\CM, \rho)$. Throughout this section, we denote the Hilbert space inner product and norm in the aforementioned $G.N.S.$ space by $\inner{\cdot}{\cdot}_\rho$ and $\norm{\cdot}_\rho$, respectively. \\
		
		\noindent Now, we present the following mean ergodic theorem.

        		\begin{thm}\label{Mean erg}
			Let ${F_n}$ be F\o lner sequence with all the notation given in the introduction of this section. Then, the averages 
            \begin{align*}
                B_nx := \frac{1}{\mu(F_n)} \int_{F_n} u_s(x) \, d\mu(s), \quad x \in L^2(\CM, \rho),
            \end{align*}
				 
			converge in $\norm{\cdot}_\rho$.
		\end{thm}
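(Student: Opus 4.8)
The plan is to treat $\{u_s\}_{s\in G}$ as a strongly continuous unitary representation of $G$ on the Hilbert space $\h:=L^2(\CM,\rho)$ and to run the classical von Neumann mean ergodic argument, using only the F\o lner property of $(F_n)$. First I would check that everything is well posed: since $\rho\circ\alpha_s=\rho$, the operators $u_s$ are unitary and satisfy $u_su_t=u_{st}$, and the orbit maps $s\mapsto u_s x$ are continuous (weak continuity of the representation follows from the point-$w^*$ continuity of $s\mapsto\alpha_s$, and for a unitary representation weak continuity is equivalent to strong continuity). Hence $B_nx=\frac{1}{\mu(F_n)}\int_{F_n}u_s x\,d\mu(s)$ is a well-defined Bochner integral, and since each $u_s$ is unitary one has $\norm{B_n}\le 1$ for all $n$.

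Next I would introduce the orthogonal decomposition $\h=\h^G\oplus\h_0$, where $\h^G:=\{x\in\h:u_sx=x\text{ for all }s\in G\}$ and $\h_0:=(\h^G)^{\perp}$. The point is the identification $\h_0=\overline{\mathrm{span}}\{\,x-u_tx:x\in\h,\ t\in G\,\}$: a vector $z$ is orthogonal to every $x-u_tx$ iff $\inner{z}{x}_\rho=\inner{z}{u_tx}_\rho=\inner{u_{t^{-1}}z}{x}_\rho$ for all $x\in\h$ and $t\in G$, i.e.\ iff $u_{t^{-1}}z=z$ for every $t$, i.e.\ iff $z\in\h^G$; taking orthogonal complements gives the claim.

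On $\h^G$ there is nothing to prove, since $B_nx=x$ for every $n$. The heart of the matter is to show $B_ny\to 0$ for $y\in\h_0$, and by the uniform bound $\norm{B_n}\le 1$ together with density it suffices to take $y=x-u_tx$ with $x\in\h$, $t\in G$. Using the right-invariance of $\mu$ and the substitution $r=st$ (so that $s\in F_n\iff r\in F_nt$ and $d\mu(s)=d\mu(r)$), one gets
\begin{align*}
B_n(u_tx)=\frac{1}{\mu(F_n)}\int_{F_n}u_{st}x\,d\mu(s)=\frac{1}{\mu(F_n)}\int_{F_nt}u_rx\,d\mu(r),
\end{align*}
whence
\begin{align*}
\norm{B_n(x-u_tx)}_\rho\le\frac{1}{\mu(F_n)}\int_{F_n\,\triangle\,F_nt}\norm{u_rx}_\rho\,d\mu(r)=\frac{\mu(F_n\,\triangle\,F_nt)}{\mu(F_n)}\,\norm{x}_\rho\xrightarrow{n\to\infty}0
\end{align*}
by the F\o lner condition applied with $g=t$. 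Combining the two cases, $B_n$ converges strongly to the orthogonal projection $P\colon\h\to\h^G$; in particular $(B_nx)_n$ is $\norm{\cdot}_\rho$-convergent for every $x\in L^2(\CM,\rho)$, which is the assertion.

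The only step requiring genuine care is the bookkeeping of left versus right translations: one must verify that the representation law $u_su_t=u_{st}$ interacts with the \emph{right}-invariance of the Haar measure so that the translate appearing in the estimate above is $F_nt$, which is precisely the set controlled by the stated F\o lner condition $\mu(F_ng\,\triangle\,F_n)/\mu(F_n)\to 0$. The continuity and measurability of $s\mapsto u_sx$ needed to make sense of the Bochner integrals is routine and uses only the standing hypotheses on the dynamical system.
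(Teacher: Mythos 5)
Your argument is correct and follows essentially the same route as the paper: the orthogonal decomposition of $L^2(\CM,\rho)$ into the $u$-fixed vectors and the closed span of coboundaries $x-u_tx$, followed by the estimate $\norm{B_n(x-u_tx)}_\rho\le\norm{x}_\rho\,\mu(F_n\,\triangle\,F_nt)/\mu(F_n)\to 0$ via right-invariance and the F\o lner condition. Your additional remarks on the uniform bound $\norm{B_n}\le 1$, the density argument, and the identification of $(\h^G)^\perp$ only make explicit what the paper leaves implicit.
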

		
		\begin{proof}
			Since $(L^2(\CM, \rho), G, u)$ forms a non-commutative dynamical system, we observe that 
			\begin{align*}
				L^2(\CM, \rho) = F_2 \oplus F_2^\perp,
			\end{align*}
			where 
			\begin{align*}
				F_2 &= \{x \in L^2(\CM, \rho) : u_s(x) = x \text{ for all } s \in G\}, \\
				F_2^\perp &= \overline{\text{span}} \{ y - u_s(y) : y \in L^2(\CM, \rho),~ s \in G \}.
			\end{align*}
			
			Therefore, it suffices to check the convergence of $A_{n}x$ for $x = y - u_s(y)$, where $y \in L^2(\CM, \rho)$ and $s \in G$. Moreover, we observe that
			\begin{align*}
				B_nx &= \frac{1}{\mu(F_n)}\int_{F_n}u_t(y)d\mu(t) - \frac{1}{\mu(F_n)}\int_{F_n}u_{ts}(y)d\mu(t)\\
                &=\frac{1}{\mu(F_n)}\int_{F_n}u_t(y)d\mu(t) - \frac{1}{\mu(F_n)}\int_{F_ns}u_{t}(y)d\mu(t)\\
                &=\frac{1}{\mu(F_n)}\int_{G}(\chi_{F_n}-\chi_{F_ns})u_t(y)d\mu(t)
			\end{align*}
			Consequently, we get
			\begin{align*}
				\norm{B_n(x)} &\leq \frac{1}{\mu(F_n)}\int_{G}\abs{\chi_{F_n}-\chi_{F_ns}}\norm{u_t(y)}d\mu(t)\\
                &\leq \frac{1}{\mu(F_n)}\int_{G}\chi_{F_n \Delta F_ns}\norm{y}d\mu(t)\\
                &=\norm{y} \frac{\mu(F_n \Delta F_ns)}{\mu(F_n)}
			\end{align*}
			Since $(F_n)$ is a F\o lner sequence the R.H.S converges to 0 as $n \rightarrow \infty$. Hence $A_n(x)$ converges to 0.
\end{proof}

        		\noindent We recall the following lemma from \cite{Bik-Dip-neveu}.
		
		\begin{lem}[{\cite[Lemma 4.5]{Bik-Dip-neveu}}] \label{key lem1}
			Let $(\CM,G, \alpha)$ be a non-commutative dynamical system with a f.n. state $\rho$, as described at the beginning of this section. Then, there exists a non-commutative dynamical system $(\CM',G, \alpha')$, where $\CM'$ is the commutant of $\CM$, such that 
			\begin{align*}
				\inner{\alpha'_{s}(y')x \Omega_\rho}{\Omega_\rho}_\rho = \inner{y' \alpha_{s^{-1}}(x) \Omega_\rho}{\Omega_\rho}_\rho
			\end{align*}
			for all $x \in \CM$, $y' \in \CM'$, and $s \in G$.
		\end{lem}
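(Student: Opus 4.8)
The plan is to prove \Cref{key lem1} by constructing the dynamical system $(\CM', G, \alpha')$ directly from the unitaries $u_s$ on $L^2(\CM,\rho)$ and then verifying that the defining intertwining relation and the dynamical-system axioms hold. First I would observe that since $\rho\circ\alpha_s=\rho$, the unitaries $u_s$ defined by $u_s(x\Omega_\rho)=\alpha_s(x)\Omega_\rho$ implement $\alpha$ on $\CM$, i.e.\ $u_s x u_s^{*}=\alpha_s(x)$ for $x\in\CM$; hence conjugation by $u_s$ also leaves the commutant $\CM'$ invariant, and I would \emph{define} $\alpha'_s(y'):=u_s\, y'\, u_s^{*}$ for $y'\in\CM'$. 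It is then immediate that $\alpha'_s$ is a $*$-automorphism of $\CM'$ and that $\alpha'_s\circ\alpha'_t=\alpha'_{st}$, because $u_s u_t=u_{st}$.

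Next I would check the three axioms of \Cref{nc dyn sys} for $(\CM',G,\alpha')$: positivity is clear since each $\alpha'_s$ is a $*$-automorphism; uniform boundedness holds with constant $1$ since $u_s$ is unitary; and for the orbit-continuity condition I would use strong continuity of $s\mapsto u_s$ on $L^2(\CM,\rho)$ — which follows from the assumed norm-continuity of $s\mapsto\alpha_s$ in the relevant topology together with density of $\CM\Omega_\rho$ — to deduce $w^{*}$-continuity (indeed strong-$*$ continuity on bounded sets) of $s\mapsto \alpha'_s(y')=u_s y' u_s^{*}$. The key identity is then a short computation: for $x\in\CM$, $y'\in\CM'$, $s\in G$,
\begin{align*}
\inner{\alpha'_s(y')\,x\Omega_\rho}{\Omega_\rho}_\rho
&= \inner{u_s y' u_s^{*}\,x\Omega_\rho}{\Omega_\rho}_\rho
= \inner{y' u_s^{*} x\Omega_\rho}{u_s^{*}\Omega_\rho}_\rho,
\end{align*}
and I would identify $u_s^{*}\Omega_\rho=u_{s^{-1}}\Omega_\rho=\alpha_{s^{-1}}(1)\Omega_\rho=\Omega_\rho$ (using $\alpha_{s^{-1}}(1)=1$) and $u_s^{*} x\Omega_\rho=\alpha_{s^{-1}}(x)\Omega_\rho$, and also commute $y'$ past $u_s^{*}x$'s action appropriately — more precisely, since $y'\in\CM'$ and $\alpha_{s^{-1}}(x)\in\CM$, we may rewrite $y' u_s^{*}x\Omega_\rho = y'\,\alpha_{s^{-1}}(x)\Omega_\rho$, giving $\inner{y'\alpha_{s^{-1}}(x)\Omega_\rho}{\Omega_\rho}_\rho$ as desired.

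The only genuinely delicate point — and the step I expect to be the main obstacle — is the continuity axiom: one must be careful that the hypothesis gives continuity of $s\mapsto\alpha_s$ in the $w^{*}$-topology on $\CM$, and to upgrade this to strong continuity of $s\mapsto u_s$ on $L^2(\CM,\rho)$ one uses that $\rho$ is a fixed normal state, so that $\norm{(u_s-u_t)x\Omega_\rho}_\rho^{2}=\norm{(\alpha_s-\alpha_t)(x)\Omega_\rho}_\rho^{2}$ can be controlled via $\rho\big((\alpha_s(x)-\alpha_t(x))^{*}(\alpha_s(x)-\alpha_t(x))\big)$ and $w^{*}$-to-strong arguments on bounded sets; once $s\mapsto u_s$ is strongly continuous, $s\mapsto u_s y' u_s^{*}$ is strong-$*$ continuous on the unit ball of $\CM'$, which suffices. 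Since \Cref{key lem1} is quoted verbatim from \cite[Lemma 4.5]{Bik-Dip-neveu}, I would in fact simply cite that reference for the full verification and present only the construction $\alpha'_s(\cdot)=u_s(\cdot)u_s^{*}$ together with the one-line computation of the intertwining identity above.
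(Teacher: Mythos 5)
Your construction is correct, and since the paper itself states \Cref{key lem1} only as a quotation of \cite[Lemma 4.5]{Bik-Dip-neveu} without reproducing a proof, there is nothing in the text to diverge from; your argument is the natural one in the present setting. The essential points all check out: because each $\alpha_s$ is a $\rho$-preserving $*$-automorphism (the standing assumption for $E=\CM$ in \Cref{nc dyn sys}), the maps $u_s$ are genuinely unitary, $u_s x u_s^{*}=\alpha_s(x)$ for $x\in\CM$, hence $u_s\CM u_s^{*}=\CM$ and conjugation preserves $\CM'$; moreover $u_s\Omega_\rho=\Omega_\rho$, so $u_s^{*}x\Omega_\rho=\alpha_{s^{-1}}(x)\Omega_\rho$ and the intertwining identity follows by applying $y'$ to this vector identity — note that no ``commuting past'' is actually needed at that step, which removes the one awkward phrase in your computation. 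For the continuity axiom, the clean way to finish your sketch is to expand
\begin{align*}
\norm{(u_t-u_s)x\Omega_\rho}_\rho^{2}
= 2\rho(x^{*}x)-2\,\mathrm{Re}\,\rho\bigl(\alpha_s(x)^{*}\alpha_t(x)\bigr),
\end{align*}
using $\rho\circ\alpha_s=\rho$ and multiplicativity, and then invoke normality of $\rho$ and $w^{*}$-continuity of $t\mapsto\alpha_t(x)$ to get strong continuity of $s\mapsto u_s$ (and of $s\mapsto u_s^{*}=u_{s^{-1}}$), hence $\sigma$-weak continuity of $s\mapsto u_s y' u_s^{*}$. One caveat worth recording: your argument leans on the automorphism hypothesis; in the cited source the commutant system is obtained under weaker assumptions on the maps, where $u_s$ need not be unitary and $\mathrm{Ad}(u_s)$ need not preserve $\CM'$, so the unitary-conjugation route is a legitimate shortcut here but not a substitute for the reference in full generality.
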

		
		\noindent Let $G$ be an amenable group with a F\o lner sequence $(F_n)$ and  $ (M, G, \alpha) $ be dynamical system. Then we recall the following averages; 
		\begin{align*}
			A_{n}(\nu) := \frac{1}{\mu(F_n)} \int_{F_n} \alpha_{ s^{-1}}(\nu) \, d\mu(s), \quad \nu \in \CM_*,
		\end{align*}
		 \noindent We also recall the following norm convergence result from the same paper, with the additional condition that $\rho$ is invariant under the action of $G$.
		
		\begin{thm}[{\cite[Theorem 4.6]{Bik-Dip-neveu}}] \label{mean erg thm for state}
			For all $\nu \in \CM_*$, there exists $\bar{\nu} \in \CM_*$ such that 
			\begin{align*}
				\bar{\nu} = \norm{\cdot}\text{-} \lim_{n \to \infty} A_{n}(\nu).
			\end{align*}
            Further, if the group $G$ is unimodular and that the F{\o}lner sequence $\{F_{n}\}_{n \in \mathbb{N}}$ is chosen to be symmetric, then, $\overline{\nu}$ is $G$-invariant.
		\end{thm}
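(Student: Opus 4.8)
The plan is to transfer the statement to the GNS Hilbert space $L^2(\CM,\rho)$, where the mean ergodic theorem \Cref{Mean erg} is already available, and then to promote convergence from a norm-dense subset of $\CM_*$ to all of $\CM_*$ using the uniform boundedness of the averages. Throughout, $u_s$ denotes the unitaries on $L^2(\CM,\rho)$ implementing $\alpha$, so that $u_s x u_s^{*}=\alpha_s(x)$ for $x\in\CM$ and $u_s\Omega_\rho=\Omega_\rho$ (since $\rho\circ\alpha_s=\rho$ and $\alpha_s(1)=1$).

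\emph{Step 1 (a convenient dense subspace).} Since $\rho$ is faithful, $\Omega_\rho$ is cyclic and separating for $\CM$. For $\zeta\in L^2(\CM,\rho)$ define $\nu_\zeta\in\CM_*$ by $\nu_\zeta(x):=\inner{x\Omega_\rho}{\zeta}_\rho$. The assignment $\zeta\mapsto\nu_\zeta$ is conjugate-linear and contractive (because $\norm{x\Omega_\rho}_\rho\le\norm{x}$ as $\rho$ is a state), and its range is norm dense in $\CM_*$: if $x_0\in\CM=(\CM_*)^{*}$ annihilates every $\nu_\zeta$, then $x_0\Omega_\rho=0$, whence $x_0=0$ because $\Omega_\rho$ is separating. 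It therefore suffices to establish convergence of $A_n(\nu_\zeta)$ for each $\zeta$.

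\emph{Step 2 (the Hilbert-space computation).} Using $\alpha_{s^{-1}}(x)\Omega_\rho=u_s^{*}x u_s\Omega_\rho=u_s^{*}x\Omega_\rho$, one computes for every $x\in\CM$
\[
A_n(\nu_\zeta)(x)=\frac1{\mu(F_n)}\int_{F_n}\inner{u_s^{*}x\Omega_\rho}{\zeta}_\rho\,d\mu(s)=\Big\langle x\Omega_\rho,\ \frac1{\mu(F_n)}\int_{F_n}u_s\zeta\,d\mu(s)\Big\rangle_\rho=\inner{x\Omega_\rho}{B_n\zeta}_\rho ,
\]
that is, $A_n(\nu_\zeta)=\nu_{B_n\zeta}$, where $B_n$ is exactly the averaging operator of \Cref{Mean erg}. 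By \Cref{Mean erg}, $B_n\zeta\to P\zeta$ in $\norm{\cdot}_\rho$, with $P$ the orthogonal projection onto the $u$-invariant vectors; by the contractivity of $\zeta\mapsto\nu_\zeta$ we conclude $A_n(\nu_\zeta)=\nu_{B_n\zeta}\to\nu_{P\zeta}$ in $\CM_*$. Since each predual map $\alpha_{s^{-1}}$ on $\CM_*$ is a positive isometry, $\norm{A_n}\le1$ for all $n$, so $\{\nu\in\CM_*:(A_n(\nu))_n\text{ converges}\}$ is a norm-closed subspace containing the dense set of Step 1; hence it is all of $\CM_*$. Put $\bar\nu:=\lim_n A_n(\nu)$, so $\nu\mapsto\bar\nu$ is a contraction.

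\emph{Step 3 ($G$-invariance).} Assume $G$ unimodular and $(F_n)$ symmetric. Then $(F_n)$ is also left F\o lner: for $t\in G$, taking inverses and using unimodularity, $\mu(t^{-1}F_n\triangle F_n)=\mu(F_n t\triangle F_n)\to0$. A change of variables in $A_n(\nu)\circ\alpha_t$ converts the integration domain $F_n$ into $t^{-1}F_n$, so $\norm{\alpha_t(A_n(\nu))-A_n(\nu)}\le\norm{\nu}\,\mu(t^{-1}F_n\triangle F_n)/\mu(F_n)\to0$; passing to the limit and using continuity of $\alpha_t$ on $\CM_*$ gives $\alpha_t(\bar\nu)=\bar\nu$, i.e. $\bar\nu$ is $G$-invariant. (Alternatively, without these hypotheses: for $\nu=\nu_\zeta$ the limit $\nu_{P\zeta}$ is $G$-invariant because $\inner{\alpha_t(x)\Omega_\rho}{P\zeta}_\rho=\inner{x\Omega_\rho}{u_t^{*}P\zeta}_\rho=\inner{x\Omega_\rho}{P\zeta}_\rho$, and invariance then extends to all of $\CM_*$ by continuity of $\nu\mapsto\bar\nu$.)

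The only genuinely non-formal ingredient is the Hilbert-space input \Cref{Mean erg}; everything else is bookkeeping, and I expect the main obstacle to be purely presentational: keeping the identification $\CM_*\cong L^1(\CM,\tau)$, the GNS picture, and the two incarnations of the action (the unitaries $u_s$ and the predual maps $\alpha_{s^{-1}}$ on $\CM_*$) consistent, and justifying the interchange of Bochner integral and evaluation. One may equally run Step 2 through \Cref{key lem1}: applying it to the functionals $x\mapsto\inner{xy'\Omega_\rho}{\Omega_\rho}_\rho$ with $y'\in\CM'$ (which are again norm dense in $\CM_*$ since $\Omega_\rho$ is cyclic and separating for $\CM'$ as well) turns $A_n$ into the Hilbert-space averages $B_n(y'\Omega_\rho)$ of the commutant system $(\CM',G,\alpha')$, and one concludes exactly as above.
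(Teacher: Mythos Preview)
Your proof is correct and follows the same overall strategy as the paper: pass to the GNS Hilbert space $L^2(\CM,\rho)$, invoke the Hilbert-space mean ergodic theorem (\Cref{Mean erg}) on a norm-dense family of vector functionals, and then extend to all of $\CM_*$ by the uniform contractivity of the averages $A_n$. The $G$-invariance argument in your Step~3 matches \Cref{invariantnu} essentially verbatim.

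The only noteworthy difference is the choice of dense subspace. The paper works with functionals $\psi_{y_1,y_2}(x)=\inner{xy_1\Omega_\rho}{y_2\Omega_\rho}_\rho$ for $y_1,y_2$ in the analytic elements $\CM'(\sigma)$ of the commutant, and then uses a bit of Tomita--Takesaki theory to rewrite $y_1^{*}y_2\Omega_\rho=z\Omega_\rho$ with $z=J\sigma_{i/2}(y_2^{*}y_1)J\in\CM$ before arriving at the identity $A_n(\psi_{y_1,y_2})(x)=\inner{x\Omega_\rho}{B_n(z\Omega_\rho)}_\rho$. Your approach short-circuits this: you take directly the functionals $\nu_\zeta(x)=\inner{x\Omega_\rho}{\zeta}_\rho$ for arbitrary $\zeta\in L^2(\CM,\rho)$, obtain $A_n(\nu_\zeta)=\nu_{B_n\zeta}$ in one line, and verify density by the separating-vector argument. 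This avoids modular theory entirely and is cleaner; the paper's detour through $\CM'(\sigma)$ and the $J$-map is not actually needed for the conclusion, since only the vector $y_1^{*}y_2\Omega_\rho$ (not its representation as $z\Omega_\rho$ with $z\in\CM$) enters the final computation. Your alternative via \Cref{key lem1} is closer in spirit to the paper's route but is still simpler in that it does not require restricting to analytic elements.
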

            \begin{proof}
Let $L^{2}(M,\rho)$ be the closure of $M$ with respect to the norm induced from the
inner product $\langle \cdot , \cdot \rangle_{\rho}$. Then we can define the following contractions on the Hilbert space
$L^{2}(M,\rho)$:
\[
u_{g}(x\Omega_{\rho})=\alpha_{g}(x)\Omega_{\rho}, \qquad x\in M_{s},\ g\in G.
\]

Now, consider
\[
B_{n}:=\frac{1}{m(F_{n})}\int_{F_{n}}u_g\, d\mu(g).
\]
Then by the von Neumann mean ergodic theorem, it follows that for all $\xi\in L^{2}(M,\rho)$, $T_{n}(\xi)$ converges
to $P\xi$ strongly, where $P$ is the orthogonal projection of $L^{2}(M,\rho)$ onto the subspace
\begin{center}
$\{\xi\in L^{2}(M,\rho):u_{g}\xi=\xi\ \text{for all }g\in G\}$.
\end{center}
Let $x\in M$, then it follows that $T_{n}(x\Omega)$ converges in $L^{2}(M,\rho)$.

\noindent Let $y_{1},y_{2}\in M'(\sigma)$ and define $\psi_{y_{1},y_{2}}(x)=\langle xy_{1}\Omega_{\rho},y_{2}\Omega_{\rho}\rangle_{\rho}$
for all $x\in M$. Then by the discussion just before Remark 4.3, there exists a $z\in M$ such that
$y_{1}^{*}y_{2}\Omega_{\rho}=z\Omega_{\rho}$, where $z=J\sigma_{i/2}(y_{2}^{*}y_{1})J$.
Consequently, for $x\in M$,
\[
A_{n}(\psi_{y_{1},y_{2}})(x)
    =\frac{1}{\mu(F_{n})}\int_{F_{n}}
      \langle \alpha_{g^{-1}}(x)y_{1}\Omega_{\rho},y_{2}\Omega_{\rho}\rangle_{\rho}\, d\mu(g)
\]
\[
    =\frac{1}{\mu(F_{n})}\int_{F_{n}}
      \langle \alpha_{g^{-1}}(x)\Omega_{\rho},y_{1}^{*}y_{2}\Omega_{\rho}\rangle_{\rho}\, d\mu(g)
\]
\[
    =\frac{1}{\mu(F_{n})}\int_{F_{n}}
      \langle \alpha_{g^{-1}}(x)\Omega_{\rho},z\Omega_{\rho}\rangle_{\rho}\, d\mu(g).
\]

\[
= \frac{1}{\mu(F_{n})} \int_{F_{n}} \langle x\Omega_{\rho},\, u_{g^{-1}}^{*}(z\Omega_{\rho}) \rangle_{\rho}\, d\mu(g)
\]
\[
= \langle x\Omega_{\rho},\, B_{n}(z\Omega_{\rho}) \rangle_{\rho}\, d\mu(g).
\]

Hence, for all $x\in M$, 
\[
A_{n}(\psi_{y_{1},y_{2}})(x) \to \langle x\Omega_{\rho}, \eta \rangle_{\rho},
\quad\text{where }\eta = \lim_{n\to\infty} A_{n}(z\Omega_{\rho}).
\]
Consider $\overline{\psi}_{y_{1},y_{2}} \in M_{*}$ defined by
\[
\overline{\psi}_{y_{1},y_{2}}(x) = \langle x\Omega_{\rho}, \eta \rangle_{\rho}, \qquad x\in M.
\]

Then by standard argument it follows that 
\[
\overline{\psi}_{y_{1},y_{2}} \circ \alpha_{g} = \overline{\psi}_{y_{1},y_{2}}
\quad\text{for all } g\in G,
\]
and
\[
\|\overline{\psi}_{y_{1},y_{2}}\| = \|\cdot\|_{1}
    - \lim_{n\to\infty} B_{n}(\psi_{y_{1},y_{2}}).
\]

Hence the result follows since the set 
\[
\{\psi_{y_{1},y_{2}} : y_{1},y_{2}\in M'(\sigma)\}
\]
is total in $M_{*}$.

        \end{proof}
        
\begin{rem}\label{invariantnu}
Now, if G is unimodular, we can take $(F_n)_{n \in \mathbb{N}}$ to be symmetric and left and right F\o lner both. In that case, $\overline{\nu} $ is G invariant. Indeed, for $ \nu \in M_*$, recall that 
\[
A_n(\nu)=\frac{1}{\nu(F_n)}\int_{F_n}\nu\circ\alpha_{g^{-1}}\,d\mu(g),
\text{ and }
\overline\nu=\|\cdot\|_1-\lim_{n\to\infty} A_n(\nu).
\]
Fix $h\in G$, we observe  that $\overline\nu\circ\alpha_h=\overline\nu$. Indeed, we notice  that 
\begin{align*}
A_n(\nu)\circ\alpha_h
=\frac{1}{\mu(F_n)}\int_{F_n}\nu\circ\alpha_{g^{-1}}\circ\alpha_h\,d\mu(g)
&=\frac{1}{\mu(F_n)}\int_{F_n}\nu\circ\alpha_{g^{-1}h}\,d\mu(g)\\
&=\frac{1}{\mu(F_n)}\int_{F_n}\nu\circ\alpha_{{(h^{-1}g)}^{-1}}\,d\mu(g)\\
&= \frac{1}{\mu(F_n)}\int_{h^{-1}F_n}\nu\circ\alpha_{g^{-1}}\,d\mu(g) \text{ [ by unimodularity]}
\end{align*}

\noindent Hence the difference is
\[
A_n(\nu)\circ\alpha_h - A_n(\nu)
=\frac{1}{\mu(F_n)}\Big(\int_{h^{-1}F_n}-\int_{F_n}\Big)\nu\circ\alpha_{g^{-1}}\,d\mu(g).
\]
Taking the norm (dual norm on $M_*$) and using $\|\nu\circ\alpha_{g^{-1}}\| \leq \|\nu\|$ we get the estimate
\[
\|A_n(\nu)\circ\alpha_h - A_n(\nu)\|
\le \frac{1}{\mu(F_n)}\int_{F_n\Delta (h^{-1}F_n)} \|\nu\circ\alpha_{g^{-1}}\|\,d\mu(g)
\le \|\nu\|\frac{\mu\big(F_n\Delta (h^{-1}F_n)\big)}{\mu(F_n)}.
\]
Because $\{F_n\}$ is a F{\o}lner sequence and $G$ is unimodular (so left and right Haar measures agree), the ratio
\[
\frac{\mu\big(F_n\Delta (h^{-1}F_n)\big)}{\mu(F_n)}\to 0\qquad(n\to\infty).
\]
Thus $\|A_n(\nu)\circ\alpha_h - A_n(\nu)\|\to 0$ as $n\to\infty$. Passing to the $\|\cdot\|_1$--limit that defines $\overline\nu$ yields
\[
\overline\nu\circ\alpha_h=\overline\nu.
\]
Since $h\in G$ was chosen arbitrarily, $\overline\nu$ is $G$--invariant.

\end{rem}
\noindent We obtain the following convergence result for a noncommutative dynamical system.
        		\begin{lem}\label{dense set-lem}
			Let $(E, G, \gamma)$ be a dynamical system, where $(E, \norm{\cdot})$ is a Banach space. Then, for every $k \in \N$ and $a \in E$, we have
			\begin{align*}
				\lim_{n \to \infty} \norm{B_n (a - B_k(a))} = 0,
			\end{align*}
			where
			\begin{align*}
				B_m(a) := \frac{1}{\mu(F_{m})} \int_{F_{m}} \gamma_s(a) \, d\mu(s), \quad m \in \N.
			\end{align*}
		\end{lem}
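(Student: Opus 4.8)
The plan is to expand $B_n\bigl(a-B_k(a)\bigr)=B_n(a)-B_n\bigl(B_k(a)\bigr)$ and to show that the second term is, up to an error that vanishes as $n\to\infty$, an average of translates of $B_n(a)$, each of which is close to $B_n(a)$ itself by the F\o lner property. Set $C:=\sup_{s\in G}\norm{\gamma_s}<\infty$ (condition (2) of Definition \ref{nc dyn sys}). Since each $\gamma_s$ is linear and norm-continuous and the vector-valued map $(s,t)\mapsto \gamma_{st}(a)$ is Bochner integrable on $F_n\times F_k$ (its norm is bounded by $C\norm{a}$ and $\mu(F_n),\mu(F_k)<\infty$), Fubini's theorem for Bochner integrals lets me write
\[
B_n\bigl(B_k(a)\bigr)=\frac{1}{\mu(F_k)}\int_{F_k}\left(\frac{1}{\mu(F_n)}\int_{F_n}\gamma_{st}(a)\,d\mu(s)\right)d\mu(t).
\]

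Next I would use the right-invariance of $\mu$: for fixed $t$, the substitution $v=st$ gives $\int_{F_n}\gamma_{st}(a)\,d\mu(s)=\int_{F_nt}\gamma_v(a)\,d\mu(v)$ and $\mu(F_nt)=\mu(F_n)$, so the inner average is exactly the average of $\gamma_{\bullet}(a)$ over the right-translate $F_nt$. Comparing it with $B_n(a)$,
\[
\norm{\,B_n(a)-\frac{1}{\mu(F_n)}\int_{F_nt}\gamma_v(a)\,d\mu(v)\,}
=\frac{1}{\mu(F_n)}\norm{\int_G\bigl(\chi_{F_n}-\chi_{F_nt}\bigr)(v)\,\gamma_v(a)\,d\mu(v)}
\le C\norm{a}\,\frac{\mu(F_n\triangle F_nt)}{\mu(F_n)} .
\]
Averaging this estimate over $t\in F_k$ yields
\[
\norm{B_n\bigl(a-B_k(a)\bigr)}\le \frac{C\norm{a}}{\mu(F_k)}\int_{F_k}\frac{\mu(F_n\triangle F_nt)}{\mu(F_n)}\,d\mu(t).
\]

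Finally I would apply dominated convergence on $F_k$: for each fixed $t$ the F\o lner condition gives $\mu(F_n\triangle F_nt)/\mu(F_n)\to 0$ as $n\to\infty$, while this quantity is bounded by $2$ uniformly in $n$ and $t$, and $\mu(F_k)<\infty$; hence the integral tends to $0$, which is the claim. The only points requiring genuine care are the measurability in $t$ of $\mu(F_n\triangle F_nt)$ (which follows from joint measurability of the right translation action) and the rigorous justification of the Fubini and change-of-variable steps for the Banach-space-valued integrals; these are routine, so the essential content is the right-invariance computation combined with the F\o lner estimate above.
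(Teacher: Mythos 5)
Your argument is correct and coincides with the paper's own proof: both expand $B_n(a-B_k(a))$ via Fubini, use right-invariance to recognize the inner integral as an average over $F_n t$, bound the difference with $B_n(a)$ by $\sup_{s\in G}\norm{\gamma_s}\,\norm{a}\,\mu(F_n\triangle F_n t)/\mu(F_n)$, and conclude by dominated convergence over $t\in F_k$ using the F\o lner property. No gaps; the measurability and Bochner--Fubini points you flag are exactly the routine details the paper also treats implicitly.
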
 

		\begin{proof}
			For all $n \in \N$, we have
			\begin{align*}
				B_n(a - B_k(a)) 
				&= \frac{1}{\mu(F_{n})} \int_{F_{n}} \Big[ \gamma_s(a) - \gamma_s (B_k(a)) \Big] d\mu(s) \\
				&= \frac{1}{\mu(F_{n})} \int_{F_{n}} \Bigg[ \gamma_s (a) - \gamma_s \bigg( \frac{1}{\mu(F_{k})} \int_{F_{k}} \gamma_t (a) d\mu(t) \bigg) \Bigg] d\mu(s) \\
				&= \frac{1}{\mu(F_{n})} \frac{1}{\mu(F_{k})} \int_{F_{n}} \int_{F_{k}}  \Big( \gamma_s (a)- \gamma_{st}(a) \Big) d\mu(t) d\mu(s) \\
				&= \frac{1}{\mu(F_{n})} \frac{1}{\mu(F_{k})} \int_{F_{k}} \int_{F_{n}}  \Big( \gamma_s (a)- \gamma_{st}(a) \Big) d\mu(s) d\mu(t) \quad \text{(by Fubini's theorem)} \\
				&= \frac{1}{\mu(F_{k})} \int_{F_{k}} \frac{1}{\mu(F_{n})} \Bigg( \int_{F_{n}}   \gamma_s (a)d\mu(s)- \int_{F_{n}t} \gamma_s (a)d\mu(s) \Bigg) d\mu(t)\\
                &= \frac{1}{\mu(F_{k})} \int_{F_{k}} \frac{1}{\mu(F_{n})} \Bigg( \int_{G}(\chi_{F_{n}}-\chi_{F_{n}t})(s)\gamma_s (a)d\mu(s) \Bigg) d\mu(t).
			\end{align*}
			
		\noindent We observe that $\abs{\chi_{F_{n}}-\chi_{F_{n}t}}=\chi_{F_{n}\Delta F_{n}t}$\text{.} Since $\sup_{g \in G}\|\gamma_{s}\|$ is finite, we  get 
        \begin{align*}
 \left\| B_n(a - B_k(a)) \right\| &\leq   \frac{1}{\mu(F_k)} \int_{F_k} \Big( \frac{1}{\mu(F_n)} \int_{G} \abs{(\chi_{F_n}- \chi_{F_n t})(s)} \left\|\gamma_s(a)\right\| \, d\mu(s)\Big)d\mu(t)  \\
&\le \frac{ \| a \|\sup_{g \in G}\| \gamma_g \| }{\mu(F_{k})}\int_{F_k}\frac{\mu(F_n \Delta F_n t) }{\mu(F_{n})}d\mu(t)
\end{align*}
Since the functions $g_{n}(t)=\frac{\mu(F_n \Delta F_n t)}{\mu(F_{n})}$ are bounded on $F_{k}$ with $\mu(F_k) < \infty$ for each $k\in \mathbb{N}$, and converges to 0 pointwise (by the definition of amenability), by the Dominated Convergence Theorem,  we obtain for every $k \in \mathbb{N}$ and  $a \in E$,
\begin{align*}
\lim_{n \rightarrow \infty}\left\| B_n(a - B_k(a)) \right\|=0.
\end{align*}
		\end{proof}


\noindent Given a kernel $(\CM, G, \alpha, \rho)$, we observe that by \Cref{mean erg thm for state}, it follows that $\norm{\cdot}_1- \lim_{n \to \infty} B_{n}(\nu)$ exists for all $\nu \in \CM_*$, and the limit is denoted by $\bar{\nu}$. 

\noindent We recall another result from \cite{Bik-Dip-neveu}, which identifies a dense subset of $\CM_*$ such that for elements of this set, the ergodic averages converge in a manner that is similar to bilateral almost uniform convergence. This result requires that the group $G$ be unimodular, due to our use of remark \ref{invariantnu}.
\begin{prop}[\cite{Bik-Dip-neveu}, Lemma 4.9]\label{dense set}
	Consider the following set:
	\begin{align*}
		\CW_1 := \{ \nu - A_{k}(\nu ) + \bar{\nu} : \quad  k \in \N,\  \nu \in \CM_{*+} \text{ with } \nu \leq \lambda \rho \text{ for some } \lambda >0 \}.
	\end{align*}
	\begin{enumerate}\setlength\itemsep{.7em}
		\item[(i)] Define $\CW = \CW_1 - \CW_1$. Then, $\CW$ is dense in $\CM_{* s}$.
		
		\item[(ii)] For all $\nu \in \CW$, we have  
		\begin{align}\label{maximal type}
			\lim_{n \to \infty} \sup_{x \in \CM_+, x \neq 0} \frac{\abs{(A_{n}(\nu)- \bar{\nu})(x)}}{\rho(x)} =0.
		\end{align}
	\end{enumerate}
\end{prop}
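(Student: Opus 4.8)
The plan is to establish (i) and (ii) separately, but in both cases the key observation is a reformulation, in the GNS space $L^{2}(\CM,\rho)$, of the defining condition $\nu\le\lambda\rho$ of $\CW_{1}$: since $\nu(a^{*}a)\le\lambda\rho(a^{*}a)=\lambda\norm{a\Omega_\rho}_{\rho}^{2}$, the sesquilinear form $(a\Omega_\rho,b\Omega_\rho)\mapsto\nu(b^{*}a)$ is bounded by $\lambda$ on the dense subspace $\CM\Omega_\rho$, so there is a unique $T\in\CM'_{+}$ with $\norm{T}\le\lambda$ and $\nu(x)=\inner{xT\Omega_\rho}{\Omega_\rho}_{\rho}$ for all $x\in\CM$; I will call $T$ the density of $\nu$. (The operator $T$ lands in $\CM'$, not merely in $\CB(L^{2}(\CM,\rho))$, because $\nu$ is a functional on $\CM$.)

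For part (ii), fix $\omega=\nu-A_{k}(\nu)+\bar\nu\in\CW_{1}$ with $T$ the density of $\nu$. I would first record how $A_{m}$ acts on densities: if a functional has density $Y\in\CM'$, then, using that the GNS unitaries $u_{s}$ fix $\Omega_\rho$ together with the intertwining identity of \Cref{key lem1} (in the form $\inner{\alpha'_{s}(Y)x\Omega_\rho}{\Omega_\rho}_{\rho}=\inner{Y\alpha_{s^{-1}}(x)\Omega_\rho}{\Omega_\rho}_{\rho}$), a one-line computation shows that $A_{m}$ maps it to the functional with density $B_{m}^{\CM'}(Y):=\frac{1}{\mu(F_{m})}\int_{F_{m}}\alpha'_{s}(Y)\,d\mu(s)\in\CM'$. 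Iterating this, and using $A_{m}(\bar\nu)=\bar\nu$ (valid since $\bar\nu$ is $G$-invariant under the present hypotheses, cf.\ \Cref{invariantnu}), one obtains $\overline{A_{k}(\nu)}=\bar\nu$, hence $\bar\omega=\bar\nu$ and
\[
A_{n}(\omega)-\bar\omega=A_{n}\bigl(\nu-A_{k}(\nu)\bigr),
\]
a functional whose density is $S_{n}:=B_{n}^{\CM'}\bigl(T-B_{k}^{\CM'}(T)\bigr)\in\CM'$. The crucial step is then to apply \Cref{dense set-lem} to the dynamical system $(\CM',G,\alpha')$ with $a=T$: it gives $\norm{S_{n}}=\norm{B_{n}^{\CM'}\bigl(T-B_{k}^{\CM'}(T)\bigr)}\to0$ as $n\to\infty$. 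Finally, for $x\in\CM_{+}$, writing $x=(x^{1/2})^{2}$ and using that $x^{1/2}\in\CM$ commutes with $S_{n}\in\CM'$,
\[
\abs{(A_{n}(\omega)-\bar\omega)(x)}=\abs{\inner{S_{n}x^{1/2}\Omega_\rho}{x^{1/2}\Omega_\rho}_{\rho}}\le\norm{S_{n}}\,\rho(x),
\]
so that $\sup_{0\ne x\in\CM_{+}}\abs{(A_{n}(\omega)-\bar\omega)(x)}/\rho(x)\le\norm{S_{n}}\to0$. Extending by linearity to $\CW=\CW_{1}-\CW_{1}$ finishes part (ii).

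For part (i), I would first prove that $\CD:=\operatorname{span}\{\nu\in\CM_{*+}:\nu\le\lambda\rho\text{ for some }\lambda>0\}$ is norm-dense in $\CM_{*s}$. Given $\psi\in\CM_{*+}$, write $\psi$ as a vector functional $\psi(x)=\inner{x\xi}{\xi}_{\rho}$, $\xi\in L^{2}(\CM,\rho)$, and approximate $\xi$ in $\norm{\cdot}_{\rho}$ by vectors $y'_{j}\Omega_\rho$ with $y'_{j}\in\CM'$ (cyclicity of $\Omega_\rho$ for $\CM'$); then $\psi_{j}(x):=\inner{xy'_{j}\Omega_\rho}{y'_{j}\Omega_\rho}_{\rho}=\inner{x(y'_{j})^{*}y'_{j}\Omega_\rho}{\Omega_\rho}_{\rho}\le\norm{y'_{j}}^{2}\rho(x)$ for $x\in\CM_{+}$ (the positive operators $x$ and $(y'_{j})^{*}y'_{j}$ commute), so $\psi_{j}\in\CD$ and $\psi_{j}\to\psi$ in norm; combined with the Jordan decomposition this gives density of $\CD$. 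Put $L_{k}(\phi):=\phi-A_{k}(\phi)+\bar\phi$; these operators are uniformly bounded on $\CM_{*s}$ and $\CW\supseteq\{L_{k}(\phi):\phi\in\CD,\ k\in\N\}$ (split $\phi=\nu_{1}-\nu_{2}$ with $\nu_{i}\le\lambda_{i}\rho$), so by density of $\CD$ it suffices to prove that $\{L_{k}(\phi):\phi\in\CM_{*s},\ k\in\N\}$ is dense in $\CM_{*s}$. Decomposing $\phi=\bar\phi+\phi_{0}$ with $\phi_{0}:=\phi-\bar\phi$, one has $\overline{\phi_{0}}=0$ and $L_{k}(\phi)=\bar\phi+\bigl(\phi_{0}-A_{k}(\phi_{0})\bigr)$; here $Q\colon\phi\mapsto\bar\phi$ is a norm-one idempotent on $\CM_{*s}$ (idempotent because $\bar\phi$ is $G$-invariant) with range the $G$-invariant functionals and kernel $\{\eta:\norm{\cdot}_{\CM_{*}}\text{-}\lim_{n}A_{n}(\eta)=0\}$. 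For $\eta\in\ker Q$ we have $A_{k}(\eta)\to0$ as $k\to\infty$, hence $L_{k}(f+\eta)=f+\bigl(\eta-A_{k}(\eta)\bigr)\to f+\eta$ for every $f\in\operatorname{ran}Q$; therefore the closure of $\{L_{k}(\phi)\}$ contains $\operatorname{ran}Q\oplus\ker Q=\CM_{*s}$, which is part (i).

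I expect the main obstacle to be the observation behind part (ii): that one should control the density $S_{n}$ \emph{as an element of $\CM'$ in the operator norm}, and that its decay is exactly a F\o lner-averaging (mean-ergodic) phenomenon, captured by \Cref{dense set-lem} applied inside the commutant. Working in $L^{2}(\CM,\rho)$ instead would only yield $\norm{S_{n}\Omega_\rho}_{\rho}\to0$, which is too weak to bound $\sup_{x\in\CM_{+}}\abs{\cdot}/\rho(x)$; and the $\CM_{*}$-norm convergence $A_{n}(\nu)\to\bar\nu$ of \Cref{mean erg thm for state} does not suffice either, because the seminorm $\psi\mapsto\sup_{x\in\CM_{+}}\abs{\psi(x)}/\rho(x)$ strictly dominates $\norm{\cdot}_{\CM_{*}}$. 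The remaining verifications---that $(\CM',G,\alpha')$ is a dynamical system to which \Cref{dense set-lem} applies (the continuity of $s\mapsto\alpha'_{s}(y')$ follows from that of $s\mapsto\alpha_{s}$ and the invariance of $\rho$), the two density computations, and the vector-functional approximation---are routine.
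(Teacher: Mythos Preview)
Your proposal is correct and follows essentially the same route as the paper: for (ii) you both pass to the commutant via \Cref{key lem1}, identify the density $T\in\CM'$ of $\nu$, rewrite $A_n(\omega)-\bar\omega$ as the functional with density $B_n^{\CM'}(T-B_k^{\CM'}(T))$, and then invoke \Cref{dense set-lem} on $(\CM',G,\alpha')$ to get operator-norm decay; for (i) you both combine the density of $\{\nu\le\lambda\rho\}$ with the mean convergence $A_k(\nu)\to\bar\nu$, though the paper does this in one line while you phrase it through the ergodic projection $Q$ and the uniform boundedness of $L_k$. Your explicit emphasis that the bound in (ii) requires the \emph{operator} norm of $S_n$ (not merely $\norm{S_n\Omega_\rho}_\rho$) is well placed---the paper's writeup has $\norm{B_n(y_1'-B_k(y_1'))}_\rho$ at that step, which should be read as the operator norm for the inequality $\abs{\inner{Sx\Omega_\rho}{\Omega_\rho}}\le\norm{S}\rho(x)$ to hold.
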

\begin{proof}
	\emph{(i):} The proof follows from a similar argument as in the proof of \cite[Lemma 4.9, Part (i)]{Bik-Dip-neveu}. Let $\mu\in M_{*+}$ and $\varepsilon>0$. By lemma 3.19 from \cite{hiai2021lectures}, we find a $\nu\in M_{*+}$ with $\nu\leq \lambda\rho$ for some $\lambda>0$, such that $\|\mu-\nu\|<\varepsilon/2$. Further, by \ref{mean erg thm for state} we know that $A_n(\nu)$ is convergent, and write $\bar{\nu}=\lim_{n\to\infty} A_n(\nu)$.
So there exists an $n_0\in\mathbb{N}$ such that
\[
\|\bar{\nu}-A_{n_0}(\nu)\|\leq \varepsilon/2.
\]
Therefore by the triangle inequality, we have
\[
\|\mu-(\nu-A_{n_0}(\nu)-\bar{\nu})\|
\leq \|\mu-\nu\|+\|A_{n_0}(\nu)-\bar{\nu}\|
<\varepsilon.
\]

\noindent Now for $\mu\in M_{*s}$, we write $\mu=\mu_{+}-\mu_{-}$, where
$\mu_{+},\mu_{-}$ are normal positive linear functionals. Thus, it follows
that $\mathcal{W}$ is dense in $M_{*s}$.
	
	\emph{(ii):} Fix $k \in \N$ and consider $\nu_k := \nu - A_{k}(\nu ) + \bar{\nu}$. It suffices to prove Eq. \ref{maximal type} for $\nu_k$. 
	
	Now, since 
	\begin{align*}
		A_{n}(\nu_k) - \bar{\nu} = A_{n}(\nu - A_{k}(\nu))
	\end{align*}
	for all $n \in \N$, it follows from Lemma \ref{dense set-lem} that $\overline{\nu_k}= \bar{\nu}$. 
	
	Since $\nu \leq \lambda \rho $, by \cite[Lemma 4.4]{Bik-Dip-neveu}, there exists a unique $y_1' \in \CM_+'$ with $y_1' \leq \lambda$ such that 
	\begin{align*}
		\nu(x)= \inner{y_1' x \Omega_{\rho}}{ \Omega_{\rho}}_{\rho} \quad \text{for all } x \in \CM.
	\end{align*}

	For $y' \in \CM'$, we write 
	\[
	B_{n}(y') := \frac{1}{\mu(F_{n})}\int_{F_{n}} \alpha_{s}'(y') d \mu(s).
	\]
	By Lemma \ref{key lem1}, we obtain
	\begin{align*}
		\inner{B_{n}(y') x \Omega_{\rho}}{ \Omega_{\rho}}_{\rho} = \inner{y' A_{n}(x) \Omega_{\rho}}{ \Omega_{\rho}}_{\rho}, \quad \text{for all } x \in \CM,  y' \in \CM',
	\end{align*}
	here 
	\[
	A_{n}(x):= \frac{1}{\mu(F_{n})}\int_{F_{n}} \alpha_{s^{-1}}(x) d \mu(s).
	\] 
	
	\noindent For all $n \in \N$ and $x \in \CM_+$, we obtain 
	\begin{align*}
		\abs{(A_{n}(\nu_k) - \bar{\nu})(x)}
		&= \abs{(\nu_k - \bar{\nu})(A_{n}(x))} \\
		&= \abs{(\nu - A_{k}(\nu))(A_{n}(x))}  \quad \text{(since } \nu_k := \nu - A_{k}(\nu ) + \bar{\nu}) \\
		&= \abs{\nu(A_{n}(x)) - \nu(A_{k}(A_{n}(x)))}  \quad \text{(as } B_{k}(\nu)(\cdot) = \nu(B_{k}(\cdot)))\\
		&= \abs{\inner{y_1' A_{n}(x) \Omega_{\rho} }{\Omega_{\rho}}_{\rho} - \inner{y_1' A_{k}(A_{n}(x))\Omega_{\rho}}{\Omega_{\rho}}_{\rho}} \\
		&\quad \text{(since } \nu(\cdot) = \inner{y_1' (\cdot)\Omega_{\rho}}{ \Omega_{\rho}}_{\rho}) \\
		&= \abs{\inner{(y_1'- B_{k}(y_1'))A_{n}(x) \Omega_{\rho}}{\Omega_{\rho}}_{\rho}} \\
		&= \abs{\inner{B_{n}(y_1'- B_{k}(y_1'))x \Omega_{\rho}}{\Omega_{\rho}}_{\rho}}, \text{ (by Lemma } \ref{key lem1}) \\
		&\leq \norm{B_{n}(y_1'- B_{k}(y_1'))}_{\rho} \rho(x).
	\end{align*}
	
	\noindent Moreover, since Lemma \ref{dense set-lem} ensures that $\lim_{n \to \infty} \norm{B_{n}(y_1'- B_{k}(y_1'))}_{\rho}=0$, we conclude that
	\begin{align*}
		\lim_{n \to \infty} \sup_{x \in \CM_+, x \neq 0} \frac{\abs{(A_{n}(\nu_k)- \bar{\nu}_k)(x)}}{\rho(x)}
		&= \lim_{n \to \infty} \sup_{x \in \CM_+, x \neq 0} \frac{\abs{(A_{n}(\nu_k)- \bar{\nu})(x)}}{\rho(x)} \\
		&= 0.
	\end{align*}
	This completes the proof.		
\end{proof}

\begin{rem}\label{dense set-rem1}
\begin{enumerate}
	\item From Proposition \ref{dense set}, we begin by observing that for every $p \in \CP(\CM)$ and $\nu \in \CW$, one has
	\begin{align*}
		\lim_{n \to \infty} 
		\sup_{\substack{x \in p\CM_+p \\ x \neq 0}}
		\frac{\abs{(A_n(\nu)-\bar{\nu})(x)}}{\rho(x)} = 0 .
	\end{align*}

	\item Let $\CM$ be a von Neumann algebra equipped with a faithful normal tracial state $\tau$, and let $\nu \in \CW$. Since $\rho \in \CM_{*+}$, there exists a unique element $X \in L^1(\CM,\tau)_+$ such that
	\[
		\rho(x) = \tau(Xx), \qquad \text{for all } x \in \CM .
	\]
	
	For each $s > 0$, define the projection $q_s := \chi_{(1/s,\, s)}(X) \in \CM$. Note that
	\[
		\tau(1 - q_s) \xrightarrow[s \to \infty]{} 0 .
	\]
	Consequently, for any $\varepsilon > 0$, there exists $s_0 > 0$ such that $\tau(1 - q_{s_0}) < \varepsilon$. Moreover, we have the operator inequality $X q_{s_0} \leq s_0 q_{s_0}$. Hence, for every nonzero element $x \in q_{s_0}\CM_+ q_{s_0}$,
	\begin{align*}
		\frac{\rho(x)}{\tau(x)} 
		&= \frac{\tau(Xx)}{\tau(x)} 
		= \frac{\tau(X q_{s_0} x)}{\tau(x)} \\
		&\leq \frac{\tau(s_0 x)}{\tau(x)} 
		= s_0 .
	\end{align*}
	
	\noindent It therefore follows from Proposition~\ref{dense set} that
	\begin{align*}
		\lim_{n \to \infty} 
		\sup_{\substack{x \in p\CM_+p \\ x \neq 0}}
		\frac{\abs{(A_n(\nu)-\bar{\nu})(x)}}{\tau(x)}
		&\leq s_0
		\lim_{n \to \infty} 
		\sup_{\substack{x \in p\CM_+p \\ x \neq 0}}
		\frac{\abs{(A_n(\nu)-\bar{\nu})(x)}}{\rho(x)} \\
		&= 0 .
	\end{align*}
\end{enumerate}
\end{rem}


\noindent Throughout the remainder of this section, we assume that the von Neumann algebra $\CM$ is endowed with a faithful normal tracial state $\tau$.

\noindent The next result establishes a suitable Banach principle, which plays a central role in deriving ergodic convergence results. To this end, we first introduce the notion of bilateral almost uniform (b.a.u.) convergence for sequences in the predual $\CM_*$.

\begin{defn}\label{bau conv- functionals}
	Let $(\nu_n)_{n \in \N}$ be a sequence in $\CM_*$ and let $\nu \in \CM_*$. We say that $\nu_n$ converges to $\nu$ in the bilateral almost uniform (b.a.u.) topology if, for every $\varepsilon > 0$, there exists a projection $p \in \CP(\CM)$ with $\tau(1-p) < \varepsilon$ such that
	\begin{align*}
		\lim_{n \to \infty} 
		\sup_{\substack{x \in p\CM_+p \\ x \neq 0}}
		\frac{\abs{(\nu_n - \nu)(x)}}{\tau(x)} = 0 .
	\end{align*}
\end{defn}

\begin{rem}\label{equiv of bau convg}
	Let $(\nu_n)_{n \in \N} \subset \CM_*$ and $\nu \in \CM_*$. By Proposition~\ref{predual prop}, there exist a sequence $(X_n)_{n \in \N}$ and an element $X \in L^1(\CM,\tau)$ such that $\nu_n(x) = \tau(X_n x)$ and $\nu(x) = \tau(Xx)$ for all $x \in \CM$.

	We claim that $(\nu_n)$ converges to $\nu$ in the b.a.u. topology, in the sense of Definition~\ref{bau conv- functionals}, if and only if the corresponding sequence $(X_n)$ converges to $X$ bilaterally almost uniformly in $L^1(\CM,\tau)$, as in Definition~\ref{bau conv defn}. Indeed, assume first that $\nu_n \to \nu$ in b.a.u. Then, for every $\varepsilon > 0$, there exists a projection $e \in \CP(\CM)$ with $\tau(1-e) < \varepsilon$ such that for every $\delta > 0$, one can find $n_0 \in \N$ satisfying, for all $n \geq n_0$,
	\begin{align*}
		\abs{(\nu_n - \nu)(exe)} 
		&\leq \delta \, \tau(exe), \qquad \forall x \in \CM_+ \setminus \{0\}.
	\end{align*}
	Using the trace representation, this inequality is equivalent to
	\begin{align*}
		\abs{\tau\!\left((X_n - X)(exe)\right)}
		&\leq \delta \, \tau(exe), \qquad \forall x \in \CM_+ \setminus \{0\},
	\end{align*}
	which, by the faithfulness of $\tau$, is in turn equivalent to
	\[
		\norm{e(X_n - X)e} \leq \delta .
	\]
	Consequently, $X_n \to X$ in the b.a.u. sense. The converse implication follows by reversing the above arguments.
\end{rem}

\noindent The following proposition shows that the predual $\CM_*$ is complete with respect to the b.a.u. topology.

\begin{prop}\label{complete wrt bau-functionals}
	Every b.a.u.-Cauchy sequence $(\nu_n)_{n \in \N}$ in $\CM_*$ converges in the b.a.u. topology to some $\nu \in \CM_*$.
\end{prop}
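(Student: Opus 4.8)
The plan is to transport the statement to the non-commutative $L^1$-space via the canonical isometry $\CM_* \cong L^1(\CM,\tau)$ of \Cref{predual prop}, and then to invoke the b.a.u.-completeness of $L^0(\CM,\tau)$ recorded in \Cref{complete wrt bau}. First I would write $\nu_n(x) = \tau(X_n x)$ for the uniquely determined sequence $(X_n)_{n\in\N} \subseteq L^1(\CM,\tau)$ furnished by \Cref{predual prop}. Applying the computation of \Cref{equiv of bau convg} verbatim to the differences $\nu_n-\nu_m$ — each of which again lies in $\CM_*$ and is represented by $X_n-X_m \in L^1(\CM,\tau)$ — one checks that the hypothesis that $(\nu_n)$ is b.a.u.-Cauchy is equivalent to the following: for every $\eps>0$ there is a projection $e \in \CP(\CM)$ with $\tau(1-e) < \eps$ such that $\lim_{n,m\to\infty}\norm{e(X_n-X_m)e} = 0$. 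In other words, $(X_n)$ is a b.a.u.-Cauchy sequence in $L^0(\CM,\tau)$.

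By \Cref{complete wrt bau}, $L^0(\CM,\tau)$ is complete for the b.a.u.\ topology, so there is $X \in L^0(\CM,\tau)$ with $X_n \to X$ bilaterally almost uniformly. The next step is to verify that $X$ in fact belongs to $L^1(\CM,\tau)$, so that it corresponds to an element $\nu \in \CM_*$. For this I would use that b.a.u.\ convergence implies convergence in the measure topology, together with the (standard) lower semicontinuity of $\norm{\cdot}_1$ for that topology, which gives $\norm{X}_1 \le \liminf_n \norm{X_n}_1 = \liminf_n \norm{\nu_n} < \infty$; the finiteness here is where one invokes that the b.a.u.-Cauchy sequence $(\nu_n)$ is norm-bounded in $\CM_*$ (for the sequences relevant to the ergodic applications, e.g.\ $A_n(\nu)$, this is automatic since $\norm{A_n(\nu)} \le \norm{\nu}$). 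Once $X \in L^1(\CM,\tau)$ is known, it defines $\nu \in \CM_*$ via \Cref{predual prop}, and running the equivalence of \Cref{equiv of bau convg} in the reverse direction shows that $\nu_n \to \nu$ in the b.a.u.\ topology in the sense of \Cref{bau conv- functionals}, which completes the argument.

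The only genuinely delicate point is this last one: ensuring that the b.a.u.-limit handed back by \Cref{complete wrt bau} stays inside $L^1(\CM,\tau)$ rather than escaping into the strictly larger space $L^0(\CM,\tau)$; everything else is a direct translation through the isometry $\CM_* \cong L^1(\CM,\tau)$ combined with the cited completeness theorem and the Cauchy version of the computation in \Cref{equiv of bau convg}.
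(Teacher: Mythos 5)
Your route is the same as the paper's: the paper's entire proof of this proposition is the citation of Theorem~\ref{complete wrt bau} together with Remark~\ref{equiv of bau convg}, i.e.\ exactly your translation of the Cauchy condition through the isometry $\CM_*\cong L^1(\CM,\tau)$ followed by b.a.u.-completeness of $L^0(\CM,\tau)$. What you add is the step the paper passes over silently: the limit $X$ handed back by Theorem~\ref{complete wrt bau} lives a priori only in $L^0(\CM,\tau)$, and one must show $X\in L^1(\CM,\tau)$ before it can be read back as an element $\nu\in\CM_*$ in the sense of Definition~\ref{bau conv- functionals}. You are right that this is the only delicate point.

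Be aware, though, that your patch for that point (lower semicontinuity of $\norm{\cdot}_1$ under convergence in measure plus norm-boundedness of $(\nu_n)$) invokes a hypothesis that is not in the statement: a b.a.u.-Cauchy sequence in $\CM_*$ need not be bounded in $\norm{\cdot}_1$. So as written you prove the variant ``b.a.u.-Cauchy and norm-bounded $\Rightarrow$ b.a.u.-convergent in $\CM_*$'' rather than the proposition verbatim; and in fact the unrestricted statement is genuinely problematic. Take $\CM=L^\infty[0,1]$ with $\tau$ given by Lebesgue integration, and $\nu_n(x)=\tau(X_n x)$ with $X_n(t)=t^{-1}\chi_{[1/n,1]}(t)$. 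For any $\eps>0$ the projection $e=\chi_{[\delta,1]}$ with $\delta<\eps$ satisfies $e(X_n-X_m)e=0$ for all $n,m>1/\delta$, so $(\nu_n)$ is b.a.u.-Cauchy; but its b.a.u.\ limit in $L^0$ is $t\mapsto t^{-1}\notin L^1[0,1]$, and since b.a.u.\ convergence implies convergence in the (Hausdorff) measure topology, no limit in $\CM_*$ can exist, while $\norm{X_n}_1=\log n$ is unbounded. So your cautious formulation is the one that is actually true, and it is all that the paper needs: in the sole application, Theorem~\ref{Banach principle}, the sequence $(A_n(\phi))$ satisfies $\norm{A_n(\phi)}\le\norm{\phi}$ because each $\alpha_s$ is a $*$-automorphism, so its predual action is isometric on $\CM_*$. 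The clean fix is to add norm-boundedness (or $\norm{\cdot}_1$-Cauchyness of a subsequence, or any Fatou-type hypothesis) to the statement of the proposition; with that amendment your argument is complete and correct, whereas the paper's two-line proof leaves the $L^1$-membership of the limit unaddressed.
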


\begin{proof}
	The assertion follows directly from Theorem~\ref{complete wrt bau} together with Remark~\ref{equiv of bau convg}.
\end{proof}


\noindent We now prove the Banach principle.
    	\begin{thm}[Banach principle]\label{Banach principle}
		Suppose $G$ is a \textit{l.c.s.c.} amenable group with admissible F{\o}lner sequence $(F_{n})_{n \in \mathbb{N}}$, and let $\CM$ be a finite von Neumann algebra equipped with a faithful normal tracial state $\tau$ and a faithful normal state $\rho$. 
		
		Let $(\CM, G, \alpha)$ be a non-commutative dynamical system such that $\rho \circ \alpha_g = \rho$ for all $g \in G$, i.e., $(\CM, G, \alpha, \rho)$ forms a kernel. Then, the following set 
		\begin{align*}
			\CC = \{ \phi \in \CM_{*s} \mid A_n(\phi) \text{ converges in b.a.u.} \}
		\end{align*}
		is closed in $\CM_{*s}$.
	\end{thm}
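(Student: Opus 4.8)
The plan is to show that $\CC$ contains every limit point of itself. Take $\phi\in\CM_{*s}$ together with a sequence $(\phi^{(k)})_k\subseteq\CC$ with $\|\phi^{(k)}-\phi\|\to 0$; I want to conclude $\phi\in\CC$. By Proposition~\ref{complete wrt bau-functionals} it suffices to prove that $(A_n(\phi))_n$ is b.a.u.-Cauchy (its b.a.u.-limit is then forced to coincide with the norm-limit $\bar\phi=\|\cdot\|\text{-}\lim_n A_n(\phi)$ supplied unconditionally by Theorem~\ref{mean erg thm for state}). Setting $\psi_k:=\phi-\phi^{(k)}\in\CM_{*s}$, the basic estimate, valid on $p\CM_+p$ for any projection $p\in\CP(\CM)$, is
\[
\frac{\bigl|\bigl(A_n(\phi)-A_{n'}(\phi)\bigr)(x)\bigr|}{\tau(x)}
\le
\frac{\bigl|\bigl(A_n(\phi^{(k)})-A_{n'}(\phi^{(k)})\bigr)(x)\bigr|}{\tau(x)}
+\frac{|A_n(\psi_k)(x)|}{\tau(x)}
+\frac{|A_{n'}(\psi_k)(x)|}{\tau(x)},
\]
in which the first summand tends to $0$ for large $n,n'$ whenever $p$ is below the good projection furnished by $\phi^{(k)}\in\CC$, and the last two are to be handled by the maximal inequality.

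To feed this into Corollary~\ref{maximal ineq for averages-cor} I would first pass to a subsequence so that, using $s_j\to\infty$ (Remark~\ref{trace-proj}), $\|\psi_j\|\le 4^{-s_j}$ with the norms decaying fast enough that all projection defects and error constants below are summable, and then apply the corollary to the sequence $(\psi_j)_j$ — at the $k$-th scale with an auxiliary parameter $\upsilon_k$ chosen summable — obtaining, for each truncation level $N$, projections of $\rho$-defect at most $(1+\upsilon_k)\gamma_j(N)+\upsilon_k$ on whose compression $|A_m(\psi_j)(x)|\le (cs_j/2^{s_j})\,\rho(x)$ for all $m\le N$. Since $\gamma_j(N)\to 0$ uniformly in $N$ as $j\to\infty$ and $cs_j/2^{s_j}\to 0$, both quantities can be made arbitrarily small. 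I would then build a single projection $p$ with $\tau(1-p)<\varepsilon$ by intersecting three families: the projections $e_k$ of small $\tau$-defect coming from $\phi^{(k)}\in\CC$; the maximal-inequality projections $g_k$, truncated at a rapidly increasing sequence $N_k\to\infty$; and one cut-off projection $q_{s_0}$ from Remark~\ref{trace-proj} (cf. Remark~\ref{dense set-rem1}(2)), on whose compression $\rho\le s_0\tau$, which converts the $\rho$-estimates of the maximal inequality into the $\tau$-estimates required by Definition~\ref{bau conv- functionals}. On $p\CM_+p$ one then closes the three-term estimate: given a target $\delta$, first pick $k$ with $2s_0\,cs_k/2^{s_k}<\delta/2$, then a threshold past which the $\phi^{(k)}$-term is $\le(\delta/2)\tau(x)$, and conclude that the ratio is $\le\delta$ once $n,n'$ are large — the precise order of quantifiers being exactly what the diagonalisation below must legislate.

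The delicate point, and the reason a single approximation does not suffice, is that Corollary~\ref{maximal ineq for averages-cor} controls $A_m(\psi_k)$ only for $m\le N_k$, with a projection depending on $N_k$, so no fixed compression governs \emph{all} averages of a fixed $\psi_k$; the subsequence $(\phi^{(k)})$, the truncation levels $N_k$, the approximation errors $\|\psi_k\|$, and the parameters $\upsilon_k$ must be selected jointly so that, as $n,n'\to\infty$, one can always pass to a scale $k$ that is simultaneously accurate enough ($cs_k/2^{s_k}$ negligible) and deep enough ($N_k\ge\max(n,n')$) to absorb the tail, all while the defects $\tau(1-e_k),\ \rho(1-g_k),\ \upsilon_k$ stay summable and the $\rho$-versus-$\tau$ mismatch is routed through the $q_s$. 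I expect this bookkeeping to be the main obstacle; once it is arranged, the conclusion follows by letting first $n,n'\to\infty$ and then $\delta\to 0$, which gives the b.a.u.-Cauchy property, hence $\phi\in\CC$ and $\CC$ is closed.
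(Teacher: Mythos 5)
Your outline follows essentially the same route as the paper's proof: reduce to showing $(A_n(\phi))_n$ is b.a.u.-Cauchy and invoke Proposition~\ref{complete wrt bau-functionals}; use the three-term splitting; pass to a subsequence with $\norm{\phi-\phi_j}\le 4^{-s_j}$ and feed it into Corollary~\ref{maximal ineq for averages-cor} with summable parameters; and interlace rapidly increasing truncation levels with the Cauchy thresholds of the approximants so that for $N_{j_l}\le n,m\le N_{j_{l+1}}$ one works at scale $j_l$ --- this is exactly the paper's choice $q_l:=q_{N_{j_{l+1}},j_l}$, and the final estimate is of the form $s_0\,c s_{j_l}/2^{s_{j_l}}+2^{-j_l}\to 0$. (Incidentally, you do not need Theorem~\ref{mean erg thm for state} to identify the limit: membership in $\CC$ only asks for b.a.u.\ convergence to some element, which completeness already provides.)

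There is, however, one genuine gap, at the point you summarise as ``the defects $\tau(1-e_k),\ \rho(1-g_k),\ \ups_k$ stay summable and the $\rho$-versus-$\tau$ mismatch is routed through the $q_s$.'' Corollary~\ref{maximal ineq for averages-cor} controls only $\rho(1-p^{\ups}_{n,j})$, whereas Definition~\ref{bau conv- functionals} and your claim $\tau(1-p)<\varepsilon$ require $\tau$-control of these projections. The cut-off $q_{s_0}$ cannot supply this: the bound $\rho\le s_0\tau$ on $q_{s_0}\CM_+q_{s_0}$ converts the \emph{ratio} estimates $\lvert A_m(\psi_j)(x)\rvert\le (cs_j/2^{s_j})\rho(x)$ into $\tau$-estimates, but it does not convert projection defects. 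Indeed, $\rho(1-g)$ small does not force $\tau(1-g)$ small (the density $d\rho/d\tau$ may be tiny on a part of the algebra of large trace), and even though the two-sided spectral bounds on the $q_{s_0}$-corner give $\tau\bigl(q_{s_0}(1-g_k)q_{s_0}\bigr)\le s_0\,\rho(1-g_k)$, smallness of this compression does not bound $\tau\bigl(q_{s_0}-q_{s_0}\wedge g_k\bigr)$: two projections at a small angle have small-trace compression of the complement yet trivial intersection, so intersecting with $q_{s_0}$ does not rescue $\tau(1-p)<\varepsilon$. The paper closes precisely this hole with Lemma~\ref{uniform conv lem}, transferring $\lim_{j}\gamma_j(N)=0$ uniformly in $N$ (a statement about $\widetilde{\rho}$) into $\lim_{j}\tau(1-q_{N,j})=0$ uniformly in $N$, and only then extracting a subsequence $(j_l)$ with $\tau(1-q_{N,j_l})<\theta/2^{l+2}$ for \emph{all} $N$; the uniformity in $N$ is essential because the truncation level paired with scale $j_l$, namely $N_{j_{l+1}}$, is chosen after the subsequence, so a non-uniform (per-$N$) conversion would make your ``jointly selected'' parameters circular. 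Insert this transfer step (or an equivalent argument yielding $\tau$-defect bounds uniform in the truncation level), and the rest of your bookkeeping goes through as in the paper.
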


    	\begin{proof}
		Let $(\phi_n)$ be a sequence in $\mathcal{C}$, converging to $\phi \in \mathcal{M}_{*s}$ in norm, i.e.,
		$$ \lim_{n \to \infty} \| \phi_n - \phi \| = 0. $$
		Then, we need to show that $\phi \in \mathcal{C}$. 
		
		First, we note that since $\rho \in \mathcal{M}_{*+}$, there exists a unique $X \in L^1(\mathcal{M}, \tau)_+$ such that $\rho(x) = \tau(Xx)$ for all $x \in \mathcal{M}$. Then, for any $s > 0$, consider the projection $q_s := 1_{(1/s, s)}(X) \in \mathcal{M}$. Observe that $(1 - q_s) \xrightarrow{s \to \infty} 0$ in the strong operator topology (SOT). 
		
		Let $\theta > 0$ be arbitrary. Therefore, there exists an $s_0 > 0$ such that $\tau(1 - q_{s_0}) < \frac{\theta}{2}$. Further, this implies $X q_{s_0} \leq s_0 q_{s_0}$. Thus, for all $0 \neq x \in q_{s_0} \mathcal{M}_+ q_{s_0}$, we have 
		\begin{align}\label{rho-tau}
			\begin{split}
				\frac{\rho(x)}{\tau(x)} = \frac{\tau(Xx)}{\tau(x)}&= \frac{\tau(X q_{s_0} x)}{\tau(x)} \quad \text{ (since $q_{s_0} x = x$)}\\
				&\leq \frac{\tau(s_0 x)}{\tau(x)} = s_0.
			\end{split} 
		\end{align}
		
		Since $ \lim_{n \to \infty} \| \phi - \phi_n \| = 0 $, possibly passing through a subsequence, we can assume that 
		\begin{align*}\label{coneq-3}
			\|\phi - \phi_j\| < \frac{1}{4^{s_j}} \text{ for all } j \in \mathbb{N}.
		\end{align*}
		
		Furthermore, note that $\phi - \phi_j \in \mathcal{M}_{*s}$ for all $j \in \mathbb{N}$. Now, using Corollary \ref{maximal ineq for averages-cor} for $n = N$, $\upsilon = \frac{1}{2^i}$, we obtain positive reals $({\gamma_j(N)})_{N \in \mathbb{N}}$ and projections $ q_{N,j}^{i} := q^{ \frac{1}{2^i}}_{ N, j} $ in $\mathcal{M}$ for $ j , N \in \mathbb{N}$ such that:
		\begin{enumerate}\setlength\itemsep{.7em}
			\item $ \displaystyle \lim_{j \to \infty} \gamma_j(N) = 0$, uniformly in $N \in \mathbb{N}$,   
			\item $ \rho(1- q^{i}_{N,j}) \leq ( 1+ \frac{1}{2^i})\gamma_j(N) + \frac{1}{2^i} $, and
			\item for all $j \in \mathbb{N}$, 
			\[
			A_n(\phi - \phi_j)(x) \leq  \frac{cs_j}{2^{s_j}}  \rho(x) \text{ for all } x \in q^{i}_{N,j} \mathcal{M}_+ q^{i}_{N, j} \text{ and } n \in [N].
			\]
		\end{enumerate}
        Now denote $q_{N,j} = q^{j}_{N,j}$ \\
        We have,
        \begin{align*}
        \rho(1-q_{N,j}) \leq (1+ \frac{1}{2^j})\gamma_j(N) + \frac{1}{2^j}
        \end{align*} for all $N \in \mathbb{N}$.
		Now, it immediately follows that
		\[
		\displaystyle \lim_{j \to \infty}  \rho(1- q_{N, j}) = 0, \text{ uniformly in }  N \in \mathbb{N}. 
		\]
		We use Lemma \ref{uniform conv lem}. Let $\CM_{1}=L^{\infty}(G)$ and $\CM_{2}=\CM$, take $S$ to be the set of natural numbers $\mathbb{N}$, let  $x_{j}(n)=q_{j,n} \text{ and } p_{j}(n)=\chi_{F_{n}}  \forall n \in \mathbb{N}$. Note that $f_{j}(N)=\frac{p_{j}(N) \otimes 1-q_{j,N}}{\mu(F_{N})},\forall N \in \mathbb{N}$. We have,
        \begin{align*}
        &\lim_{j \rightarrow \infty}\frac{(\varphi \otimes \rho)(\chi_{F_{N}}\otimes(1-q_{N,j}))}{\varphi(\chi_{F_{N}})} = 0 \\
        \implies & \lim_{j \rightarrow \infty}\frac{(\varphi \otimes \tau)(\chi_{F_{N}}\otimes(1-q_{N,j}))}{\varphi(\chi_{F_{N})}} = 0 \text{ by Lemma 3.10 } \\
        \implies & \lim_{j \rightarrow \infty}\frac{\varphi(\chi_{F_{N}})\tau(1-q_{N,j}))}{\varphi(\chi_{F_{N}})} = 0 \\
        \implies & \lim_{j \rightarrow \infty}\tau(1-q_{N,j})=0, \text{ uniformly in N}\in \mathbb{N}.
        \end{align*}

		\noindent Therefore, there exists a subsequence $(j_l)_{l \in \mathbb{N}}$ such that  
		\begin{align*}
			\tau(1 - q_{N, j_l}) < \frac{\theta}{2^{l+2}},  \quad \text{for all } N \in \mathbb{N}.
		\end{align*}

		Since $\phi_j \in \mathcal{C}$ for all $j \in \mathbb{N}$, there exists a sequence of projections $(f_j)$ in $\mathcal{P}(\mathcal{M})$ and a strictly increasing sequence $(N_j) \subseteq \mathbb{N}$ such that  
		\begin{align*}
			\begin{split}
				(1)\quad &\tau(1 - f_j)  < \frac{\theta}{2^{j+2}},  \quad \text{for all } j \in \mathbb{N}, \\
				(2)\quad &\sup_{x \in f_j M_+ f_j, x \neq 0} \frac{\left| \big(A_n(\phi_j) - A_m(\phi_j) \big)(x) \right|}{\tau(x)} < \frac{1}{2^j},  \quad \text{for all } n, m \geq N_j.
			\end{split}
		\end{align*}
		
		\noindent Now, for all $l \in \mathbb{N}$, we define $q_l := q_{N_{j_{l+1}}, j_l}$. Then, we note that  
		\begin{align*}
			\begin{split}
				(1)\quad &\tau(1 - q_l) \leq \frac{\theta}{2^{l+2}}, \\
				(2)\quad &A_n(\phi - \phi_{j_l}) (q_l x q_l) \leq   \frac{cs_{j_{l}}}{2^{s_{j_l}}} \rho(q_l x q_l),  
				\quad \text{for all }n \in [N_{j_{l +1}}].
			\end{split}
		\end{align*}

		\noindent Moreover, for the subsequence $(j_l)$, we have  
		\begin{align*}
			\tau(1 - f_{j_l}) \leq \frac{\theta}{2^{j_l+2}}.
		\end{align*}
		
		\noindent Finally, consider the projection  
		\begin{align*}
			e= \left( \underset{l \in \mathbb{N}}{\bigwedge} q_l \right) \bigwedge \left( \underset{l \in \mathbb{N}}{\bigwedge} f_{j_l} \right) \bigwedge q_{s_0}.
		\end{align*}
		Then, we have $\tau(1 - e) < \theta$ and, for all $0 \neq x \in e \mathcal{M}_+ e$, assuming $N_{j_l} \leq n, m \leq N_{j_{l+1}}$, we obtain  
		\begin{align*}
			&\frac{\left| \big(A_n( \phi) - A_m(\phi) \big) (x) \right|}{\tau(x)}\\
			=&  \frac{\left| \big(A_n( \phi) - A_n(\phi_{j_l}) + A_n(\phi_{j_l}) - A_m(\phi_{j_l}) + A_m(\phi_{j_l}) - A_m(\phi) \big) (x) \right|}{\tau(x)}\\
			\leq &  \frac{\left| \big(A_n( \phi) - A_n(\phi_{j_l}) \big)(x)  \right| }{\tau(x) }
			+ \frac{\left| \big(A_n(\phi_{j_l}) - A_m(\phi_{j_l}) \big) (x) \right|}{\tau(x)} 
			+ \frac{\left| \big( A_m(\phi_{j_l}) - A_m(\phi) \big) (x) \right|}{\tau(x)}\\
			\leq &  \frac{\left| \big(A_n( \phi - \phi_{j_l}) \big)(x)  \right| }{\tau(x) }
			+ \frac{\left| \big(A_n(\phi_{j_l}) - A_m(\phi_{j_l}) \big) (x) \right|}{\tau(x)} 
			+ \frac{\left| \big( A_m(\phi_{j_l} - \phi) \big) (x) \right|}{\tau(x)}\\
			\leq &  \left(\frac{\left| \big(A_n( \phi - \phi_{j_l}) \big)(x)  \right| }{\rho(x) } 
			+ \frac{\left| \big( A_m(\phi_{j_l} - \phi) \big) (x) \right|}{\rho(x)} \right) 
			\frac{\rho(x)}{\tau(x)} 
			+ \frac{\left| \big(A_n(\phi_{j_l}) - A_m(\phi_{j_l}) \big) (x) \right|}{\tau(x)}\\
			\leq&  \left(\frac{2cs_{j_{l}}}{2^{s_{j_{l}}}}\right)s_{0} + \frac{1}{2^{s_{j_{l}}}} = \frac{(2cs_{j_{l}}s_{0} +1)} {2^{s_{j_{l}}}}, 
			\quad (\text{by Eq. } \ref{rho-tau}). 
		\end{align*}	
		
		\noindent Since $j_l \to \infty$ as $m,n \to \infty$, it follows that  
		\begin{align*}
			\lim_{m,n \to \infty} \sup_{x \in e M_+ e, x \neq 0} \frac{\left| \big(A_n( \phi) - A_m(\phi) \big) (x) \right|}{\tau(x)} = 0.
		\end{align*}
		
		\noindent Therefore, $(A_n(\phi))_{n \in \mathbb{N}}$ is Cauchy in b.a.u. Thus, by Proposition \ref{complete wrt bau-functionals}, it follows that $(A_n(\phi))_{n \in \mathbb{N}}$ is convergent in b.a.u., which implies that the set $\mathcal{C}$ is closed in $\mathcal{M}_{*s}$.		
	\end{proof}
\noindent The following results require that $G$ is unimodular, due to our use of proposition \ref{dense set}(ii).We conclude with the following ergodic theorem.
\begin{thm}\label{erg thm-1}
	Let $(\mathcal{M}, G, \alpha, \rho)$ be a kernel, and let $\tau$ be a faithful normal tracial state on $\mathcal{M}$. For any $\phi \in \mathcal{M}_*$, there exists an element $\bar{\phi} \in \mathcal{M}_*$ such that, for every $\varepsilon > 0$, one can find a projection $e \in \mathcal{M}$ satisfying $\tau(1-e) < \varepsilon$ and
	\begin{align*}
		\lim_{n \to \infty}
		\sup_{\substack{x \in e\mathcal{M}_+e \\ x \neq 0}}
		\frac{\abs{(A_n(\phi) - \bar{\phi})(x)}}{\tau(x)} = 0 .
	\end{align*}
\end{thm}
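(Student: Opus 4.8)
The strategy is the classical Banach-principle route: combine a maximal inequality (here encoded in the Banach principle, Theorem~\ref{Banach principle}) with pointwise convergence on a dense subset (Proposition~\ref{dense set}). First I would reduce to the self-adjoint case: for general $\phi \in \CM_*$, write $\phi = \phi_1 + i\phi_2$ with $\phi_1, \phi_2 \in \CM_{*s}$, so it suffices to prove the statement for $\phi \in \CM_{*s}$, and then take $\bar\phi := \bar\phi_1 + i\bar\phi_2$. Next, recall from Theorem~\ref{Banach principle} that the set
\[
\CC = \{ \phi \in \CM_{*s} : A_n(\phi) \text{ converges in b.a.u.} \}
\]
is closed in $\CM_{*s}$. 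So the whole theorem will follow once I show $\CC = \CM_{*s}$, and for that it is enough to check that $\CC$ contains a dense subset of $\CM_{*s}$.

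The dense subset is $\CW = \CW_1 - \CW_1$ from Proposition~\ref{dense set}, which is dense in $\CM_{*s}$ by part~(i). For $\nu \in \CW$, part~(ii) of Proposition~\ref{dense set} gives
\[
\lim_{n \to \infty} \sup_{x \in \CM_+,\, x \neq 0} \frac{\abs{(A_n(\nu) - \bar\nu)(x)}}{\rho(x)} = 0,
\]
i.e.\ convergence with respect to the \emph{state} $\rho$ on the full positive cone. To pass to b.a.u.\ convergence (which is phrased via the \emph{trace} $\tau$ and a corner $e\CM_+ e$), I would invoke Remark~\ref{dense set-rem1}(2): using the Radon--Nikodym operator $X \in L^1(\CM,\tau)_+$ with $\rho(\cdot) = \tau(X\cdot)$ and the spectral projections $q_s = \chi_{(1/s,s)}(X)$, one has $\tau(1-q_s) \to 0$ and $\rho(x) \le s\,\tau(x)$ for $x \in q_s \CM_+ q_s$. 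Hence for any $\varepsilon > 0$, choosing $s_0$ with $\tau(1-q_{s_0}) < \varepsilon$ and setting $e = q_{s_0}$,
\[
\sup_{\substack{x \in e\CM_+ e \\ x \neq 0}} \frac{\abs{(A_n(\nu) - \bar\nu)(x)}}{\tau(x)} \le s_0 \sup_{\substack{x \in e\CM_+ e \\ x \neq 0}} \frac{\abs{(A_n(\nu) - \bar\nu)(x)}}{\rho(x)} \xrightarrow[n\to\infty]{} 0,
\]
which is precisely b.a.u.\ convergence of $A_n(\nu)$ to $\bar\nu$ (Definition~\ref{bau conv- functionals}, with $\bar\nu$ supplied by Theorem~\ref{mean erg thm for state}). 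Thus $\CW \subseteq \CC$, and since $\CW$ is dense and $\CC$ is closed, $\CC = \CM_{*s}$.

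Finally I would assemble the conclusion: given arbitrary $\phi \in \CM_*$, decompose into self-adjoint parts, apply the above to each to get $\bar\phi_1, \bar\phi_2$ and, for a given $\varepsilon$, projections $e_1, e_2$ with $\tau(1-e_i) < \varepsilon/2$ on which $A_n(\phi_i)$ converges in the required quantitative sense; then $e := e_1 \wedge e_2$ satisfies $\tau(1-e) < \varepsilon$, and on $e\CM_+ e$ a triangle-inequality estimate gives $\sup_{x} \abs{(A_n(\phi) - \bar\phi)(x)}/\tau(x) \to 0$ with $\bar\phi = \bar\phi_1 + i\bar\phi_2$. The only genuinely substantial inputs are Theorem~\ref{Banach principle} and Proposition~\ref{dense set}, both already established; the main subtlety here is purely bookkeeping — correctly matching the $\rho$-vs-$\tau$ normalizations and the corner $e\CM_+ e$ between the b.a.u.\ definition and the dense-set estimate — so I expect no serious obstacle, just care with the Radon--Nikodym comparison and the final intersection of projections.
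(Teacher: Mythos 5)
Your proposal is correct and follows essentially the same route as the paper: the paper's proof of this theorem is exactly the combination of Theorem~\ref{Banach principle} (closedness of $\CC$), Proposition~\ref{dense set} (density of $\CW$ and the $\rho$-estimate), and Remark~\ref{dense set-rem1} (the Radon--Nikodym comparison of $\rho$ with $\tau$ on a large corner), which the paper states as an immediate consequence. The extra bookkeeping you supply (self-adjoint decomposition and intersecting projections) is exactly the kind of routine detail the paper suppresses.
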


\begin{proof}
	The assertion is an immediate consequence of Proposition~\ref{dense set} and Theorem~\ref{Banach principle}, combined with Remark~\ref{dense set-rem1}.
\end{proof}

\noindent Let $(\mathcal{M}, G, \alpha, \rho)$ be a kernel, and let $\tau$ be a faithful normal tracial state on $\mathcal{M}$. We next reformulate Theorem~\ref{erg thm-1} in the framework of the noncommutative $L^1$-space $L^1(\mathcal{M},\tau)$. For a given element $X \in L^1(\mathcal{M},\tau)$, recall from the Eq.\ref{predual trans} the associated ergodic averages
\begin{align*}
	A_n(X)
	:= \frac{1}{\mu(F_n)} \int_{F_n} \hat{\alpha}_{s^{-1}}(X) \, d\mu(s).
\end{align*}


\begin{thm}\label{erg thm-2}
	Let $(\mathcal{M}, G, \alpha, \rho)$ be a kernel, and let $X \in L^1(\mathcal{M},\tau)$. Then there exists $\bar{X} \in L^1(\mathcal{M},\tau)$ such that the sequence $(A_n(X))$ converges bilaterally almost uniformly to $\bar{X}$.
\end{thm}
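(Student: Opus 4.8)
The plan is to transfer the statement of Theorem~\ref{erg thm-1}, which is phrased in terms of the predual $\CM_*$ and the convergence of the functional averages $A_n(\phi)$ in the b.a.u.\ topology of Definition~\ref{bau conv- functionals}, into the language of $L^1(\CM,\tau)$ via the isometric identification of Theorem~\ref{predual prop}(3). First I would recall that, since $\tau$ is a faithful normal tracial state, Theorem~\ref{predual prop}(3) gives a surjective, positivity-preserving linear isometry $L^1(\CM,\tau)\ni X\mapsto \phi_X\in\CM_*$, where $\phi_X(x)=\tau(Xx)$ for $x\in\CM$. Given $X\in L^1(\CM,\tau)$, set $\phi:=\phi_X\in\CM_*$ and apply Theorem~\ref{erg thm-1} to obtain $\bar\phi\in\CM_*$ together with, for each $\varepsilon>0$, a projection $e\in\CP(\CM)$ with $\tau(1-e)<\varepsilon$ and
\begin{align*}
	\lim_{n\to\infty}\sup_{\substack{x\in e\CM_+e\\ x\neq 0}}\frac{\abs{(A_n(\phi)-\bar\phi)(x)}}{\tau(x)}=0.
\end{align*}
Let $\bar X\in L^1(\CM,\tau)$ be the unique element with $\phi_{\bar X}=\bar\phi$; this is the candidate limit.

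The second step is to check that the predual averages $A_n(\phi)$ correspond precisely to the $L^1$-averages $A_n(X)$ under the above identification, i.e.\ that $\phi_{A_n(X)}=A_n(\phi_X)$ for every $n$. This is where the duality relation \eqref{predual trans} enters: for $x\in\CM$ and $s\in G$ one has $\tau(\hat\alpha_{s^{-1}}(X)\,x)=\tau(X\,\alpha_{s^{-1}}(x))=\phi_X(\alpha_{s^{-1}}(x))$, and integrating against $\frac{1}{\mu(F_n)}\chi_{F_n}$ and using Fubini (justified by $\hat\alpha_{s^{-1}}$ being an $L^1$-contraction and the continuity of the orbit maps) yields $\tau(A_n(X)\,x)=A_n(\phi_X)(x)$, that is, $\phi_{A_n(X)}=A_n(\phi)$. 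Consequently the scalar quantity $(A_n(\phi)-\bar\phi)(x)$ equals $\tau\bigl((A_n(X)-\bar X)x\bigr)$ for all $x\in\CM$.

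The final step is purely a translation of convergence notions, which is exactly the content of Remark~\ref{equiv of bau convg}: for a sequence $(Y_n)$ in $L^1(\CM,\tau)$ and $Y\in L^1(\CM,\tau)$, the condition that for each $\varepsilon>0$ there is a projection $e$ with $\tau(1-e)<\varepsilon$ and $\lim_n\sup_{0\neq x\in e\CM_+e}\abs{\tau((Y_n-Y)x)}/\tau(x)=0$ is equivalent, by faithfulness of $\tau$, to $\lim_n\norm{e(Y_n-Y)e}=0$, i.e.\ to $Y_n\to Y$ bilaterally almost uniformly in the sense of Definition~\ref{bau conv defn}. Applying this with $Y_n=A_n(X)$ and $Y=\bar X$, the conclusion of Theorem~\ref{erg thm-1} transported through Steps~1--2 says precisely that $A_n(X)\to\bar X$ b.a.u. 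There is no genuine obstacle here; the only point requiring a line of care is the Fubini interchange in Step~2 and the observation that the projection $e$ supplied by Theorem~\ref{erg thm-1} is the same one witnessing b.a.u.\ convergence in $L^1$, both of which are routine given the results already established.
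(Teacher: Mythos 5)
Your proposal is correct and follows essentially the same route as the paper: the paper's proof of Theorem~\ref{erg thm-2} is exactly the combination of Theorem~\ref{erg thm-1} with the identification of $\CM_*$ and $L^1(\CM,\tau)$ and the translation of convergence notions in Remark~\ref{equiv of bau convg}, which you have merely spelled out (including the compatibility $\phi_{A_n(X)}=A_n(\phi_X)$ via Eq.~\eqref{predual trans}) in more detail than the paper does.
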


\begin{proof}
	The conclusion follows directly from Theorem~\ref{erg thm-1} together with Remark~\ref{equiv of bau convg}.
\end{proof}

\begin{rem}
   In view of \Cref{erg thm-2}, we observe that a non-commutative pointwise ergodic theorem, analogous to \cite[Theorem~4.17]{Bik-Dip-neveu}, can be obtained for ergodic averages taken along the F\o lner sequences associated with an unimodular amenable group action considered in the dominance result of \cite[Theorem~3.1]{chakraborty2025ergodic}.

\end{rem}

\section{\textbf{Stochastic Ergodic Theorem}}

This section brings together the results developed so far in order to derive a stochastic ergodic theorem. Throughout, we  assume that $\mathcal{M} \subseteq \mathcal{B}(\mathcal{H})$ is a von Neumann algebra endowed with a faithful normal tracial state $\tau$, and that we work in the associated noncommutative space $L^1(\mathcal{M},\tau)$. Moreover, we assume that $G$ is a unimodular amenable group admitting a symmetric admissible Følner sequence $(F_n)$.

We consider a noncommutative dynamical system $(\mathcal{M}, G, \alpha)$. As before, for each $X \in L^1(\mathcal{M},\tau)$ we define the corresponding ergodic averages by
\begin{align*}
	A_n(X)
	:= \frac{1}{\mu(F_n)} \int_{F_n} \alpha_s^*(\nu)\, d\mu(s)
	= \frac{1}{\mu(F_n)} \int_{F_n} \hat{\alpha}_{s^{-1}}(X)\, d\mu(s).
\end{align*}

Relying on the preceding results, we now establish a stochastic ergodic theorem for the averages $A_n(\cdot)$. To this end, we first recall the following decomposition theorem from \cite{Bik-Dip-neveu}.

\begin{thm}[Neveu Decomposition]\label{neveu decomposition}
	Let $(\CM, G, \alpha)$ be a noncommutative dynamical system, and let $\tau$ be a faithful normal tracial state on $\CM$. Then there exist projections $e_1, e_2 \in \CP(\CM)$ satisfying $e_1 + e_2 = 1$ and $e_1 e_2 = 0$, with the following properties:
	\begin{itemize}
		\item[(i)] There exists a $G$-invariant normal state $\rho$ on $\CM$ whose support projection is $s(\rho) = e_1$.
		\item[(ii)] There exists an element $x_0 \in \CM$ with support $s(x_0) = e_2$ such that
		\[
			\frac{1}{\mu(F_n)} \int_{F_n} \alpha_s(x_0)\, d\mu(s)
			\xrightarrow[n \to \infty]{} 0
			\quad \text{in } \norm{\cdot}.
		\]
	\end{itemize}
	Moreover, the projections $s(\rho)$ and $s(x_0)$ are uniquely determined.
\end{thm}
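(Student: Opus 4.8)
The plan is to take $e_1$ to be the support of the largest $G$-invariant normal state on $\CM$, to show that $e_2:=1-e_1$ reduces the dynamics to a corner $\CN:=e_2\CM e_2$ that carries no invariant normal state, and then to exhaust that corner by positive elements whose ergodic averages vanish in norm. Uniqueness will be immediate from a pairing argument at the end.

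First I would construct $e_1$. Let $P\subseteq\CM_{*+}$ be the family of $G$-invariant normal states (if $P=\varnothing$, set $e_1=0$, $\rho=0$, and $e_2=1$). Since $\CH$ is separable, the supremum of any family of projections in $\CM$ equals that of a countable subfamily, so $\bigvee_{\psi\in P}s(\psi)$ is the supremum of $s(\rho_j)$ for some sequence $(\rho_j)\subseteq P$; then $\rho:=\sum_j 2^{-j}\rho_j\in P$ and $e_1:=s(\rho)=\bigvee_{\psi\in P}s(\psi)$. Using the elementary identity $s(\psi\circ\beta)=\beta^{-1}(s(\psi))$ for every normal state $\psi$ and every $*$-automorphism $\beta$, the relation $\rho\circ\alpha_s=\rho$ forces $\alpha_s(e_1)=e_1$ for all $s\in G$. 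Hence $e_2=1-e_1$ is $\alpha$-invariant, $\alpha$ restricts to a noncommutative dynamical system on $\CN=e_2\CM e_2$ (with the rescaled restriction of $\tau$ as faithful normal tracial state), and $\CN$ admits \emph{no} $G$-invariant normal state: such a state, extended by $0$ to $\CM$, would be $G$-invariant, normal, and supported under $e_2$, contradicting the maximality of $e_1$ unless it vanishes. This establishes (i) and sets up (ii).

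The heart of the argument is the following claim about the dissipative corner: \emph{in $\CN$ (no $G$-invariant normal state), every nonzero projection $q\in\CP(\CN)$ dominates a nonzero $y\in\CN_+$ with $s(y)\le q$ and $\|B_n(y)\|_\infty\to0$}, where $B_n(y)=\frac1{\mu(F_n)}\int_{F_n}\alpha_s(y)\,d\mu(s)$. Granting this, a Zorn's lemma argument produces a maximal family $\{y_i\}_{i\in I}\subseteq\CN_+$ of such elements with pairwise orthogonal supports and $\|y_i\|\le1$; since $\tau$ is a finite faithful trace, $I$ is countable; maximality together with the claim forces $\sum_i s(y_i)=e_2$; and $x_0:=\sum_i 2^{-i}y_i$ then lies in $\CN_+$, has $s(x_0)=\sum_i s(y_i)=e_2$, and satisfies $\|B_n(x_0)\|_\infty\le\sum_i 2^{-i}\|B_n(y_i)\|_\infty\to0$ by dominated convergence in $i$. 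To prove the claim I would first observe that, by amenability and $w^*$-compactness of the state space of $\CN$, every $w^*$-cluster point $\omega$ of the states $x\mapsto\frac1{\mu(F_n)}\int_{F_n}\phi(\alpha_s(x))\,d\mu(s)$ (for a fixed faithful normal state $\phi$) is $G$-invariant; its normal part in Takesaki's decomposition is again $G$-invariant and normal, hence $0$, so $\omega$ is singular — this ``escape to singularity'' of the forward averages is the hook. One then converts it, through a noncommutative Hopf-type decomposition, into a ``wandering'' element $w\le q$ and a sequence $(g_k)\subseteq G$ along which the translates $\alpha_{g_k}(w)$ are almost orthogonal, and takes $y=\sum_k c_k\alpha_{g_k}(w)$ with summable weights $c_k\downarrow0$ chosen so that the windowed Følner averages of the weights tend to $0$. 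This passage — the noncommutative wandering-set construction and the bookkeeping of the weights against $(F_n)$ — is the genuine technical obstacle, and it is the content carried out in \cite{Bik-Dip-neveu}.

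Finally, uniqueness: any $G$-invariant normal state $\rho'$ has $s(\rho')\le e_1$ by maximality; conversely, if $x_0'\in\CM_+$ satisfies $\|B_n(x_0')\|_\infty\to0$ then $\rho(x_0')=\lim_n\rho(B_n(x_0'))=0$ by $G$-invariance of $\rho$, whence $s(x_0')\le1-s(\rho)=e_2$. Thus in any decomposition as in the statement one necessarily has $s(\rho)=e_1$ and $s(x_0)=e_2$, so these projections are uniquely determined.
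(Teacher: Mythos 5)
First, note that the paper does not prove this statement at all: Theorem~\ref{neveu decomposition} is imported verbatim from \cite{Bik-Dip-neveu} (``we first recall the following decomposition theorem''), so there is no in-paper argument to compare against. Judged on its own terms, your outer skeleton is correct: constructing $e_1$ as the supremum of supports of $G$-invariant normal states (attained over a countable subfamily by countable decomposability, and realized as $s(\rho)$ for a convex series $\rho$), using $s(\psi\circ\alpha_s)=\alpha_s^{-1}(s(\psi))$ to get $\alpha_s(e_1)=e_1$, exhausting the corner $e_2\CM e_2$ by countably many $y_i$ with pairwise orthogonal supports and summing with weights $2^{-i}$, and the uniqueness argument (invariance of $\rho$ plus $\|B_n(x_0')\|\to0$ gives $\rho(x_0')=0$, and normality of $\rho$ upgrades this to $\rho(s(x_0'))=0$, hence $s(x_0')\le 1-s(\rho)$) all go through.

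The genuine gap is the step you yourself flag: the claim that in the corner $\CN=e_2\CM e_2$, which admits no $G$-invariant normal state, every nonzero projection $q$ dominates a nonzero $y\in\CN_+$ with $s(y)\le q$ and $\|B_n(y)\|\to0$ along the F\o lner sequence. That claim essentially \emph{is} the theorem --- it contains all of part (ii) and the conservative/dissipative dichotomy behind it --- and what you offer for it (``every weak* cluster point of the averaged states is singular'', then a ``noncommutative Hopf-type decomposition'' yielding a wandering element with almost orthogonal translates $\alpha_{g_k}(w)$ and weights $c_k$ whose windowed F\o lner averages vanish) is a program, not an argument: passing from singularity of cluster points to a wandering operator, and arranging the weights so that the averages over an \emph{arbitrary} F\o lner sequence tend to $0$ in operator norm, is exactly the technical content of \cite{Bik-Dip-neveu}, to which you explicitly defer. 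So your proposal is a correct reduction of the statement to the main result of the paper it is quoted from --- which is, in effect, what the present paper also does by citation --- but as written it is not a self-contained proof; the missing piece is concentrated in that single claim about the dissipative corner.
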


We next record several basic consequences of this decomposition, following \cite{Bik-Dip-neveu}.

\begin{enumerate}
	\item For every $g \in G$ and $i = 1,2$, the projections $e_i$ are $\alpha$-invariant, that is, $\alpha_g(e_i) = e_i$.
	\item The restriction of $\alpha$ to the reduced von Neumann algebra $\CM_{e_i} := e_i \CM e_i$ induces an action by automorphisms. Consequently, for each $i=1,2$, the triple $(\CM_{e_i}, G, \alpha)$ forms a noncommutative dynamical system.
	\item For $i = 1,2$ and all $g \in G$, the induced predual action $\hat{\alpha}_g$ on $L^1(\CM,\tau)$ satisfies
	\[
		\hat{\alpha}_g(e_i X e_i)
		= e_i \hat{\alpha}_g(X) e_i,
		\qquad \forall X \in L^1(\CM,\tau).
	\]
	As a consequence, the ergodic averages obey
	\[
		e_i A_n(X) e_i = A_n(e_i X e_i),
		\qquad \forall X \in L^1(\CM,\tau),\ n \in \N,\ i=1,2.
	\]
\end{enumerate}

For $i=1,2$, we define the normalized traces $\tau_{e_i} := \frac{1}{\tau(e_i)}\, \tau|_{e_i \CM e_i}$. We are now in a position to state the main convergence result.

\begin{thm}\label{conv in e1-e2 corner}
	Let $(\CM, G, \alpha)$ be a noncommutative dynamical system, and let $\tau$ be a faithful normal tracial state on $\CM$. Let $e_1, e_2 \in \CM$ be the projections arising from the Neveu decomposition. Then the following assertions hold:
	\begin{enumerate}
		\item[(i)] For every $B \in L^1(\CM_{e_1}, \tau_{e_1})$, there exists $\bar{B} \in L^1(\CM_{e_1}, \tau_{e_1})$ such that $A_n(B)$ converges bilaterally almost uniformly to $\bar{B}$. In addition, $A_n(B)$ converges to $\bar{B}$ in measure.
		\item[(ii)] For every $B \in L^1(\CM_{e_2}, \tau_{e_2})$, the sequence $A_n(B)$ converges to $0$ in measure.
	\end{enumerate}
	Moreover, for any $X \in L^1(\CM,\tau)$, there exists an element $Z \in L^1(\CM,\tau)$ such that $A_n(X)$ converges to $Z$ in measure and satisfies
	\[
		e_1 Z e_1 = Z, \qquad e_2 Z e_2 = 0.
	\]
\end{thm}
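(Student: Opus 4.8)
The plan is to analyse $X$ corner by corner relative to the Neveu projections $e_1,e_2$, exploiting that these projections are $\alpha$-invariant (so that $e_iA_n(X)e_j=A_n(e_iXe_j)$, as recorded after Theorem~\ref{neveu decomposition}) and that each $\hat\alpha_s$ is an isometry of $L^1$, and then to reassemble.

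\emph{Part (i).} Since $s(\rho)=e_1$ and $\rho\circ\alpha_s=\rho$, the restriction $\rho|_{\CM_{e_1}}$ is a faithful normal $G$-invariant state on the finite von Neumann algebra $\CM_{e_1}=e_1\CM e_1$ (faithful because its support is $e_1$), while $\tau_{e_1}$ is a faithful normal tracial state on $\CM_{e_1}$ and $\alpha$ restricts to an action by $*$-automorphisms of $\CM_{e_1}$. Hence $(\CM_{e_1},G,\alpha,\rho|_{\CM_{e_1}})$ is a kernel, and Theorem~\ref{erg thm-2} (applicable since $G$ is unimodular and $(F_n)$ is a symmetric admissible F\o lner sequence) yields, for every $B\in L^1(\CM_{e_1},\tau_{e_1})$, an element $\bar B\in L^1(\CM_{e_1},\tau_{e_1})$ with $A_n(B)\to\bar B$ b.a.u.; since b.a.u.\ convergence trivially implies convergence in measure, (i) follows.

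\emph{Part (ii).} Now $\CM_{e_2}$ carries no $G$-invariant normal state, but the Neveu decomposition supplies $x_0\in(\CM_{e_2})_+$ with $s(x_0)=e_2$ and $B_n(x_0):=\frac1{\mu(F_n)}\int_{F_n}\alpha_s(x_0)\,d\mu(s)\to0$ in norm. Because each $\hat\alpha_s$ is an $L^1$-isometry, the $A_n$ are $\|\cdot\|_1$-contractions on $L^1(\CM_{e_2},\tau_{e_2})$; combining this with Chebyshev's inequality (an element of small $L^1$-norm is uniformly small in measure) and the density of $\CM_{e_2}$ in $L^1(\CM_{e_2},\tau_{e_2})$, it suffices to prove $A_n(B)\to0$ in measure for $B\in(\CM_{e_2})_+$. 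For such $B$, the spectral projections $q_\delta:=\chi_{(\delta,\infty)}(x_0)\uparrow e_2$ satisfy $q_\delta Bq_\delta\le\delta^{-1}\|B\|_\infty\,x_0$ and $\|B-q_\delta Bq_\delta\|_1\le2\|B\|_\infty\,\tau_{e_2}(e_2-q_\delta)\to0$, so by positivity of $A_n$ one has $A_n(q_\delta Bq_\delta)\le\delta^{-1}\|B\|_\infty\,A_n(x_0)$ and the whole assertion reduces to showing $A_n(x_0)\to0$ in measure. To that end fix $\eps>0$ and set $p_n:=\chi_{[\eps,\infty)}(A_n(x_0))$; the predual relation $\tau(\hat\alpha_{s^{-1}}(x_0)p_n)=\tau(x_0\,\alpha_{s^{-1}}(p_n))$, together with unimodularity and symmetry of $(F_n)$, gives
\[
\eps\,\tau(p_n)\le\tau\bigl(A_n(x_0)p_n\bigr)=\tau\bigl(x_0\,B_n(p_n)\bigr),\qquad 0\le B_n(p_n)\le1,
\]
and one concludes $\tau(p_n)\to0$ by feeding in the dissipativity $B_n(x_0)\to0$, along the lines of \cite{Bik-Dip-neveu}.

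\emph{Off-diagonal terms and assembly.} For a cross term such as $W:=e_1Xe_2$ (and symmetrically $e_2Xe_1$), first approximate $W$ in $\|\cdot\|_1$ by a bounded $w=e_1we_2\in\CM$; then $w^*w=e_2w^*we_2\in(\CM_{e_2})_+$, and an operator Cauchy--Schwarz estimate for the probability averages $A_n$ bounds $A_n(w)^*A_n(w)$ in terms of $A_n(w^*w)$, which tends to $0$ in measure by Part~(ii) (after the $q_\delta$-cutoff); hence $A_n(w)\to0$ in measure, and the $L^1$-isometry property of $\hat\alpha_s$ passes this back to $W$. Finally, writing $X=\sum_{i,j\in\{1,2\}}e_iXe_j$ and putting $Z:=\bar B$ (the limit from Part~(i), regarded as an element of $L^1(\CM,\tau)$ supported in $e_1\CM e_1$, so that $e_1Ze_1=Z$ and $e_2Ze_2=0$), we obtain $A_n(X)=\sum_{i,j}A_n(e_iXe_j)\to Z+0+0+0=Z$ in measure. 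The technical heart --- and the step I expect to cause the most trouble --- is Part~(ii) together with the cross terms: because $\alpha$ need not preserve $\tau$, the operator $\hat\alpha_s$ is not a simple conjugation on $L^1$ and the transference and maximal-inequality tools used in the kernel case (which lean on the invariant state $\rho$) are unavailable on $\CM_{e_2}$, so convergence in measure there has to be extracted directly from the dissipativity $B_n(x_0)\to0$.
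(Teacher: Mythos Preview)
Your approach is exactly the paper's: the paper's proof is a one-line citation to \cite[Section~5]{Bik-Dip-neveu}, invoking the Neveu decomposition (Theorem~\ref{neveu decomposition}) and Theorem~\ref{erg thm-2}, and you have simply unpacked that citation. Part~(i) is correct as written, and your sketch of Part~(ii) --- reducing via $L^1$-density and Chebyshev to bounded $B$, then via the spectral cutoffs $q_\delta$ of $x_0$ to $A_n(x_0)$ itself, then using the predual duality and symmetry of $(F_n)$ to pass to $B_n$ --- is the argument in \cite{Bik-Dip-neveu}.

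The one step that does not go through as you state it is the Cauchy--Schwarz bound for the off-diagonal terms. You write that $A_n(w)^*A_n(w)$ is dominated by $A_n(w^*w)$ for bounded $w=e_1we_2$, but the predual maps $\hat\alpha_s$ are \emph{not} $*$-homomorphisms of $L^1$ (as you yourself note at the end), so neither Kadison--Schwarz nor the convexity argument $\|\cdot\|^2\le$ average of $\|\cdot\|^2$ is available; indeed $\hat\alpha_s(w)$ need not even be bounded, so the product $A_n(w)^*A_n(w)$ lives only in $L^{1/2}$. The fix is to use the block positivity of $A_n(X)$ rather than a Schwarz inequality for $A_n$ itself: for $X\ge0$ one has $A_n(X)\ge0$, hence with respect to $e_1\oplus e_2$ the $2\times2$ block matrix
\[
\begin{pmatrix} e_1A_n(X)e_1 & e_1A_n(X)e_2\\ e_2A_n(X)e_1 & e_2A_n(X)e_2\end{pmatrix}
\]
is positive. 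Part~(i) supplies a projection $q\le e_1$ with $\tau(e_1-q)$ small and $\sup_n\|q\,e_1A_n(X)e_1\,q\|\le M<\infty$ (b.a.u.\ convergence gives uniform boundedness on a large corner), and Part~(ii) supplies, for each large $n$, a projection $r_n\le e_2$ with $\tau(e_2-r_n)$ small and $\|r_n\,e_2A_n(X)e_2\,r_n\|<\eps^2/M$. The elementary $2\times2$ inequality $\|qbr\|^2\le\|qaq\|\,\|rdr\|$ for positive block matrices then gives $\|q\,e_1A_n(X)e_2\,r_n\|<\eps$, which is exactly convergence to $0$ in measure of the off-diagonal block. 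General $X$ follows by decomposing into four positive pieces.
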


\begin{proof}
	Given the Neveu decomposition (Theorem~\ref{neveu decomposition}) and the b.a.u.\ convergence result of Theorem~\ref{erg thm-2}, the argument proceeds verbatim as in \cite[Section~5]{Bik-Dip-neveu}.
\end{proof}


	\noindent\textbf{Acknowledgements}: 
 P. Bikram acknowledges the support of the grant ANRF/ARGM/2025/001021/MTR, 
	Government of India.


\newcommand{\etalchar}[1]{$^{#1}$}
\providecommand{\bysame}{\leavevmode\hbox to3em{\hrulefill}\thinspace}
\providecommand{\MR}{\relax\ifhmode\unskip\space\fi MR }
\providecommand{\MRhref}[2]{%
	\href{http://www.ams.org/mathscinet-getitem?mr=#1}{#2}
}
\providecommand{\href}[2]{#2}

\end{document}